\newtheorem{theorem}{Theorem}[section]
\newtheorem{proposition}[theorem]{Proposition}
\newtheorem{lemma}[theorem]{Lemma}
\newtheorem{corollary}[theorem]{Corollary}
\theoremstyle{remark}
\newtheorem{remark}[theorem]{Remark}
\newtheorem{notation}[theorem]{Notation}
\newtheorem{observation}[theorem]{Observation}
\newtheorem{example}[theorem]{Example}
\newtheorem{definition}[theorem]{Definition}
\newcommand{\QQ}{\mathbb{Q}}
\newcommand{\FF}{\mathcal{F}}
\newcommand{\PP}{\mathbb{P}}
\newcommand{\CC}{\mathbb{C}}
\newcommand{\OO}{\mathcal{O}}
\newcommand{\ZZ}{\mathbb{Z}}
\newcommand{\bC}{{\mathbb C}}
\newcommand{\bH}{{\mathbb H}}
\newcommand{\bP}{{\mathbb P}}
\newcommand{\bQ}{{\mathbb Q}}
\newcommand{\bR}{{\mathbb R}}
\newcommand{\bZ}{{\mathbb Z}}
\newcommand{\mH}{{\mbox H}}
\newcommand{\cA}{{\mathcal A}}
\newcommand{\cD}{{\mathcal D}}
\newcommand{\cE}{{\mathcal E}}
\newcommand{\cF}{{\mathcal F}}
\newcommand{\cI}{{\mathcal I}}
\newcommand{\cM}{{\mathcal M}}
\newcommand{\cO}{{\mathcal O}}
\newcommand{\cQ}{{\mathcal Q}}
\newcommand{\ch}{\mbox{ch}}
\newcommand{\coh}{\mbox{coh}}
\newcommand{\rH}{\mbox{H}}
\newcommand{\Hom}{\mbox{Hom}}
\newcommand{\Hilb}{\mbox{Hilb}}
\newcommand{\Ext}{\mbox{Ext}}
\newcommand{\GL}{\mbox{GL}}
\newcommand{\tors}{\mbox{tors}}
\newcommand{\nt}{\noindent}
\begin{document}

\large

\title{The Minimal Model Program for the Hilbert Scheme of Points on $\PP^2$ and Bridgeland Stability}
\date{}

\author{Daniele Arcara}
\address{Department of Mathematics, Saint Vincent College, 300 Fraser Furchse Road, Latrobe, PA 15650}
\email{daniele.arcara@email.stvincent.edu}

\author{Aaron Bertram}
\address{University of Utah, Department of Mathematics, Salt Lake City, UT 84112}
\email{bertram@math.utah.edu}

\author{Izzet Coskun} 
\address{University of Illinois at
  Chicago, Department of Mathematics, Statistics and Computer Science, Chicago, IL 60607}
\email{coskun@math.uic.edu}

\author{Jack Huizenga}
\address{Harvard University, Department of Mathematics, Cambridge, MA 02138}
\email{huizenga@math.harvard.edu}

\thanks{During the preparation of this article the second author was
partially supported by the NSF grant DMS-0901128. The third author was  partially supported by the NSF grant DMS-0737581, NSF CAREER grant  0950951535, and an Arthur P. Sloan Foundation Fellowship.  The fourth author was partially supported by an NSF Graduate Research Fellowship.}

\subjclass[2000]{Primary: 14E30, 14C05, 14D20, 14D23}
\keywords{Hilbert scheme, Minimal Model Program, Bridgeland Stability Conditions,  quiver representations}

\begin{abstract}
In this paper, we study the birational geometry of the Hilbert scheme $\PP^{2[n]}$ of $n$-points on $\PP^2$. We discuss the stable base locus decomposition of the effective cone and the corresponding birational models. We give modular interpretations to the models in terms of moduli spaces of Bridgeland semi-stable objects. We construct these moduli spaces as moduli spaces of quiver representations using G.I.T. and thus show that they are projective. There is a precise correspondence between wall-crossings in the Bridgeland stability manifold and wall-crossings between Mori cones. For $n\leq 9$, we explicitly determine the walls in both interpretations and describe the corresponding flips and divisorial contractions.
\end{abstract}

\maketitle
\tableofcontents

\section{Introduction}

Let $n \geq 2$ be a positive integer. Let $\PP^{2[n]}$ denote the Hilbert scheme parameterizing zero dimensional subschemes of $\PP^2$ of length $n$. The Hilbert scheme $\PP^{2[n]}$ is a smooth, irreducible, projective variety of dimension $2n$ that contains the locus of $n$ unordered points in $\PP^2$ as a Zariski dense open subset \cite{fogarty}.  In this paper, we run the minimal model program for $\PP^{2[n]}$.  We work over the field of complex numbers $\CC$.
\medskip

The minimal model program for a parameter or moduli space $\cM$ consists of the following steps.
\begin{enumerate}
\item Determine the cones of ample and effective divisors on $\cM$ and describe the stable base locus decomposition of the effective cone.
\smallskip

\item  Assuming that the section ring is finitely generated, for every effective integral divisor $D$ on $\cM$, describe the model $$\cM(D) = \text{Proj} \left(\bigoplus_{m\geq 0} H^0(\cM, mD)\right)$$ and determine an explicit sequence of flips and divisorial contractions that relate $\cM$ to $\cM(D)$.
\smallskip

\item Finally, if possible, find a modular interpretation of $\cM(D)$.
\end{enumerate}
\medskip

Inspired by the seminal work of Birkar, Cascini, Hacon and McKernan \cite{bchm}, there has been  recent progress in understanding the minimal model program for many important moduli spaces, including the moduli space of curves (see, for example,  \cite{brendan:div}, \cite{brendan:flip}) and the Kontsevich moduli spaces of genus-zero stable maps (see, for example, \cite{stable}, \cite{mmp}). In these examples, there are three ways of obtaining different birational models of $\cM$. 
\begin{enumerate}
\item First, one may run the minimal model program on $\cM$. 
\item Second, one can vary the moduli functor. 
\item Third, since these moduli spaces are constructed by G.I.T., one can vary the linearization in the G.I.T. problem. 
\end{enumerate}
These three perspectives often produce the same models and provide three different sets of tools for understanding the geometry of $\cM$.
\medskip

In this paper, we study the birational geometry of the Hilbert scheme of points $\PP^{2[n]}$, a parameter space which plays a central role in algebraic geometry, representation theory, combinatorics, and mathematical physics (see \cite{nakajima} and \cite{gottsche1}). We discover that the birational geometry of $\PP^{2[n]}$ can also be viewed from these three perspectives.
\medskip

First, we run the minimal model program for $\PP^{2[n]}$. In Theorem \ref{dream}, we show that $\PP^{2[n]}$ is a Mori dream space. In particular, the stable base locus decomposition of $\PP^{2[n]}$ is a finite decomposition into rational polyhedral cones. Hence, in any given example one can hope to determine this decomposition completely.  In \S \ref{s-examples}, we will describe all the walls in the stable base locus decomposition of $\PP^{2[n]}$ for $n \leq 9$. By the work of Beltrametti, Sommese, G\"{o}ttsche \cite{beltrametti}, Catanese and G\"{o}ttsche \cite{catanese} and Li, Qin and Zhang \cite{li}, the ample cone of $\PP^{2[n]}$ is known. We will review the description of the ample cone of $\PP^{2[n]}$ in \S \ref{s-ample}.
\medskip

The effective cone of $\PP^{2[n]}$ is far more subtle and depends on the existence of vector bundles on $\PP^2$ satisfying interpolation.  Let $n = \frac{r(r+1)}{2}+s$ with $0 \leq s \leq r$ and assume that $\frac{s}{r}$ or $1 - \frac{s+1}{r+2}$ belongs to the set $$\Phi = \{ \alpha \ | \ \alpha > \phi^{-1} \} \cup \left\{ \frac{0}{1}, \frac{1}{2}, \frac{3}{5}, \frac{8}{13}, \frac{21}{34}, \dots \right\}, \ \ \phi = \frac{1 + \sqrt{5}}{2}, $$ where $\phi$ is the golden ratio and the fractions are ratios of consecutive Fibonacci numbers. Then, in Theorem \ref{effective},  we show that the effective cone of $\PP^{2[n]}$ is spanned by the boundary divisor $B$ parameterizing non-reduced schemes and a divisor $D_E(n)$ parameterizing subschemes that fail to impose independent conditions on sections of a Steiner bundle $E$ on $\PP^2$. The numerical conditions are needed to guarantee vanishing properties of the Steiner bundle $E$. For other $n$, we will give good bounds on the effective cone and discuss conjectures predicting the cone.
\medskip

We also introduce three families of divisors and discuss the general features of the stable base locus decomposition of $\PP^{2[n]}$. For suitable parameters, these divisors span walls of the stable base locus decomposition. As $n$ grows, the number of chambers in the decomposition grows and  the conditions defining the stable base loci become more complicated. In particular, many of the base loci consist of  loci of zero dimensional schemes of length $n$ that fail to impose independent conditions on sections of a vector bundle $E$ on $\PP^2$. Unfortunately, even when $E$ is a  line bundle $\OO_{\PP^2}(d)$, these loci are not well-understood for large $n$ and $d$. One interesting consequence of our study of the effective cone of $\PP^{2[n]}$ is a Cayley-Bacharach type theorem (Corollary \ref{cayley}) for higher rank vector bundles on $\PP^2$. 
\medskip

Second, we will vary the functor defining the Hilbert scheme.  In classical geometry, it is not at all clear how to vary the Hilbert functor. The key is to reinterpret the Hilbert scheme as a moduli space of Bridgeland semi-stable objects for a suitable Bridgeland stability condition on the heart of an appropriate t-structure on the derived category of coherent sheaves on $\PP^2$. It is then possible to vary the stability condition to obtain different moduli spaces. 
\medskip

Let $\cD^b(\operatorname{coh}(X))$ denote the bounded derived category of coherent sheaves on a smooth projective variety $X$. Bridgeland showed that the space of stability conditions on $\cD^b(\operatorname{coh}(X))$ is a complex manifold \cite{bridgeland:stable}.  We consider a complex one-dimensional slice of the stability manifold  of $\PP^2$ parameterized by an upper-half plane $s+it$, $t>0$. For each $(s,t)$ in this upper-half plane,  there is an abelian subcategory $\cA_s$ (which only depends on $s$ and not on $t$) that forms the heart of a t-structure on $\cD^b(\operatorname{coh}(\PP^2))$ and a central charge $Z_{s,t}$ such that the pair $(\cA_{s}, Z_{s,t})$ is a Bridgeland stability condition on $\PP^2$. Let $(r,c,d)$ be a fixed Chern character.  Abramovich and Polishchuk have constructed moduli stacks  $\cM_{s,t}(r,c,d)$ parameterizing Bridgeland semi-stable objects (with respect to $Z_{s,t}$) of $\cA_s$ with fixed Chern character $(r,c,d)$ \cite{abramovich}.  We show that when $s<0$ and $t$ is sufficiently large, the coarse moduli scheme of 
$\cM_{s,t}(1,0,-n)$ is isomorphic to the Hilbert scheme $\PP^{2[n]}$.
\medskip

As $t$ decreases, the moduli space $\cM_{s,t}(r,c,d)$ changes. Thus, we obtain a chamber decomposition of the $(s,t)$-plane into chambers in which the corresponding moduli spaces $\cM_{s,t}(r,c,d)$ are isomorphic. The parameters $(s,t)$ where the isomorphism class of the moduli space changes form walls called {\em Bridgeland walls}.  In \S \ref{PotentialWalls}, we determine that in the region $s<0$ and $t>0$, all the Bridgeland walls are non-intersecting, nested semi-circles with center on the real axis. 

\medskip

There is a precise correspondence between the Bridgeland walls and walls in the stable base locus decomposition. Let $x<0$ be the center of a Bridgeland wall in the $(s,t)$-plane. Let $H+ \frac{1}{2 y}B$, $y<0$,  be a divisor class spanning a wall of the stable base locus decomposition. We show that the transformation $$x = y - \frac{3}{2}$$ gives a one-to-one correspondence between the two sets of walls when $n \leq 9$ or when $x$ and $y$ are sufficiently small.  In these cases, an ideal sheaf $\cI$ lies in the stable base locus of the divisors $H + \alpha B$ for $\alpha < \frac{1}{2y}$ exactly when $\cI$ is destabilized at the Bridgeland wall with center at $x = y- \frac{3}{2}$.  One may speculate that the transformation $x= y - \frac{3}{2}$ is a one-to-one correspondence between the two sets of walls for any $n$ without any restrictions on $x$ and $y$.

\medskip

Third, in \S \ref{s-quiver} and \S \ref{s-quiver-moduli}, we will see that  the moduli spaces of Bridgeland stable objects $\cM_{s,t}(1,0,-n)$, $s<0, t>0$,  can be interpreted as moduli spaces of quiver representations and can be constructed by  Geometric Invariant Theory. In particular,  the coarse moduli schemes  of these moduli spaces are projective. There is a special region, which we call the {\em quiver region}, in the stability manifold of $\PP^2$ where the corresponding heart of the t-structure can be tilted to a category of quivers. The $(s,t)$-plane we consider intersects this region in overlapping semi-circles of radius one and center at the negative integers. Since the Bridgeland walls are all nested semi-circles  with center on the real axis, the chambers in the stability manifold all intersect the quiver region. 
Therefore, we can connect any point in the $(s,t)$-plane with $s<0$ and $t>0$ to the quiver region by a path without crossing any Bridgeland walls. We conclude that the moduli spaces of Bridgeland semi-stable objects are isomorphic to moduli spaces of quiver representations. One corollary is the finiteness of Bridgeland walls. The construction of the moduli spaces via G.I.T. also allows us to identify them with the birational models of the Hilbert scheme \cite{thaddeus}.

\medskip

The organization of this paper is as follows. In \S \ref{Hilbert-prelim}, we will recall basic facts concerning  the geometry of $\PP^{2[n]}$. In \S \ref{s-ample}, we will introduce families of effective divisors on $\PP^{2[n]}$ and recall basic facts about the ample cone of $\PP^{2[n]}$. In \S \ref{s-effective}, we will discuss the effective cone of $\PP^{2[n]}$ and the general features of the stable base locus decomposition of $\PP^{2[n]}$. In \S \ref{Bridgeland-prelim}, we will recall basic facts about Bridgeland stability conditions and introduce a complex plane worth of stability conditions that arise in our study of the birational geometry of $\PP^{2[n]}$. In \S \ref{PotentialWalls}, we study the Bridgeland walls in the stability manifold. In \S \ref{s-quiver} and \S \ref{s-quiver-moduli}, we show that in every chamber in the stability manifold, we can reach the quiver region without crossing a wall. We conclude that all the Bridgeland moduli spaces we encounter can be constructed via G.I.T. and are projective. In \S \ref{sec-actual}, we derive some useful inequalities satisfied by objects on a Bridgeland wall in the stability manifold of $\PP^2$.  Finally, in \S \ref{s-examples}, we determine the stable base locus decomposition and all the Bridgeland walls in the stability manifold for $\PP^{2[n]}$ explicitly when $n \leq 9$. 
\medskip

\noindent{\bf Acknowledgements:} We would like to thank Arend Bayer, Tom Bridgeland, Dawei Chen, Lawrence Ein, Joe Harris, Emanuele Macr{\'i}, Mihnea Popa, Artie Prendergast-Smith, and Jason Starr for many enlightening discussions.

\section{Preliminaries on the Hilbert scheme of points}\label{Hilbert-prelim}

In this section, we recall some basic facts concerning the geometry of the Hilbert scheme of points on $\PP^2$. We refer the reader to \cite{fogarty}, \cite{fogarty2} and \cite{gottsche1} for a more detailed discussion.

\begin{notation}
Let $n \geq 2$ be an integer. Let $\PP^{2[n]}$ denote the Hilbert scheme parametrizing subschemes of $\PP^2$ with  constant Hilbert polynomial $n$. Let $\PP^{2(n)}$ denote the symmetric $n$-th power of $\PP^2$ parameterizing unordered $n$-tuples of points on $\PP^2$. The symmetric product $\PP^{2(n)}$ is the quotient of the product $\PP^2 \times \cdots \times \PP^2$ of $n$ copies of $\PP^2$ under the symmetric group action $\mathfrak{S}_n$ permuting the factors.
\end{notation}

The Hilbert scheme $\PP^{2[n]}$ parametrizes subschemes $Z$  of $\PP^2$ of dimension zero with $\dim H^0(Z, \OO_Z) = n$. A subscheme $Z$ consisting of $n$ distinct, reduced points of $\PP^2$ has Hilbert polynomial $n$. Therefore, $Z$ induces a point of $\PP^{2[n]}$. The following fundamental theorem of Fogarty asserts that $\PP^{2[n]}$ is a smooth, irreducible variety and the locus parametrizing $n$ distinct, reduced points of $\PP^2$ forms a Zariski dense, open subset of $\PP^{2[n]}$.  

\begin{theorem}[Fogarty \cite{fogarty}]\label{smooth}
The Hilbert scheme  $\PP^{2[n]}$ is a smooth, irreducible, projective variety of dimension $2n$. The Hilbert scheme $\PP^{2[n]}$ admits a natural morphism to the symmetric product $\PP^{2(n)}$ called the {\em Hilbert-Chow morphism} $$h: \PP^{2[n]} \rightarrow \PP^{2(n)}.$$ The morphism $h$ is  birational and gives a crepant desingularization of the symmetric product $\PP^{2(n)}$. 
\end{theorem}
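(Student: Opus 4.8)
The plan is to establish Fogarty's theorem in three stages: first the scheme structure of $\PP^{2[n]}$, then smoothness together with irreducibility and the dimension count, and finally the construction and properties of the Hilbert--Chow morphism. The one genuinely substantial input is a local deformation-theoretic computation that uses crucially that $\PP^2$ is a \emph{surface}. Existence and projectivity are formal: by Grothendieck's construction the Hilbert functor with constant Hilbert polynomial $n$ on the projective scheme $\PP^2$ is represented by a closed subscheme of the Grassmannian of $n$-dimensional quotients of $H^0(\PP^2,\OO_{\PP^2}(m))$ for $m\gg 0$, so $\PP^{2[n]}$ exists and is projective. Deformation theory then identifies the Zariski tangent space at a point $[Z]$ with $\Hom_{\OO_{\PP^2}}(I_Z,\OO_Z)$, and the goal becomes to show this has dimension exactly $2n$.

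Since $\OO_Z$ is a skyscraper sheaf, $\Hom_{\OO_{\PP^2}}(I_Z,\OO_Z)$ is a direct sum of local contributions $\Hom_R(I,R/I)$ with $R=\OO_{\PP^2,p}\cong\CC[[x,y]]$ and $I\subset R$ of colength $n_p$. Here $\dim\PP^2=2$ is indispensable: by Auslander--Buchsbaum $R/I$ has projective dimension $2$, so by the Hilbert--Burch theorem it has a minimal resolution $0\to R^{m-1}\to R^m\to R\to R/I\to 0$, giving $0\to R^{m-1}\to R^m\to I\to 0$. Applying $\Hom_R(-,R/I)$ (and using $\Ext^{\geq 2}_R(I,R/I)=0$) and taking Euler characteristics yields $\dim_{\CC}\Hom_R(I,R/I)=n_p+\dim_{\CC}\Ext^1_R(I,R/I)=n_p+\dim_{\CC}\Ext^2_R(R/I,R/I)$; and local duality on the regular local ring $R$ identifies $\Ext^2_R(R/I,R/I)$ with a space of dimension $n_p$ (essentially the canonical module of the Artinian ring $R/I$). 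Hence $\dim_{\CC}\Hom_R(I,R/I)=2n_p$, and summing over the support of $Z$ gives $\dim T_{[Z]}\PP^{2[n]}=2n$ for every $[Z]$; in particular $\dim_{[Z]}\PP^{2[n]}\leq 2n$ everywhere. (The analogous statement fails on $\PP^3$, which is exactly why the argument is special to surfaces.) I expect this step to be the main obstacle.

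For the matching lower bound and for irreducibility I would invoke smoothability: every zero-dimensional subscheme of $\PP^2$ is a flat limit of $n$ distinct reduced points. Locally this reduces to the punctual Hilbert scheme of $\CC[[x,y]]$, which by Brian\c{c}on's theorem is irreducible with general point curvilinear ($\CC[[t]]/(t^n)$), and curvilinear schemes are visibly smoothable; the case of support at several points is handled componentwise. Thus $\PP^{2[n]}$ equals the closure of the open locus $U$ of reduced configurations, and $U$---a free $\mathfrak{S}_n$-quotient of an open subset of $(\PP^2)^n$---is irreducible and smooth of dimension $2n$. Therefore $\PP^{2[n]}$ is irreducible of dimension exactly $2n$, so $\dim_{[Z]}\PP^{2[n]}=2n=\dim T_{[Z]}\PP^{2[n]}$ at every point, i.e.\ $\PP^{2[n]}$ is smooth. (Alternatively, granting Hartshorne's connectedness theorem for Hilbert schemes, connectedness together with the constant tangent dimension and the smooth dense chart $U$ forces smoothness.)

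Finally, the Hilbert--Chow morphism is defined set-theoretically by $[Z]\mapsto\sum_p\mathrm{length}_p(\OO_Z)\,[p]\in\PP^{2(n)}$; that this is a morphism of schemes is Grothendieck's norm (divided-power) construction. It restricts to an isomorphism between the loci of reduced configurations, hence is birational; since $\PP^{2(n)}=(\PP^2)^n/\mathfrak{S}_n$ is normal (a finite quotient of a smooth variety) and $\PP^{2[n]}$ is smooth, $h$ is a resolution of singularities, with exceptional locus exactly the irreducible boundary divisor $B$ of non-reduced schemes (of dimension $2n-1$). For crepancy write $K_{\PP^{2[n]}}=h^*K_{\PP^{2(n)}}+aB$; since $B$ is the unique $h$-exceptional divisor it suffices to compute $a$ at a general point of $B$, where $Z$ consists of a length-$2$ scheme plus $n-2$ free simple points and $h$ is, \'etale-locally, the product of a smooth factor with the minimal resolution of the $A_1$ surface singularity $\CC^2/\{\pm 1\}$; that resolution is crepant, so $a=0$ and $h$ is a crepant desingularization. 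Everything here is routine once smoothness and the normality of $\PP^{2(n)}$ are in hand, so the computation of $\dim T_{[Z]}\PP^{2[n]}$ remains the crux of the proof.
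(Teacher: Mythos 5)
The paper offers no proof of Theorem \ref{smooth}: it is stated as a quoted result of Fogarty with a citation to \cite{fogarty}, so there is no in-paper argument to measure yours against, and the assessment below is of your outline on its own terms. That outline is essentially the standard proof and is correct, but two points deserve comment. First, your primary route to irreducibility passes through Brian\c{c}on's theorem on punctual Hilbert schemes, a considerably harder (and historically later) result than is needed; your parenthetical alternative is in fact Fogarty's own argument and is cleaner. Namely, connectedness of $\PP^{2[n]}$ (by induction on $n$ via the nested Hilbert scheme, or by Hartshorne's connectedness theorem), the equality $\dim T_{[Z]}\PP^{2[n]}=2n$ at every point, and the presence of the $2n$-dimensional component $\overline{U}$ already force smoothness and irreducibility together: every component has dimension at most $2n$ since local dimension is bounded by tangent dimension, and if a component of dimension less than $2n$ existed, connectedness would produce a point lying both on it and on a $2n$-dimensional component, where the local ring would be regular (dimension equals embedding dimension equals $2n$) and hence a domain, contradicting the presence of two components. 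No smoothability input is required. Second, in the tangent-space computation the phrase ``local duality identifies $\Ext^2_R(R/I,R/I)$ with the canonical module'' elides a step: local duality computes $\Ext^2_R(R/I,R)\cong\omega_{R/I}$, and one must then use that the top Ext functor $\Ext^2_R(R/I,-)$ is right exact (as $R$ has global dimension $2$) to get $\Ext^2_R(R/I,R/I)\cong\Ext^2_R(R/I,R)\otimes_R R/I\cong\omega_{R/I}$, which has length $n_p$. This is correct but is the one place a reader could stumble.

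On the Hilbert--Chow morphism, your construction and the birationality argument are standard. For crepancy, note that the assertion presupposes that $K_{\PP^{2(n)}}$ is $\QQ$-Cartier; this holds because $(\PP^2)^n\to\PP^{2(n)}$ is \'etale in codimension one, so the canonical class descends. Granting that, your \'etale-local identification of $h$ near a general point of $B$ with (a smooth factor times) the minimal resolution of the $A_1$ singularity $\CC^2/\{\pm1\}$ correctly gives discrepancy zero along the unique exceptional divisor $B$. It is worth observing that the paper runs this logic in the opposite direction in the proof of Theorem \ref{dream}, deducing $K_{\PP^{2[n]}}=-3H$ from crepancy; your computation is an independent verification consistent with that later use.
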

\smallskip

Theorem \ref{smooth} guarantees that every Weil divisor on $\PP^{2[n]}$ is Cartier. Hence, we can define Cartier divisors on $\PP^{2[n]}$ by imposing codimension one geometric conditions on schemes parametrized by $\PP^{2[n]}$. The Hilbert-Chow morphism allows one to compute the Picard group of $\PP^{2[n]}$. There are two natural geometric divisor classes on $\PP^{2[n]}$. 

\begin{notation}
Let $H = h^*(c_1(\OO_{\PP^{2(n)}}(1)))$ be the class of the pull-back of the ample generator from the symmetric product $\PP^{2(n)}$.  The exceptional locus of the Hilbert-Chow morphism is an irreducible divisor whose class we denote by $B$. 
\end{notation}

Geometrically, $H$  is the class of the locus of subschemes $Z$ in $\PP^{2[n]}$ whose supports intersect a fixed line $l \subset \PP^2$. Since $H$ is the pull-back of an ample divisor by a birational morphism,  $H$ is big and nef. The class $B$ is the class of the locus of non-reduced schemes. 
The following theorem of Fogarty determines the Neron-Severi space of $\PP^{2[n]}$.

\begin{theorem}[Fogarty \cite{fogarty2}]\label{pic}
The Picard group of the Hilbert scheme of points $\PP^{2[n]}$ is the free abelian group generated by $\OO_{\PP^{2[n]}}(H)$ and $\OO_{\PP^{2[n]}}(\frac{B}{2})$. The Neron-Severi space $N^1(\PP^{2[n]}) = \operatorname{Pic}(\PP^{2[n]}) \otimes \QQ$ and is spanned  by the divisor classes $H$ and $B$.
\end{theorem}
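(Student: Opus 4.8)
The plan is to reduce to the open locus of reduced subschemes by means of the Hilbert--Chow morphism, compute the Picard group there by equivariant descent from the ordered configuration space, and then lift the answer back up $h$. Write $U^{(n)}\subset\PP^{2(n)}$ for the complement of the diagonal $\Delta$ (the locus where two of the $n$ points collide); the Hilbert--Chow morphism is an isomorphism over the locus of $n$ distinct points, which identifies $\PP^{2[n]}\setminus B$ with $U^{(n)}$, and $B$ is irreducible. Since $\PP^{2[n]}$ is smooth, restriction of line bundles is a surjection $\operatorname{Pic}(\PP^{2[n]})\twoheadrightarrow\operatorname{Pic}(U^{(n)})$ whose kernel is generated by $\OO_{\PP^{2[n]}}(B)$; moreover $[B]$ is not torsion (a nonzero effective divisor on a projective variety cannot have trivial associated bundle, since $h^0(\OO_{\PP^{2[n]}})=1$). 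Hence there is an exact sequence
\[
0\longrightarrow\ZZ\cdot[B]\longrightarrow\operatorname{Pic}(\PP^{2[n]})\longrightarrow\operatorname{Pic}(U^{(n)})\longrightarrow 0,
\]
and it remains to compute $\operatorname{Pic}(U^{(n)})$ and to identify lifts of its generators.

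To compute $\operatorname{Pic}(U^{(n)})$, let $\widetilde U\subset(\PP^2)^n$ be the ordered configuration space, an \'etale $\mathfrak{S}_n$-cover of $U^{(n)}$. The pairwise diagonals in $(\PP^2)^n$ have codimension two, so removing them changes neither the Picard group nor the units: $\operatorname{Pic}(\widetilde U)=\bigoplus_{i=1}^n\ZZ H_i$ and $\OO^{*}(\widetilde U)=\CC^{*}$. The standard exact sequence for the free finite quotient $\widetilde U\to U^{(n)}$,
\[
0\to H^1(\mathfrak{S}_n,\CC^{*})\to\operatorname{Pic}(U^{(n)})\to\operatorname{Pic}(\widetilde U)^{\mathfrak{S}_n}\to H^2(\mathfrak{S}_n,\CC^{*}),
\]
then does it: $H^1(\mathfrak{S}_n,\CC^{*})=\operatorname{Hom}(\mathfrak{S}_n,\CC^{*})=\ZZ/2$ (the sign character, $n\ge2$), $\operatorname{Pic}(\widetilde U)^{\mathfrak{S}_n}=\ZZ\cdot(H_1+\cdots+H_n)$, and the generator $H_1+\cdots+H_n$ maps to $0$ in $H^2$ because $\OO(1,\dots,1)$ carries the evident $\mathfrak{S}_n$-linearization permuting tensor factors. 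Thus $\operatorname{Pic}(U^{(n)})\cong\ZZ\oplus\ZZ/2$: the free part is generated by the descent of $\OO(1,\dots,1)$, which is $H|_{U^{(n)}}$, and the torsion part by the sign line bundle, which is nontrivial because an invertible regular function on $\widetilde U$ is a constant and so cannot transform by the sign.

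Now I would lift back. By the affine cell decomposition of $\PP^{2[n]}$ (Ellingsrud--Str{\o}mme), $H^{*}(\PP^{2[n]},\ZZ)$ is torsion-free and concentrated in even degrees; in particular $b_1=0$, so $H^1(\OO_{\PP^{2[n]}})=0$ and $\operatorname{Pic}(\PP^{2[n]})=\operatorname{NS}(\PP^{2[n]})$ is free of finite rank. In the exact sequence above, a finitely generated free group modulo a copy of $\ZZ$ with quotient $\ZZ\oplus\ZZ/2$ necessarily has rank $2$, and $[B]$ must equal $2w$ for some primitive class $w$; set $\OO_{\PP^{2[n]}}(\tfrac B2):=w$. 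Then $H$ maps to a generator of the free quotient of $\operatorname{Pic}(U^{(n)})$ and $w=\tfrac B2$ maps to its order-two generator (it must, since $w\notin\ZZ\cdot[B]$), so $\{H,\tfrac B2\}$ together with $\ker=\ZZ\cdot[B]\subset\ZZ\cdot w$ generate $\operatorname{Pic}(\PP^{2[n]})$; being a two-element generating set of a rank-two free group, they form a basis. Tensoring with $\QQ$ gives the assertion about $N^1(\PP^{2[n]})$, the classes $H$ and $B$ being independent because $B|_{U^{(n)}}=0$ while $H|_{U^{(n)}}\ne0$.

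The delicate point is the half-integrality of $B$ --- the appearance of the $\tfrac12$ --- and I expect this to be the main obstacle. In the argument above it is extracted by playing the $\ZZ/2$ in $\operatorname{Pic}(U^{(n)})$ against the torsion-freeness of $\operatorname{Pic}(\PP^{2[n]})$; one can instead avoid citing the latter by exhibiting $\OO_{\PP^{2[n]}}(-\tfrac B2)=\det\!\bigl(\pi_{*}\OO_{\mathcal Z}\bigr)$, where $\mathcal Z\subset\PP^{2[n]}\times\PP^2$ is the universal subscheme and $\pi$ its projection to $\PP^{2[n]}$: this determinant is trivial over the reduced locus, so $c_1$ of it is supported on $B$, and the computation showing the coefficient is $-\tfrac12$ amounts to analyzing $h$ at the generic point of $B$, where it is the minimal resolution of a transverse $A_1$-singularity and $B$ meets a fibre in a $(-2)$-curve. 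Pinning down that coefficient, together with the verification that the sign bundle accounts for \emph{all} the torsion of $\operatorname{Pic}(U^{(n)})$, is where the real work sits.
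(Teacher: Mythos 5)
The paper does not actually prove this statement---it is quoted from Fogarty's paper [F2]---so there is no in-house argument to compare against; judged on its own terms, your proof is correct and complete. The skeleton is sound: the excision sequence $0\to\ZZ\cdot[B]\to\operatorname{Pic}(\PP^{2[n]})\to\operatorname{Pic}(U^{(n)})\to 0$ is valid because $\PP^{2[n]}$ is smooth and $B$ is irreducible and non-torsion; the Hochschild--Serre five-term sequence for the free $\mathfrak{S}_n$-cover $\widetilde U\to U^{(n)}$ correctly yields $\operatorname{Pic}(U^{(n)})\cong\ZZ\oplus\ZZ/2$; and torsion-freeness of $\operatorname{Pic}(\PP^{2[n]})$ (from the cell decomposition, or just from $H^1(\OO)=0$ together with simple connectedness) forces $\operatorname{Pic}(\PP^{2[n]})\cong\ZZ^2$ with $[B]$ divisible by exactly $2$, after which your generation argument for $\{H,\tfrac B2\}$ is airtight. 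Two remarks. First, your closing paragraph worries about issues your own argument has already discharged: the five-term sequence shows the sign bundle is \emph{all} of the torsion of $\operatorname{Pic}(U^{(n)})$ (any torsion class dies in $\operatorname{Pic}(\widetilde U)^{\mathfrak{S}_n}\cong\ZZ$, hence lies in $H^1(\mathfrak{S}_n,\CC^*)$), and since $\operatorname{Pic}(\PP^{2[n]})$ is torsion-free the class $w$ with $2w=B$ is \emph{unique}, so ``$\OO(\tfrac B2)$'' is unambiguous and no determinant-line-bundle computation of the coefficient $-\tfrac12$ is needed to prove the theorem as stated. Second, this descent-theoretic route is genuinely different from Fogarty's original argument, which works with the Picard scheme of $X^{[n]}$ for a general regular surface and deduces $\operatorname{Pic}(X^{[n]})\cong\operatorname{Pic}(X)\times\ZZ$; your method is more topological and specific to having a concrete étale cover of the reduced locus, but it buys a short, self-contained proof for $\PP^2$ and makes the provenance of the $\tfrac12$ (the sign character of $\mathfrak{S}_n$) completely transparent.
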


The {\em stable base locus decomposition} of a projective variety $Y$ is the partition of the the effective cone of  $Y$ into chambers according to the stable base locus of the corresponding divisors. The following theorem is the main finiteness statement concerning this decomposition for $\PP^{2[n]}$.

\begin{theorem}\label{dream}
The Hilbert scheme $\PP^{2[n]}$ is a log Fano variety. In particular, $\PP^{2[n]}$ is a Mori dream space and the stable base locus decomposition of the effective cone of $\PP^{2[n]}$ is a finite, rational polyhedral decomposition.
\end{theorem}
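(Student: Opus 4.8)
The plan is to recognize $\PP^{2[n]}$ as a log Fano variety and then invoke the theorem of Birkar--Cascini--Hacon--McKernan together with the Mori-dream-space formalism of Hu and Keel; the second ("In particular") assertion is then purely formal.

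First I would pin down the anticanonical class. Because the Hilbert--Chow morphism $h\colon \PP^{2[n]}\to\PP^{2(n)}$ is crepant (Theorem \ref{smooth}), $K_{\PP^{2[n]}}=h^*K_{\PP^{2(n)}}$ carries no contribution from the exceptional divisor $B$, and transporting the standard computation on the symmetric product gives $K_{\PP^{2[n]}}=-3H$. (For $n=2$ one sees this directly by realizing $\PP^{2[2]}$ as the quotient by $\ZZ/2\ZZ$ of the blow-up of $\PP^2\times\PP^2$ along the diagonal, and observing that the discrepancy $+\cE$ of the blow-up is exactly cancelled by the ramification $-\cE$ of the quotient along the exceptional divisor; the general case is the familiar identity $K_{S^{[n]}}=K_S^{[n]}$.) In particular $-K_{\PP^{2[n]}}=3H$ is big and nef, since $H$ is.

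Since $3H$ is big and nef but not ample for $n\ge 2$ (it is trivial on the positive-dimensional fibres of $h$), $\PP^{2[n]}$ is only weakly Fano, so next I would introduce a klt boundary. By Kodaira's lemma, write $3H\equiv A+E$ with $A$ an ample $\QQ$-divisor and $E\ge 0$ effective. Then for $0<\epsilon<1$,
\[
-(K_{\PP^{2[n]}}+\epsilon E)=3H-\epsilon E\equiv 3(1-\epsilon)H+\epsilon A
\]
is the sum of a nef and an ample class, hence ample. As $\PP^{2[n]}$ is smooth (Theorem \ref{smooth}), the pair $(\PP^{2[n]},0)$ is klt, and since klt-ness is an open condition on the coefficients (pull back to a log resolution of $(\PP^{2[n]},E)$, where every discrepancy is $\ge 1$), the pair $(\PP^{2[n]},\epsilon E)$ is klt for all sufficiently small $\epsilon>0$. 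Fixing such an $\epsilon$ exhibits $\PP^{2[n]}$ as a log Fano variety. If one prefers an explicit boundary, the description of the ample cone reviewed in \S\ref{s-ample} lets one instead take $E=B$: the class $3H-\epsilon B$ is ample for $\epsilon$ small, and $(\PP^{2[n]},\epsilon B)$ is klt for $\epsilon$ small precisely because $\PP^{2[n]}$ is smooth.

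Finally, a $\QQ$-factorial klt log Fano variety has finitely generated Cox ring by \cite{bchm}, which by the theorem of Hu and Keel is exactly the condition that $\PP^{2[n]}$ is a Mori dream space. For a Mori dream space the pseudoeffective cone $\overline{\operatorname{Eff}}(\PP^{2[n]})$ is a finite union of rational polyhedral Mori chambers, and on the interior of each such chamber the associated rational contraction $\PP^{2[n]}\dashrightarrow \PP^{2[n]}(D)$ -- hence the stable base locus $\mathbf{B}(D)$ -- is constant; amalgamating the chambers that share a common stable base locus shows the stable base locus decomposition is finite and rational polyhedral. The only space-specific ingredient is the anticanonical computation of the first step, which is classical; the real weight of the argument is carried by \cite{bchm}, so I would not expect any genuine obstacle once the canonical class is identified.
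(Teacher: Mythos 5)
Your proposal is correct and follows essentially the same route as the paper: compute $K_{\PP^{2[n]}}=-3H$ from the crepancy of the Hilbert--Chow morphism, perturb by a small multiple of an effective divisor (the paper uses $\epsilon B$, exactly your second, explicit option, with ampleness of $3H-\epsilon B$ coming from the nef cone computation) to get a klt log Fano pair, and then quote \cite{bchm} and \cite{hu:keel}. The Kodaira-lemma variant you offer first is a harmless generalization of the same idea.
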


\begin{proof}
By \cite{bchm}, a log Fano variety is a Mori dream space. The stable base locus decomposition of the effective cone of a Mori dream space is rational, finite polyhedral \cite{hu:keel}. Hence, the only claim we need to verify is that $\PP^{2[n]}$ is log Fano. The Hilbert-Chow morphism is a crepant resolution of the symmetric product $\PP^{2(n)}$. Consequently, the canonical class of $\PP^{2[n]}$ is $K_{\PP^{2[n]}} = -3H$. Hence $-K_{\PP^{2[n]}}=3H$ is big and nef. However, $-K_{\PP^{2[n]}}= 3H$ is not ample since its intersection number with a curve in the fiber of the Hilbert-Chow morphism is zero. In Corollary \ref{nef-cone}, we will see that the divisor class $-(K_{\PP^{2[n]}} + \epsilon B) =3 H - \epsilon B$ is ample for $1 > > \epsilon >0$.  To conclude that $\PP^{2[n]}$ is log Fano, we need to know that the pair $(\PP^{2[n]}, \epsilon B)$ is klt for some small $\epsilon >0$.  Since $\PP^{2[n]}$ is smooth, as long as $\epsilon$ is chosen smaller than the log canonical threshold of $B$, the pair $(\PP^{2[n]}, \epsilon B)$ is klt. Therefore, $\PP^{2[n]}$ is a log Fano variety.
\end{proof}

\begin{remark}
More generally, Fogarty \cite{fogarty} proves that if $X$ is a smooth, projective surface, then the Hilbert scheme of points $X^{[n]}$ is a smooth, irreducible, projective variety of dimension $2n$ and the Hilbert-Chow morphism is a crepant resolution of the symmetric product $X^{(n)}$. Moreover, if $X$ is a regular surface (i.e., $H^1(X, \OO_X)=0$), then the Picard group of $X^{[n]}$ is isomorphic to Pic$(X) \times \ZZ$, hence the Neron-Severi space of $X^{[n]}$ is generated by the Picard group of $X$ and the class $B$ of the divisor of non-reduced schemes \cite[Corollary 6.3]{fogarty2}. Theorem \ref{dream} remains true with the same proof if we replace $\PP^{2[n]}$ by the Hilbert scheme of points on a del Pezzo surface. 
\end{remark}

\section{Effective divisors on  $\PP^{2[n]}$}\label{s-ample}

In this section, we introduce divisor classes on $\PP^{2[n]}$ that play a crucial role in the birational geometry of $\PP^{2[n]}$. We also recall the description of the ample cone of $\PP^{2[n]}$ studied in \cite{catanese} and \cite{li}. 

Let $k$ and $n$ be integers such that  $$\frac{k(k+3)}{2} \geq n.$$ Let $Z \in \PP^{2[n]}$ be a zero-dimensional scheme of length $n$. The long exact sequence associated to the exact sequence of sheaves
$$ 0 \rightarrow \cI_Z (k) \rightarrow \OO_{\PP^2} (k) \rightarrow \OO_Z(k) \rightarrow 0$$ 
gives rise to the  inclusion $$H^0(\PP^2, \cI_Z(k)) \rightarrow H^0(\PP^2, \OO_{\PP^2}(k)).$$  This inclusion induces a rational map to the Grassmannian $$\phi_k : \PP^{2[n]} \dashrightarrow G_k = G\left( {k+2 \choose 2}-n, {k+2 \choose 2} \right).$$ Let $D_k(n)= \phi_k^*(\OO_{G_k}(1))$ denote the pull-back of $\OO_{G_k}(1)$ by $\phi_k$.

\begin{proposition}\label{nef}
\begin{enumerate}
\item The class of $D_k(n)$ is given by 
$$D_k(n) = kH - \frac{B}{2}.$$
\item $($\cite{beltrametti}, \cite{catanese}, \cite{li} Lemma 3.8$)$  $D_k(n)$ is very ample if $k \geq n$.
\item $($\cite{beltrametti}, \cite{catanese}, \cite{li} Proposition 3.12$)$ $D_{n-1}(n)$ is base-point-free, but not ample.
\item When $k \leq n-2$, the base locus of $D_k(n)$ is contained in the locus of subschemes that fail to impose independent conditions on  curves of degree $k$. 
\end{enumerate}
\end{proposition}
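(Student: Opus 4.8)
The strategy is to produce, for every scheme $Z$ that imposes independent conditions on curves of degree $k$, an explicit effective divisor linearly equivalent to $D_k(n)$ which does not contain $Z$; then $Z$ cannot lie in the base locus. Put $V = H^0(\PP^2, \OO_{\PP^2}(k))$, so $\dim V = \binom{k+2}{2} \geq n+1$ by the standing hypothesis $\frac{k(k+3)}{2} \geq n$. For every $Z \in \PP^{2[n]}$ one has $\dim H^0(\cI_Z(k)) \geq \dim V - n$, with equality precisely on the dense open subset $U \subseteq \PP^{2[n]}$ of schemes imposing independent conditions on curves of degree $k$, i.e. those with $h^1(\cI_Z(k)) = 0$; the subset $U$ is nonempty because $n \leq \dim V$ general points of $\PP^2$ impose independent conditions on degree $k$ curves. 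Over $U$ the subspaces $H^0(\cI_Z(k)) \subseteq V$ assemble into a rank $(\dim V - n)$ subbundle of $V \otimes \OO_U$, so $\phi_k$ restricts to a morphism $U \to G_k$ classifying it, and by part (1) one has $D_k(n) = \phi_k^*\OO_{G_k}(1) = kH - \frac{B}{2}$ as divisor classes on $\PP^{2[n]}$.

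For a linear subspace $W \subseteq V$ of dimension $n$, let $D_W$ be the closure in $\PP^{2[n]}$ of the locus
$$\{\, Z \in U \ : \ H^0(\cI_Z(k)) \cap W \neq 0 \,\},$$
that is, the set of schemes lying on some degree $k$ curve of the linear system $\PP(W)$. Inside $U$ this is the $\phi_k$-preimage of the special Schubert divisor $\{\Lambda \in G_k : \Lambda \cap W \neq 0\}$ associated to $W$; since that Schubert divisor is a hyperplane section of $G_k$ in its Plücker embedding, it represents $c_1(\OO_{G_k}(1))$. Consequently, as long as $D_W \neq \PP^{2[n]}$, the set $D_W$ is an effective divisor of class $D_k(n)$ — this is where part (1) enters, the point underneath it being that $\PP^{2[n]} \setminus U$ has codimension at least $2$, so that the pulled-back Plücker sections extend to global sections of $D_k(n)$ on all of $\PP^{2[n]}$ — and hence $D_W \in |D_k(n)|$.

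Now let $Z$ impose independent conditions on curves of degree $k$, so that $\dim H^0(\cI_Z(k)) = \dim V - n$, and choose any $W \subseteq V$ of dimension $n$ with $H^0(\cI_Z(k)) \oplus W = V$. The defining locus of $D_W$ is already closed in $U$, so $D_W \cap U$ consists exactly of those $Z' \in U$ with $H^0(\cI_{Z'}(k)) \cap W \neq 0$; in particular $Z \notin D_W$, so $D_W \neq \PP^{2[n]}$, so $D_W \in |D_k(n)|$ by the previous paragraph, and therefore $Z \notin \operatorname{Bs}|D_k(n)|$ — a fortiori $Z$ is not in the stable base locus $\mathbf{B}(D_k(n)) \subseteq \operatorname{Bs}|D_k(n)|$ either. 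Since $Z \in U$ was arbitrary, $\operatorname{Bs}|D_k(n)| \subseteq \PP^{2[n]} \setminus U$, which is precisely the locus of subschemes failing to impose independent conditions on curves of degree $k$. The hypothesis $k \leq n-2$ is never used: for $k \geq n-1$ the divisor $D_k(n)$ is already base-point free by parts (2)--(3), so the statement is vacuous there. The whole argument is linear algebra once one grants $[D_W] = D_k(n)$, and I would expect the only real care to be needed in that identification — equivalently, in checking that the indeterminacy locus of $\phi_k$ (contained in $\PP^{2[n]} \setminus U$) has codimension at least $2$, so that members of $\phi_k^*|\OO_{G_k}(1)|$ extend to honest divisors in $|D_k(n)|$ on $\PP^{2[n]}$.
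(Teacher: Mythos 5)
Your argument for part (4) is correct and is in substance the paper's own: the paper simply observes that $\OO_{G_k}(1)$ is base-point-free on the Grassmannian, so the base locus of $D_k(n)=\phi_k^*\OO_{G_k}(1)$ is contained in the indeterminacy locus of $\phi_k$, which is exactly the locus of schemes failing to impose independent conditions on curves of degree $k$; your Schubert-divisor construction of $D_W$ from a complement $W$ of $H^0(\cI_Z(k))$ is precisely that base-point-freeness made explicit. Neither you nor the paper verifies that the indeterminacy locus has codimension at least $2$ (which is what lets the pulled-back Pl\"ucker sections extend to honest members of $|D_k(n)|$); you at least flag this, so on that point you are no worse off than the published argument.

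The genuine gap is part (1). The proposition asserts the class formula $D_k(n)=kH-\frac{B}{2}$, and you invoke it (``by part (1) one has\dots'') without proving it. In fact your base-locus argument never uses the formula --- only the definition $D_k(n)=\phi_k^*\OO_{G_k}(1)$ --- so the appeal to (1) is a red herring, but the class computation remains an unproven assertion of the statement you were asked to prove. The paper establishes it by pairing $D_k(n)$ with two test curves: $R_{l_1}$, obtained by fixing $n-1$ general points and moving an $n$-th point along a general line (giving $R_{l_1}\cdot H=1$, $R_{l_1}\cdot B=0$, $R_{l_1}\cdot D_k(n)=k$), and $R_{l_2}$, the same with the line passing through one of the fixed points (giving $R_{l_2}\cdot H=1$, $R_{l_2}\cdot B=2$, $R_{l_2}\cdot D_k(n)=k-1$); solving these two linear equations yields $kH-\frac{B}{2}$. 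You would need to supply this (or an equivalent computation, e.g.\ the Grothendieck--Riemann--Roch calculation of $c_1(E^{[n]})$ as in Proposition \ref{interpClass}) for the proposal to be a complete proof. Omitting parts (2) and (3) is defensible since the statement itself attributes them to the literature, though the paper does sketch them via $k$-very ampleness and Bezout.
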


\begin{proof}
Recall that a line bundle $L$ on a projective surface $S$ is called {\em $k$-very ample} if the restriction map $H^0(S, L) \rightarrow H^0(S, \OO_Z \otimes L)$ is surjective for every $Z$ in the Hilbert scheme $S^{[k+1]}$. In \cite{catanese}, Catanese and G\"{o}ttsche determine conditions that guarantee that a line bundle is $k$-very ample. When $S=\PP^2$, their results imply that $\OO_{\PP^2}(n)$ is $n$-very ample. It follows that $\phi_k$ is a morphism on $\PP^{2[n]}$ when $k \geq n-1$ and an embedding when $k \geq n$. 

Observe that $\phi_{n-1}$ is constant along the locus of schemes $Z \in \PP^{2[n]}$ that are supported on a fixed line $l$. By Bezout's Theorem, every curve of degree $n-1$ vanishing on $Z$ must vanish on $l$. Hence, $H^0(\PP^2, \cI_Z(n-1))$ is the space of polynomials of degree $n-1$ that are divisible by the defining equation of $l$. Therefore, $D_{n-1}(n)$ is base-point-free, but not ample. Assertions (2) and (3) of the proposition follow. We refer the reader to \S 3 of \cite{li} for a more detailed exposition. 

The line bundle $\OO_{G_k}(1)$ is base-point-free on the Grassmannian. Hence, the base locus of $D_{k}(n)$ is contained in the locus where $\phi_k$ fails to be a morphism. $\phi_k$ fails to be a morphism at $Z$ when the restriction map  $H^0(\PP^2, \OO_{\PP^2}(k)) \rightarrow H^0(\PP^2, \OO_Z \otimes \OO_{\PP^2}(k) )$ fails to be surjective for $Z \in \PP^{2[n]}$, equivalently when $Z$ fails to impose independent conditions on curves of degree $k$. Assertion (4) follows. 

Finally, to prove (1), we can intersect the class $D_k(n)$ with test curves. Fix a set $\Gamma$ of $n-1$ general points in $\PP^2$. Let $l_1$ be a general line in $\PP^2$. The schemes $p \cup \Gamma$ for $p \in l_1$ have Hilbert polynomial $n$ and induce a curve $R_{l_1}$ in $\PP^{2[n]}$ parametrized by $l_1$. The following intersection numbers are straightforward to compute $$R_{l_1} \cdot H =1, \ \ R_{l_1} \cdot B = 0, \ \ R_{l_1} \cdot D_{k}(n) = k.$$  Let $l_2$ be a general line in $\PP^2$ containing one of the points of $\Gamma$. Similarly, let  $R_{l_2}$ be the curve induced in $\PP^{2[n]}$ by $\Gamma$ and $l_2$. The following intersection numbers are straightforward to compute $$R_{l_2} \cdot H =1, \ \ R_{l_2} \cdot B = 2, \ \ R_{l_2} \cdot D_{k}(n) = k-1.$$   These two sets of equations determine the class of $D_{k}(n)$ in terms of the basis $H$ and $B$. This concludes the proof of the proposition. 
\end{proof}

\begin{corollary}[\cite{li}]\label{nef-cone}
The nef cone of $\PP^{2[n]}$ is the closed, convex cone bounded by the rays $H$ and $D_{n-1}(n)= (n-1)H - \frac{B}{2}$. The nef cone of $\PP^{2[n]}$ equals the base-point-free cone of $\PP^{2[n]}$.
\end{corollary}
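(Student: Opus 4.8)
The plan is to bound the nef cone from both inside and outside using the divisors already introduced. We want to show the nef cone is exactly the cone spanned by $H$ and $D_{n-1}(n) = (n-1)H - \frac{B}{2}$. Since the Neron-Severi space is two-dimensional with basis $H, B$ (Theorem \ref{pic}), it suffices to pin down the two boundary rays.

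\textbf{One inclusion (these rays are nef).} By Proposition \ref{nef}(3), $D_{n-1}(n)$ is base-point-free, hence nef. The class $H = h^*(c_1(\OO_{\PP^{2(n)}}(1)))$ is the pull-back of an ample class under the Hilbert-Chow morphism, hence big and nef. Any nonnegative combination of two nef classes is nef, so the entire cone they span is nef (indeed base-point-free, since both $H$ and $D_{n-1}(n)$ are base-point-free and the base-point-free classes form a cone). This gives one containment and also shows the nef cone and the base-point-free cone agree on this side.

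\textbf{The other inclusion (nothing outside these rays is nef).} Here I would exhibit curve classes that are dual to the boundary rays. To show no class $aH - \frac{b}{2}B$ with $b < 0$ (i.e. beyond $H$ toward the $-B$ side is impossible already; the real content is the $D_{n-1}(n)$ side) can be nef beyond $D_{n-1}(n)$, I use the test curves from the proof of Proposition \ref{nef}. The curve $R_{l_1}$ (moving one point along a line, with $n-1$ general points fixed) satisfies $R_{l_1}\cdot H = 1$, $R_{l_1} \cdot B = 0$; a class failing to be nef against $R_{l_1}$ would have negative $H$-coefficient, so nef classes satisfy $a \geq 0$, meaning we cannot go past the ray $H$ on that side. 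For the other boundary, I need a curve $C$ with $C \cdot D_{n-1}(n) = 0$ and $C \cdot H > 0$: precisely the curve contracted by the morphism $\phi_{n-1}$, namely a general fiber-type curve of the Hilbert-Chow morphism, or more concretely the rational curve in $\PP^{2[n]}$ obtained by fixing $n-2$ general points and letting a length-$2$ scheme vary over all subschemes supported at a fixed point, or the curve $R_{l_2}$-type family lying on a line as in Proposition \ref{nef}. Such a curve $C$ has $C \cdot H = 1$ (or some positive number) and, since $\phi_{n-1}$ contracts it, $C \cdot D_{n-1}(n) = 0$; then any class $D_{n-1}(n) + tB = (n-1)H - (\frac12 - t)B$ with $t>0$ pairs to $C\cdot(\frac{B}{2} - tB)\cdot(\ldots)$ — more carefully, $C\cdot B > 0$ forces $C \cdot (D_{n-1}(n) + tB) = t (C\cdot B)$ to have the wrong sign relative to... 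I should just compute: if $C$ is contracted by $\phi_{n-1}$ and $C \cdot B > 0$, then a class $\alpha H + \beta B$ with $\alpha(n-1)... $ lying strictly on the far side of $D_{n-1}(n)$ has negative intersection with $C$. This shows the nef cone cannot extend past $D_{n-1}(n)$.

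\textbf{Main obstacle.} The genuinely delicate point is producing the curve $C$ with $C \cdot D_{n-1}(n) = 0$ and verifying $C \cdot B > 0$ (so that it obstructs classes beyond $D_{n-1}(n)$) — this is where the geometry of the linear system $|\cI_Z(n-1)|$ and Bezout's theorem enter: $\phi_{n-1}$ is constant on the locus of schemes supported on a fixed line $l$, so taking $C$ to be a complete rational curve inside that locus (e.g. a pencil of length-$n$ divisors on $l \cong \PP^1$, i.e. $R \subset l^{[n]} \subset \PP^{2[n]}$) gives $C \cdot D_{n-1}(n) = 0$, while $C \cdot H = \deg(\text{the } n \text{ points sweeping } l) \cdot (l\cdot \text{line}) $ and $C \cdot B > 0$ because the generic member of the pencil is non-reduced. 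Once both extremal rays are matched by base-point-free divisors on one side and by curves dual to them on the other, the nef cone is squeezed to exactly the stated cone, and the equality with the base-point-free cone follows since both generators are base-point-free. I would organize the writeup as: (i) both generators nef/bpf; (ii) no nef class past $H$ via $R_{l_1}$; (iii) no nef class past $D_{n-1}(n)$ via the pencil-on-a-line curve; (iv) conclude.
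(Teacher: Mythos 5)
Your strategy (both generators are base-point-free, hence nef; then dual curves to exclude anything outside) is workable, and your treatment of the $D_{n-1}(n)$ boundary is essentially right: the pencil of degree-$n$ divisors on a line is the curve class $C_n(n)$ with $C_n(n)\cdot H=1$ and $C_n(n)\cdot B=2(n-1)$, so $C_n(n)\cdot D_{n-1}(n)=0$ and any class strictly beyond $D_{n-1}(n)$ meets it negatively. (One small correction there: $C\cdot B>0$ not because the \emph{generic} member of the pencil is non-reduced --- it is reduced --- but because exactly $2n-2$ members are non-reduced, by Riemann--Hurwitz applied to the degree-$n$ map $l\to\PP^1$.) The genuine gap is on the $H$ side. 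The curve $R_{l_1}$, with $R_{l_1}\cdot H=1$ and $R_{l_1}\cdot B=0$, is dual to $B$, not to $H$: the inequality it gives is $a\ge 0$ for a nef class $aH+bB$, which only excludes classes beyond the ray $B$, and does not exclude classes such as $H+B$ lying strictly between $H$ and $B$. To cut the nef cone off at the ray $H$ you need a curve with $C\cdot H=0$ and $C\cdot B\neq 0$, namely the curve $C(n)$ in a fiber of the Hilbert--Chow morphism, which satisfies $C(n)\cdot H=0$ and $C(n)\cdot B=-2$; then $C(n)\cdot(aH+bB)=-2b<0$ for $b>0$, so no class with positive $B$-coefficient is nef. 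With that substitution your argument closes.

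For comparison, the paper's proof avoids exhibiting dual curves for the outer bound altogether: since $N^1(\PP^{2[n]})$ is two-dimensional, the boundary of the nef cone consists of exactly two rays, and any nef but non-ample class must lie on that boundary. The classes $H$ (nef as a pull-back of an ample class, not ample because it is trivial on fibers of the Hilbert--Chow morphism) and $D_{n-1}(n)$ (base-point-free but not ample by Proposition \ref{nef}(3)) are distinct and each nef but not ample, so they \emph{are} the two extremal rays. Your approach is more explicit and produces the dual curve classes that are reused later in the stable base locus analysis, while the paper's is shorter but leans on the rank-two hypothesis; both are legitimate once the $H$-side curve is corrected.
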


\begin{proof}
Since the Neron-Severi space of $\PP^{2[n]}$ is two-dimensional, the nef cone of $\PP^{2[n]}$ is determined by specifying its two extremal rays. The divisor $H$ is the pull-back of the ample generator of the symmetric product $\PP^{2(n)}$ by the Hilbert-Chow morphism. Hence, it is nef and base-point-free. However, since it has intersection number zero with curves in fibers of the Hilbert-Chow morphism, it is not ample. It follows that $H$ is an extremal ray of the nef cone of $\PP^{2[n]}$. By Proposition \ref{nef} (3), $D_{n-1}(n)$ is base-point-free, hence nef, but not ample. It follows that $D_{n-1}(n)$ forms the second extremal ray of the nef cone. The base-point-free cone is contained in the nef cone and contains the ample cone (which is the interior of the nef cone). Since the extremal rays of the nef cone are base-point-free, we conclude that the nef and base-point-free cones coincide. The last fact holds more generally for extremal rays of Mori dream spaces \cite{hu:keel}. 
\end{proof}

In order to understand the birational geometry of $\PP^{2[n]}$, we need to introduce more divisors. If $n> \frac{k(k+3)}{2},$ then we have another set of divisors on $\PP^{2[n]}$ defined as follows. Let $E_k(n)$ be the class of the divisor of subschemes $Z$ of $\PP^2$ with Hilbert polynomial $n$ that have a subscheme $Z' \hookrightarrow Z$ of degree ${k+2 \choose 2}$ that fails to impose independent conditions on polynomials of degree $k$. For example, a typical point of $E_1(n)$ consists of $Z \in \PP^{2[n]}$ that have three collinear points. A typical point of $E_2(n)$ consists of subschemes $Z \in \PP^{2[n]}$ that have a subscheme of degree six supported on a conic. We will now calculate the class $E_k(n)$ by pairing it with test curves. These test curves will also play an important role in the discussion of the stable base locus decomposition.

Let $C(n)$ denote the fiber of the Hilbert-Chow morphism $h: \PP^{2[n]} \rightarrow \PP^{2(n)}$ over a general point of the diagonal. We note that $$C(n) \cdot H = 0, \ \ \ C(n) \cdot B = -2.$$ Let $r \leq n$. Let $C_r(n)$ denote the curve in $\PP^{2[n]}$ obtained by fixing $r-1$ general points on a line $l$, $n-r$ general points not contained in $l$ and a varying point on $l$. The intersection number are given by $$C_r(n) \cdot H = 1,  \ \ \ C_r(n) \cdot B = 2(r-1).$$ 
\smallskip

\begin{proposition}
The class of $E_k(n)$ is given by 
$$E_k (n)=   { n-1 \choose \frac{k(k+3)}{2}} k H  -  \frac{1}{2} { n-2 \choose \frac{k(k+3)}{2}-1} B $$
\end{proposition}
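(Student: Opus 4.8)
The plan is to determine the two coefficients of $E_k(n)$ in the basis $\{H, B\}$ by intersecting with two suitably chosen test curves, exactly as in the proof of Proposition \ref{nef}. Write $E_k(n) = aH - \frac{b}{2}B$ for unknown rational numbers $a, b$; since $N^1(\PP^{2[n]})$ is two-dimensional, two independent intersection computations pin down $a$ and $b$. The natural curves to use are $C(n)$, the fiber of the Hilbert-Chow morphism over a general diagonal point, and $C_r(n)$ for a well-chosen value of $r$, since their intersection numbers with $H$ and $B$ are already recorded in the excerpt. Because $C(n) \cdot H = 0$ and $C(n) \cdot B = -2$, intersecting with $C(n)$ isolates $b$: one gets $E_k(n) \cdot C(n) = b$. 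Intersecting with some $C_r(n)$ then determines $a$ once $b$ is known, via $E_k(n) \cdot C_r(n) = a - b(r-1)$.

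The substance of the argument is therefore the enumerative computation of these two intersection numbers directly from the modular description of $E_k(n)$. Recall $E_k(n)$ parametrizes length-$n$ schemes $Z$ admitting a subscheme $Z' \subset Z$ of degree $m := \binom{k+2}{2}$ that fails to impose independent conditions on degree-$k$ curves. For $C(n)$: take $n-1$ general fixed points together with a length-$2$ scheme whose support moves along the fiber, i.e. a point $p$ with a tangent direction varying in $\PP^1$. One must count, with multiplicity, how many of these $\PP^1$-many schemes contain a bad length-$m$ subscheme. A general subset of $m$ points among the $n-1$ fixed general points imposes independent conditions, so the bad subscheme must incorporate the moving length-$2$ piece; the count reduces to: for how many tangent directions does (moving length-$2$ scheme) $\cup$ ($m-2$ of the general points) fail to impose independent conditions on degree-$k$ curves? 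This is a classical incidence computation on $\PP^2$, and the answer should come out to $\binom{n-2}{m-1}$, giving $b = \binom{n-2}{\frac{k(k+3)}{2}-1}$ after noting $m - 1 = \frac{k(k+3)}{2} - 1$. For $C_r(n)$ with an appropriate choice (e.g. $r$ chosen so the combinatorics of "how many bad subschemes pass through the moving point" is cleanest, likely $r$ small), a parallel count of bad length-$m$ subschemes containing the moving collinear point yields the relation determining $a = \binom{n-1}{\frac{k(k+3)}{2}} k$.

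The main obstacle is the honest justification of these two binomial counts, in particular making sure the intersections are transverse (so that each bad configuration is counted with multiplicity one) and that no degenerate bad subschemes are missed or overcounted. Concretely, for the $C(n)$ computation one must verify that as the tangent direction at $p$ varies, the condition "$\{p\text{-with-direction}\} \cup S$ fails independence on degree-$k$ curves" (for each of the $\binom{n-2}{m-1}$ choices of $S$ among the remaining general points) is satisfied for exactly one direction, transversely; genericity of the $n-1$ fixed points should guarantee this via a dimension count, since the space of degree-$k$ curves through $m-1$ general points is a single point up to scalar and the extra tangency condition on it is codimension one. For the $C_r(n)$ computation one similarly checks that as the point moves along the line $l$, the bad subschemes it can belong to are counted correctly — here one must be attentive to subschemes that use several of the $r-1$ fixed collinear points, since collinear points are not in general position, and confirm that the resulting contribution still assembles into the stated binomial coefficient. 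Once these transversality and genericity claims are in place, solving the two linear equations for $a$ and $b$ is immediate and yields the asserted formula.
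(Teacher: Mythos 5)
Your proposal takes the same route as the paper: the paper computes $E_k(n)$ by intersecting with exactly the test curves $C_1(n)$ (your $C_r(n)$ with $r=1$) and $C(n)$, asserting the counts $E_k(n)\cdot C_1(n)=\binom{n-1}{k(k+3)/2}k$ and $E_k(n)\cdot C(n)=\binom{n-2}{k(k+3)/2-1}$, so your outline and the transversality caveats you flag are exactly what is needed. Watch two bookkeeping slips that happen to cancel: $C(n)$ consists of $n-2$ (not $n-1$) fixed points plus a length-$2$ scheme at a fixed point with varying direction, and since $m=\binom{k+2}{2}=\frac{k(k+3)}{2}+1$ the relevant count for $C(n)$ is $\binom{n-2}{m-2}$ (one bad direction for each choice of $m-2$ of the fixed points, coming from the unique degree-$k$ curve through those points and $p$), not $\binom{n-2}{m-1}$ with $m-1=\frac{k(k+3)}{2}-1$.
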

 
 \begin{proof}
 We can calculate the class $E_k(n)$ by intersecting with test curves. First, we intersect $E_k(n)$ with $C_1(n)$. We have the intersection numbers $$E_k (n)\cdot C_1(n) = {n-1 \choose \frac{k(k+3)}{2}} k, \ \ \ H \cdot C_1(n) = 1, \ \ \ B \cdot C_1(n) = 0.$$ To determine the coefficient of $B$ we intersect $E_k(n)$ with the curve $C(n)$. We have the intersection numbers $$E_k(n) \cdot C(n) = { n-2 \choose \frac{k(k+3)}{2}-1}, \ \ \ H \cdot C(n) = 0, \ \ \ B \cdot C(n) = -2.$$ The class follows from these calculations.
 \end{proof}
 
We next consider a generalization of the divisors $D_k(n)$ introduced earlier.  We begin with a definition.  
 \begin{definition}
 A vector bundle $E$ of rank $r$ on $\PP^2$ \emph{satisfies interpolation for $n$ points} if the general $Z\in \PP^{2[n]}$ imposes independent conditions on sections of $E$, i.e. if $$h^0(E\otimes \cI_Z) = h^0(E)-rn.$$
 \end{definition}

Assume $E$ satisfies interpolation for $n$ points.  In particular, we have $h^0(E)\geq rn$.  Let $W\subset H^0(E)$ be a general fixed subspace of dimension $rn$.  A scheme $Z$ which imposes independent conditions on sections of $E$ will impose independent conditions on sections in $W$ if and only if  the subspace $H^0(E\otimes \cI_Z)\subset H^0(E)$ is transverse to $W$.  Thus, informally, we obtain a divisor $D_{E,W}(n)$ described as the locus of schemes which fail to impose independent conditions on sections in $W$.  We observe that the class of $D_{E,W}(n)$ will be independent of the choice of $W$, so we will drop the $W$ when it is either understood or irrelevant to the discussion. 

\begin{remark}
We can informally interpret $D_k(n)$ as the locus of schemes $Z$ such that $Z\cup Z'$ fails to impose independent conditions on curves of degree $k$, where $Z'$ is a sufficiently general fixed scheme of degree ${k+2\choose 2}-n$.  Choosing $W = H^0(\cI_{Z'}(k))\subset H^0(\OO_{\PP^2}(k))$, we observe that $D_k(n) = D_{\OO_{\PP^2}(k)}(n)$.
\end{remark}

To put the correct scheme structure on $D_{E,W}(n)$ and compute its class, consider the universal family $\Xi_n\subset \PP^{2[n]}\times \PP^2$, with projections $\pi_1,\pi_2$.  The locus of schemes which fail to impose independent conditions on sections of $W$ can be described as the locus where the natural map $$W\otimes \OO_{\PP^{2[n]}}\to \pi_{1\ast}(\pi_2^\ast(E)\otimes \OO_{\Xi_n}) =: E^{[n]}$$ of vector bundles of rank $rn$ fails to be an isomorphism.  Consequently, it has codimension at most $1$; since the general $Z$ imposes independent conditions on sections in $W$ it is actually a divisor.  Furthermore, its class (when given the determinantal scheme structure) is just $c_1(E^{[n]})$, which can be computed using the Grothendieck-Riemann-Roch Theorem 
$$\ch(E^{[n]}) = (\pi_1)_* \left(\ch (\pi_2^* (E)\otimes \OO_{\Xi_n}) \cdot \mathrm{Td} \left(\PP^{2[n]}\times \PP^2/\PP^{2[n]}\right)\right),$$ noting that the higher pushforwards of $\pi_2^\ast E \otimes \OO_{\Xi_n}$ all vanish.  A simple calculation and the preceding discussion results in the following proposition.

\begin{proposition}\label{interpClass}
Let $E$ be a vector bundle of rank $r$ on $\PP^2$ with $c_1(E) = aL$, where $L$ is the class of a line, and suppose $E$ satisfies interpolation for $n$ points.  The divisor $D_E(n)$ has class $aH-\frac{r}{2}B$.  Furthermore, the stable base locus of the divisor class $D_E(n)$ lies in the locus of schemes which fail to impose independent conditions on sections of $E$.
\end{proposition}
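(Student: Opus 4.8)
The plan is to compute $c_1(E^{[n]})$ via Grothendieck–Riemann–Roch using the setup already laid out before the proposition, and then to identify the stable base locus. First I would set up the GRR calculation on $\pi_1 \colon \PP^{2[n]} \times \PP^2 \to \PP^{2[n]}$. Because the higher pushforwards of $\pi_2^* E \otimes \OO_{\Xi_n}$ vanish (the general $Z$, indeed every $Z$ by semicontinuity once interpolation holds generically, imposes at most $rn$ conditions, and $H^1(\cI_Z \otimes E) $ can be controlled since $E$ is globally generated enough — in any case this vanishing is asserted in the text), we have $\ch(E^{[n]}) = (\pi_1)_* \big(\ch(\pi_2^* E) \cdot \ch(\OO_{\Xi_n}) \cdot \mathrm{Td}(\PP^{2[n]}\times\PP^2/\PP^{2[n]})\big)$. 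The relative tangent bundle is $\pi_2^* T_{\PP^2}$, so its relative Todd class is $\pi_2^*\mathrm{Td}(T_{\PP^2}) = \pi_2^*(1 + \tfrac{3}{2}L + L^2)$. The only ingredient that genuinely involves the geometry of $\PP^{2[n]}$ is $\ch(\OO_{\Xi_n})$; I would reduce to the case $E = \OO_{\PP^2}$ (so $E^{[n]} = \OO^{[n]}$ is the tautological rank-$n$ bundle) to fix the $\Xi_n$-contributions, using the known fact (Fogarty, or Lehn) that $c_1(\OO_{\PP^2}^{[n]}) = -\tfrac{B}{2}$, which pins down the relevant term of $(\pi_1)_*(\ch \OO_{\Xi_n} \cdot \pi_2^*\mathrm{Td})$ in degree $2$.

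Next, for general $E$ of rank $r$ with $c_1(E) = aL$: writing $\ch(\pi_2^* E) = r + a\,\pi_2^* L + (\tfrac{a^2}{2} - c_2(E))\pi_2^*L^2$ and expanding, the degree-$1$ part of $\ch(E^{[n]})$ on $\PP^{2[n]}$ picks up two contributions: an $a$-multiple of the pushforward of $[\Xi_n]\cdot \pi_2^* L$ type terms (which by the $R_{l_1}$, $R_{l_2}$ test-curve computations of Proposition~\ref{nef} is $aH$ up to the $B$-correction), and an $r$-multiple of the intrinsic $-\tfrac{1}{2}B$ term inherited from the $\OO^{[n]}$ case. Assembling, $c_1(E^{[n]}) = aH - \tfrac{r}{2}B$. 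Rather than grinding the GRR bookkeeping, I would actually run the cleaner argument: intersect $D_E(n)$ with the two test curves $C_1(n)$ (equivalently $R_{l_1}$) and $C(n)$ used repeatedly above. Along $R_{l_1}$, adding a general moving point $p$ on a line $l_1$ to a fixed general $Z'$ of length $n-1$, the bundle $H^0(E \otimes \cI_{Z' \cup p})$ drops rank by $r$ as $p$ moves, and the number of times the fixed general $W$ of dimension $rn$ fails transversality equals $c_1(E|_{l_1}) = a$; hence $D_E(n)\cdot R_{l_1} = a$, and since $R_{l_1}\cdot H = 1$, $R_{l_1}\cdot B = 0$, the $H$-coefficient is $a$. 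For the $B$-coefficient, I intersect with $C(n)$, the fiber of Hilbert–Chow over a general diagonal point: here $C(n)\cdot H = 0$ and $C(n)\cdot B = -2$, and a local computation at a length-$2$ non-reduced point (the tautological bundle restricted to this $\PP^1$ is $\OO \oplus \OO(-1)^{\oplus?}$ — more precisely one uses $\OO^{[n]}|_{C(n)}$ has degree $-1$, scaled by $r$) gives $D_E(n)\cdot C(n) = r$, forcing the $B$-coefficient to be $-\tfrac{r}{2}$. These two intersection numbers determine the class in the two-dimensional $N^1$.

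For the stable base locus statement: the divisor $D_{E,W}(n)$ is by construction the degeneracy locus of the bundle map $W \otimes \OO_{\PP^{2[n]}} \to E^{[n]}$, hence it is an effective divisor in the linear system $|c_1(E^{[n]})| = |D_E(n)|$ whose support is exactly the locus where $Z$ fails to impose independent conditions on $W$. Since the class $D_E(n)$ is represented by such an effective divisor for \emph{every} general choice of $W \subset H^0(E)$ of dimension $rn$, and a scheme $Z$ that imposes independent conditions on all of $H^0(E)$ imposes independent conditions on a general $rn$-dimensional $W$, the intersection of these divisors over all general $W$ is contained in $\{Z : Z \text{ fails interpolation for } E\}$. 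Therefore the base locus of $|D_E(n)|$, a fortiori the stable base locus $\bigcap_m \mathrm{Bs}|mD_E(n)|$, lies in the locus of schemes failing to impose independent conditions on sections of $E$. The main obstacle I anticipate is the $B$-coefficient: correctly normalizing the degree of the tautological bundle $\OO_{\PP^2}^{[n]}$ (or its rank-$r$ analogue $E^{[n]}$) along the Hilbert–Chow fiber $C(n)$ — i.e. the local analysis of the determinantal scheme structure at a non-reduced length-$2$ subscheme — is the delicate point, and one wants to be careful that the determinantal scheme structure on $D_{E,W}(n)$ really is reduced/generically reduced so that its class equals $c_1(E^{[n]})$ on the nose rather than a multiple; appealing to the $\OO_{\PP^2}(k)$ case where $D_k(n) = kH - \tfrac{B}{2}$ is already known from Proposition~\ref{nef}(1) is the safest way to fix this normalization.
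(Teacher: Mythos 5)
Your proposal is correct and shares the paper's essential framework: both identify $D_{E,W}(n)$ as the degeneracy locus of the map $W\otimes\OO_{\PP^{2[n]}}\to E^{[n]}=\pi_{1\ast}(\pi_2^\ast E\otimes\OO_{\Xi_n})$ of rank-$rn$ bundles, so that the class is $c_1(E^{[n]})$, and both deduce the base-locus statement from the observation that a scheme imposing independent conditions on $H^0(E)$ is avoided by $D_{E,W}(n)$ for a general choice of $W$. Where you diverge is in evaluating $c_1(E^{[n]})$: the paper leaves this as ``a simple calculation'' with Grothendieck--Riemann--Roch, whereas you extract the two coefficients by restricting to the test curves $C_1(n)$ and $C(n)$, exactly as the paper itself does for $D_k(n)$ in Proposition \ref{nef}. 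This is a legitimate and arguably more concrete route: $E^{[n]}|_{C_1(n)}$ is a trivial bundle plus $E|_{l_1}$, giving intersection number $a$, while $E^{[n]}|_{C(n)}$ is trivial plus $E_q\otimes(\OO\oplus\OO(1))$, giving intersection number $r$ and hence $B$-coefficient $-\frac{r}{2}$. One small slip: your parenthetical asserts that $\OO^{[n]}|_{C(n)}$ has degree $-1$; since $c_1(\OO^{[n]})=-\frac{B}{2}$ and $C(n)\cdot B=-2$, the degree is $+1$, which is consistent with the value $D_E(n)\cdot C(n)=r$ that you actually use, so nothing downstream is affected. Also, the vanishing of the higher direct images needs no appeal to interpolation or global generation: $\pi_2^\ast E\otimes\OO_{\Xi_n}$ is finite over $\PP^{2[n]}$, so $R^i\pi_{1\ast}=0$ for $i>0$ automatically.
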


We thus obtain many effective divisors that will help us understand the stable base locus  decomposition of $\PP^{2[n]}$. Since the discussion only depends on the rays spanned by these effective divisors, it is convenient to normalize their expressions so that the coefficient of $H$ is one. The divisors $D_k(n)$ give the rays $$H - \frac{B}{2k}.$$
The divisors $E_k(n)$ give the rays $$H - \frac{k+3}{4(n-1)} B.$$
Furthermore, whenever there exists a vector bundle $E$ on $\PP^2$ of rank $r$ with $c_1(E)=a L$ satisfying interpolation for $n$ points we have the divisor $D_E(n)$ spanning the ray $$H - \frac{r}{2a} B.$$

\section{The stable base locus decomposition of the effective cone of $\PP^{2[n]}$}\label{s-effective}

In this section, we collect facts about the effective cone of the Hilbert scheme $\PP^{2[n]}$ and study the general features of the stable base locus decomposition of $\PP^{2[n]}$. 
\medskip

Fix a point $p \in \PP^2$.  Let $U_n (p) \subset \PP^{2[n]}$ be the open subset parametrizing schemes whose supports do not contain the point $p$.  There is an embedding of $U_n(p)$ in $\PP^{2[n+1]}$ that associates to the scheme $Z \in U_n(p)$ the scheme $Z \cup p$ in $\PP^{2[n+1]}$. The induced rational map 
$$i_p: \PP^{2[n]} \dashrightarrow \PP^{2[n+1]}$$ gives rise to a homomorphism $$i_p^*: \mbox{Pic} (\PP^{2[n+1]}) \rightarrow \mbox{Pic}(\PP^{2[n]}).$$ Observe that $i_p^*(H) = H$ and $i_p^*(B)=B$. Hence, the map $i_p^*$ does not depend on the point $p$ and gives an isomorphism between the Picard groups. 

For the purposes of the next lemma, we identify the Neron-Severi space $N^1(\PP^{2[n]})$ with the vector space spanned by two basis elements labelled $H$ and $B$. Let $\mbox{Eff}(\PP^{2[n]})$ denote the image of the effective cone of $\PP^{2[n]}$ under this identification. We can thus view the effective cones of $\PP^{2[n]}$ for different $n$ in the same vector space.  Under this identification, we have the following inclusion.

\begin{lemma}\label{inclusion}
$\operatorname{Eff}(\PP^{2[n+1]}) \subseteq \operatorname{Eff}(\PP^{2[n]})$.
\end{lemma}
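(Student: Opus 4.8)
The plan is to show that if $D$ is an effective divisor on $\PP^{2[n+1]}$, then $i_p^* D$ is an effective divisor on $\PP^{2[n]}$ (with $i_p^*$ as above), so that pulling back effective classes realizes the claimed inclusion after the identification of Néron–Severi spaces. Since $i_p^*H = H$ and $i_p^*B = B$, pulling back does not move divisor classes in the $(H,B)$-coordinates, so producing an effective representative of $i_p^* D$ on $\PP^{2[n]}$ for each effective $D$ is exactly what is needed. Because the effective cone is generated by classes of effective divisors and taking pullbacks is linear, it suffices to handle a single effective divisor, or even just the (finitely many) generators of $\operatorname{Eff}(\PP^{2[n+1]})$.

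First I would fix an effective divisor $D$ on $\PP^{2[n+1]}$ and choose the point $p \in \PP^2$ generically so that $U_n(p)$ meets the support of $D$ properly — equivalently, so that the rational map $i_p : \PP^{2[n]} \dashrightarrow \PP^{2[n+1]}$ does not have image contained in $\operatorname{Supp}(D)$. This is possible: the image of $i_p$ sweeps out (as $p$ varies) a locus containing a dense subset of $\PP^{2[n+1]}$ — namely all reduced schemes containing a general point — so for generic $p$ the image is not contained in the fixed divisor $\operatorname{Supp}(D)$. Then the scheme-theoretic preimage $i_p^*(D)$, defined on the open set $U_n(p)$ where $i_p$ is a morphism (the complement of a codimension $\geq 2$ locus, since $U_n(p)$ is smooth and the indeterminacy locus of $i_p$ has codimension $\geq 2$), is an effective divisor on $U_n(p)$, and its closure in $\PP^{2[n]}$ is an effective divisor whose class is $i_p^*[D] = [D]$ under the coordinate identification. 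Taking closures is harmless because $\PP^{2[n]}$ is smooth (Theorem \ref{smooth}) so the divisor and its closure differ only in codimension $\geq 2$.

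Carrying this out for each generator of $\operatorname{Eff}(\PP^{2[n+1]})$ (or invoking linearity and continuity of $i_p^*$ on the whole cone) gives $\operatorname{Eff}(\PP^{2[n+1]}) \subseteq \operatorname{Eff}(\PP^{2[n]})$. I expect the main point requiring care to be the genericity argument: one must verify that for suitable $p$ the image of $i_p$ genuinely is not swallowed by the fixed divisor $\operatorname{Supp}(D)$, and that the indeterminacy locus of $i_p$ on $\PP^{2[n]}$ really has codimension at least two so that restricting to $U_n(p)$ and then taking closure does not change the divisor class. Both are elementary — the first because $\bigcup_p \operatorname{im}(i_p)$ is dense in $\PP^{2[n+1]}$, the second because the only schemes in $\PP^{2[n]}$ not in the domain of $i_p$ are those whose support contains $p$, a codimension-two condition — but they are where the actual content of the argument lies.
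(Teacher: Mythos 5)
Your proposal is correct and follows essentially the same route as the paper: choose $p$ generically so that the image of $i_p$ is not contained in $\operatorname{Supp}(D)$, pull back $D$ to get an effective divisor on $\PP^{2[n]}$, and use $i_p^*H=H$, $i_p^*B=B$ to conclude the class is unchanged. Your extra remarks on the codimension-two indeterminacy locus are correct and simply make explicit what the paper leaves implicit.
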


\begin{proof}
Let $D$ be an effective divisor on $\PP^{2[n+1]}$. Then $i_p^*(D)$ is a divisor class on $\PP^{2[n]}$. Since $i_p^*(H) = H$ and $i_p^*(B)=B$, under our identification, $D$ and $i_p^*(D)$ represent the same point in the vector space spanned by $H$ and $B$. The proof of the lemma is complete if we can show that $i_p^*(D)$ is the class of an effective divisor.  Let $p_1$ be a point such that there exists a scheme consisting of $n+1$ distinct, reduced points $p_1, \dots, p_{n+1}$ not contained in $D$. Therefore, $i_{p_1}(U_n(p_1)) \not\subset D$ and $D \cap i_{p_1}(U_n(p_1))$ is an effective divisor on $i_{p_1}(U_n(p_1))$. Since $i_p^*(D)$ does not depend on the choice of point $p$, we conclude that  $i_p^*(D)$ is the class of an effective divisor on $\PP^{2[n]}$. 
\end{proof}

\begin{remark}
Let $n_1 < n < n_2$. By Lemma \ref{inclusion}, if we know $\operatorname{Eff}(\PP^{2[n_1]})$ and $\operatorname{Eff}(\PP^{2[n_2]})$, then we bound $\operatorname{Eff}(\PP^{2[n]})$ both from above and below.
\end{remark}

We saw in Proposition \ref{interpClass} that if $E$ is a vector bundle of rank $r$ and $c_1(E) = aL$ satisfying interpolation for $n$ points, then we get an effective divisor on $\PP^{2[n]}$ with class $aH - \frac{r}{2}B$. In view of this, it is important to understand vector bundles that satisfy interpolation for $n$ points.
Recall from the introduction the definition of the set $\Phi$: $$\Phi = \{ \alpha \ | \ \alpha > \phi^{-1} \} \cup \left\{ \frac{0}{1}, \frac{1}{2}, \frac{3}{5}, \frac{8}{13}, \frac{21}{34}, \dots \right\}, \ \ \phi = \frac{1 + \sqrt{5}}{2}, $$ where $\phi$ is the golden ratio and the fractions are ratios of consecutive Fibonacci numbers. 
The following theorem of the fourth author guarantees the existence of vector bundles satisfying interpolation for $n$ points.

\begin{theorem}\cite[Theorem 4.1]{jack:thesis}\label{interpolation}
Let $$n = \frac{r(r+1)}{2} + s, \ \ s \geq 0.$$ Consider a general vector bundle $E$ given by the resolution $$0 \rightarrow \OO_{\PP^2}(r-2)^{\oplus ks} \rightarrow \OO_{\PP^2}(r-1)^{\oplus k(s+r)} \rightarrow E \rightarrow 0.$$
For sufficiently large $k$, $E$ is a vector bundle that satisfies interpolation for $n$ points if and only if $\frac{s}{r} \in \Phi$.
Similarly, let $F$ be a general vector bundle given by the resolution $$0 \rightarrow F \rightarrow \OO_{\PP^2}(r)^{\oplus k(2r-s+3)} \rightarrow \OO_{\PP^2}(r+1)^{\oplus k(r-s+1)} \rightarrow 0.$$ For sufficiently large $k$, $F$ has interpolation for $n$ points if and only if $ 1 - \frac{s+1}{r+2} \in \Phi$.
 \end{theorem}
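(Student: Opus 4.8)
\emph{Sketch of the strategy.} The plan is to reduce the interpolation statement to a single cohomological vanishing, to rephrase that vanishing as the invertibility of an explicit ``multiplication by a general matrix of linear forms'' map between cohomology groups of twisted ideal sheaves, and then to run a recursion on the pair $(r,s)$ whose combinatorics reproduces the continued-fraction description of $\Phi$ (we restrict to $0\leq s\leq r$, to which the general case reduces). \textbf{Step 1 (reduction to a vanishing for one scheme).} A computation with the two resolutions gives $\rk E = kr$, $c_1(E)=k\big(r(r-1)+s\big)L$, $h^0(E)=(\rk E)\,n$ and $h^i(E)=0$ for $i>0$, and likewise $\rk F=k(r+2)$, $h^0(F)=(\rk F)\,n$, $h^i(F)=0$ for $i>0$. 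In particular $\chi(E\otimes\cI_Z)=0$ and $h^2(E\otimes\cI_Z)=h^2(E)=0$, so $E$ satisfies interpolation for $n$ points if and only if $h^0(E\otimes\cI_Z)=0$ for general $Z$, equivalently $h^1(E\otimes\cI_Z)=0$ for general $Z$; and similarly for $F$. By upper-semicontinuity of $h^0(E\otimes\cI_Z)$ over $\PP^{2[n]}$ (and over the family of presentation matrices), the ``if'' direction is reduced to exhibiting \emph{one} scheme $Z_0\in\PP^{2[n]}$ and one presentation matrix with locally free cokernel for which the vanishing holds.

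\textbf{Step 2 (rephrasing as invertibility of a multiplication map).} Since $E$ is locally free, tensoring its resolution by $\cI_Z$ stays exact:
\[
0\to \cI_Z(r-2)^{\oplus ks}\to \cI_Z(r-1)^{\oplus k(s+r)}\to E\otimes\cI_Z\to 0 .
\]
For general $Z$ one has $h^0(\cI_Z(r-1))=h^0(\cI_Z(r-2))=0$, so the long exact sequence identifies $H^0(E\otimes\cI_Z)$ with the kernel of the multiplication map
\[
\alpha\colon H^1(\cI_Z(r-2))^{\oplus ks}\longrightarrow H^1(\cI_Z(r-1))^{\oplus k(s+r)}
\]
induced by the general matrix of linear forms defining $E$. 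One computes $\dim H^1(\cI_Z(r-2))=r+s$ and $\dim H^1(\cI_Z(r-1))=s$, so both sides have dimension $ks(r+s)$; hence $E$ has interpolation for $n$ points \emph{iff} $\alpha$ is an isomorphism for general $Z$ and general matrix. The identical manipulation applied to the co-resolution of $F$ reduces interpolation for $F$ to the invertibility of an analogous square map built from $H^0(\cI_Z(r))$ and $H^0(\cI_Z(r+1))$.

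\textbf{Step 3 (the recursion).} To decide when the general $\alpha$ is an isomorphism, specialize $Z$ by the Horace method: put $j$ of its points on a line $\ell$ and keep the rest general off $\ell$, so $Z=Z_\ell\sqcup Z'$. The sequence $0\to\cI_{Z'}(-1)\xrightarrow{\cdot\ell}\cI_Z\to\cI_{Z_\ell/\ell}\to 0$ (with $\cI_{Z_\ell/\ell}\cong\OO_\ell(-j)$ the ideal sheaf of $Z_\ell$ in $\ell$), tensored with $E$, becomes
\[
0\to E(-1)\otimes\cI_{Z'}\to E\otimes\cI_Z\to E|_\ell\otimes\OO_\ell(-j)\to 0,
\]
and for general $\ell$ the bundle $E|_\ell$ has the balanced splitting type $\OO_\ell(r)^{\oplus ks}\oplus\OO_\ell(r-1)^{\oplus k(r-s)}$. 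Choosing $j$ in the correct range makes the cohomology of the third term controllable and reduces the vanishing for $E$ on $n$ points to an interpolation problem for a bundle of strictly smaller invariants (a shifted Steiner bundle) on the $n-j$ general points $Z'$. Iterating, the ratio $s/r$ evolves exactly by the slow Euclidean (continued-fraction) step underlying the definition of $\Phi$, and the process terminates either in the region $\alpha>\phi^{-1}$, where the relevant bundle is balanced enough to conclude directly, or at a Fibonacci ratio $F_{2i}/F_{2i+1}$, which is handled as an explicit base case. (Alternatively, one may bypass line degenerations and instead use the minimal free resolution of a general set of points in $\PP^2$---the Gaeta resolution, whose minimality is the Minimal Resolution Conjecture, known in the plane---to compute the module $\bigoplus_d H^1(\cI_Z(d))$ together with its $H^0(\OO_{\PP^2}(1))$-action, and thereby bring $\alpha$ into a block-triangular form whose diagonal blocks are smaller instances of the same problem.)

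\textbf{Main obstacle.} The genuinely hard point is the sharp ``only if'' direction: when $s/r\notin\Phi$ one must prove that $h^0(E\otimes\cI_Z)>0$ for \emph{every} $Z$, i.e.\ produce a canonical sub- or quotient structure of $E$ (or of the cohomology module $\bigoplus_d H^1(\cI_Z(d))$) that forces extra global sections; the golden-ratio threshold is precisely where such an obstruction first appears, which is why $\phi^{-1}$ enters the answer. The remaining delicate points are the Fibonacci base cases, where the recursion is tight, and checking that ``$k$ sufficiently large'' makes all the general-position and maximal-rank statements used along the way valid; everything else is bookkeeping with the numerical invariants of Step 1.
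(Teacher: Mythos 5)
First, a point of context: the paper does not prove this theorem at all --- it is quoted verbatim from \cite[Theorem 4.1]{jack:thesis}, so there is no internal proof to measure your argument against. Your Steps 1 and 2 are correct and are indeed the standard reduction: the numerical bookkeeping checks out ($\rk E = kr$, $c_1(E)=k(r^2-r+s)L$, $h^0(E)=\rk(E)\cdot n$, $h^1(\cI_Z(r-2))=r+s$, $h^1(\cI_Z(r-1))=s$, so $\alpha$ is a square map), and the identification $H^0(E\otimes\cI_Z)=\ker\alpha$ from the long exact sequence is right, as is the semicontinuity reduction of the ``if'' direction to a single $(Z,\,\text{matrix})$ pair.

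The genuine gap is that Step 3 is a program, not a proof, and it is precisely where the entire content of the theorem lives. Concretely: (i) you never carry out the Horace recursion --- you do not specify $j$, do not verify that the cohomology of $E|_\ell\otimes\OO_\ell(-j)$ actually vanishes in the needed degrees, and do not check that the residual problem on $Z'$ is again an instance of the same interpolation problem with the slope $s/r$ transformed by the claimed Euclidean step; (ii) the assertion that a \emph{general} Steiner bundle restricts to the balanced splitting $\OO_\ell(r)^{\oplus ks}\oplus\OO_\ell(r-1)^{\oplus k(r-s)}$ on a general line is itself a nontrivial theorem (it is essentially the subject of the cited thesis) and cannot be assumed; (iii) the base cases at the even-index Fibonacci ratios, where the inductive inequalities are tight, are exactly the delicate part and are waved at rather than done; and (iv) the ``only if'' direction --- producing, for $s/r\notin\Phi$, a destabilizing substructure forcing $h^0(E\otimes\cI_Z)>0$ for \emph{every} $Z$ --- is explicitly left open by you, yet it is half of the statement and is what makes the golden-ratio threshold appear. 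As written, the proposal establishes the framework in which the theorem is proved but none of the inequalities or degenerations that decide when $\alpha$ is an isomorphism, so it does not constitute a proof of either implication.
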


\begin{remark}
In Theorem \ref{interpolation}, the rank of $E$ is $kr$ and $c_1(E) = k (r^2 - r + s)L$. Hence, the corresponding effective divisor $D_E(n)$ on $\PP^{2[n]}$ lies on the ray $H - \frac{r}{2(r^2-r+s)}B$. Similarly, the rank of $F$ is $k(r+2)$ and $c_1(F) = k (r^2 + r+s-1)L$. Hence, the divisor $D_F(n)$ on $\PP^{2[n]}$ lies on the ray $H - \frac{r+2}{2(r^2 +r+s-1)}B.$
\end{remark}

A consequence of Theorem \ref{interpolation} is the following theorem that determines the effective cone of $\PP^{2[n]}$ in a large number of cases.

\begin{theorem}\label{effective}
Let $$n = \frac{r(r+1)}{2} + s, \ \ 0 \leq s \leq r.$$
\begin{enumerate}
\item If $\frac{s}{r} \in \Phi$, then the effective cone of $\PP^{2[n]}$ is the closed cone bounded by the rays $$H - \frac{r}{2(r^2 -r+s)}B\ \  \mbox{and} \ \ B.$$
\item If  $1 - \frac{s+1}{r+2} \in \Phi$ and $s \geq 1$, then the effective cone of $\PP^{2[n]}$ is the closed cone bounded by the rays $$H - \frac{r+2}{2(r^2+r+s-1)}B \ \ \mbox{and} \ \ B.$$
\end{enumerate}
\end{theorem}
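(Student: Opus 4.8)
The plan is to split the statement into two bounds: a lower bound on the effective cone (exhibiting effective divisors on the two extreme rays) and an upper bound (showing no effective divisor lies outside the cone they span). The lower bound is essentially done already. The boundary divisor $B$ is effective by definition. For the other ray: in case (1), the hypothesis $\frac{s}{r}\in\Phi$ lets us invoke Theorem \ref{interpolation} to produce, for $k$ large, a vector bundle $E$ of rank $kr$ with $c_1(E)=k(r^2-r+s)L$ satisfying interpolation for $n$ points; Proposition \ref{interpClass} then gives an effective divisor $D_E(n)$ on the ray $H-\frac{r}{2(r^2-r+s)}B$. In case (2), the hypothesis $1-\frac{s+1}{r+2}\in\Phi$ together with $s\geq 1$ gives, via Theorem \ref{interpolation}, a bundle $F$ of rank $k(r+2)$ with $c_1(F)=k(r^2+r+s-1)L$, hence an effective divisor $D_F(n)$ on the ray $H-\frac{r+2}{2(r^2+r+s-1)}B$. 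So in each case the claimed cone is contained in $\mathrm{Eff}(\PP^{2[n]})$.

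The real content is the reverse inclusion: no effective divisor class can lie strictly below the ray $D_E(n)$ (resp. $D_F(n)$). The strategy is to exhibit, for each such candidate boundary ray, a covering family of curves in $\PP^{2[n]}$ whose members sweep out all of $\PP^{2[n]}$ and which have negative intersection with any purported effective divisor lying below the ray — since an irreducible effective divisor meeting a covering curve in negative degree would have to contain the curve, hence all of $\PP^{2[n]}$, a contradiction. Concretely, I would work out the numerical class $D$ with $D\cdot C=0$ for the relevant curve class $C$ and then argue that moving slightly to the $B$-side of $D$ forces negative intersection with the covering family. For case (1), the natural curves to use are the ``Brill--Noether'' curves: fix a general pencil inside the space of sections of $E$ and let $Z$ vary over the schemes failing to impose independent conditions on that pencil; alternatively, and more elementarily, use the curves $C_r(n)$ and $C(n)$ introduced above, suitably chosen so that their span is dual to the claimed boundary ray. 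The point is that interpolation for $E$ is sharp precisely at $n=\frac{r(r+1)}{2}+s$, so the dual extremal curve becomes a covering curve exactly in this range; this is where the arithmetic hypothesis $0\leq s\leq r$ enters, ensuring $n$ sits in the ``critical'' window for the bundle of that rank.

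The main obstacle is establishing that the relevant curve class is genuinely a covering family and computing its intersection with $H$ and $B$ precisely enough to pin down the dual ray. For this I expect to need the dual formulation of interpolation: the general $Z\in\PP^{2[n]}$ imposes independent conditions on $H^0(E)$, but for the divisor $D_E(n)$ to span an \emph{extremal} ray one needs that through a general point of $\PP^{2[n]}$ there passes a curve contracted by the map $\phi_E:\PP^{2[n]}\dashrightarrow G$ to the relevant Grassmannian — equivalently, that the general fiber of $\phi_E$ (or of a nearby map) is positive-dimensional when $n$ is at the critical value. I would deduce this from the resolution of $E$ in Theorem \ref{interpolation}: the kernel/cokernel description gives explicit maps $\OO_{\PP^2}(r-1)^{\oplus N}\to E$ whose degeneracy loci produce the sweeping curves, and a dimension count using $h^0(E\otimes\cI_Z)=h^0(E)-rn$ at the general point versus the jump in the special locus yields the covering property. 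Once the covering curve $C$ is in hand with $C\cdot H$ and $C\cdot B$ computed, the extremality of $D_E(n)$ (resp. $D_F(n)$) is immediate, and combined with the lower bound this gives equality of the two cones. The secondary subtlety is checking that $B$ itself is extremal — but this is standard: $B$ is contracted by the Hilbert--Chow morphism, and the curve $C(n)$ with $C(n)\cdot H=0$, $C(n)\cdot B=-2$ is a covering curve for $B$, so no effective divisor can lie on the far side of $B$ either.
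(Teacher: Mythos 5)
Your overall framework is the paper's: the lower bound comes from $B$ together with $D_E(n)$ (resp.\ $D_F(n)$) via Theorem \ref{interpolation} and Proposition \ref{interpClass}, and the upper bound should come from a moving (covering) curve class dual to the candidate extremal ray. That part is fine. The gap is that you never actually produce the dual moving curve, and the candidates you float would not work. The curves $C_r(n)$ are \emph{not} moving for $r\geq 3$ (they require $r$ collinear points, a non-general configuration), and a formal linear combination of $C_r(n)$ and $C(n)$ dual to the ray $H-\frac{r}{2(r^2-r+s)}B$ has no reason to be represented by irreducible curves through a general point. Likewise, ``degeneracy loci of the maps in the resolution of $E$'' does not obviously yield a one-parameter family with computable intersection numbers against $H$ and $B$; interpolation for $E$ gives you the divisor, but extremality of that divisor is not a formal consequence of the map $\phi_E$ having positive-dimensional fibers at the critical $n$ --- you still have to exhibit the contracted covering curve and compute its class.

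The construction you are missing is elementary but specific: since $0\leq s\leq r$, a general $Z$ of length $n=\frac{r(r+1)}{2}+s$ lies on a smooth plane curve $C$ of degree $r$; by Riemann--Roch, $h^0(C,\OO_C(D_Z))\geq 2r+s\geq 2$, so a general pencil in $|D_Z|$ through $Z$ traces a curve $R\subset \PP^{2[n]}$ with $R\cdot H=r$ (the degree of $C$) and $R\cdot B=2(r^2-r+s)$ (the degree of the ramification divisor of $\phi_P:C\to\PP^1$, by Riemann--Hurwitz). Then $R\cdot D_E(n)=0$ and $R$ passes through the general point of $\PP^{2[n]}$, which gives extremality. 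Case (2) is identical with curves of degree $r+2$, and this is exactly where the hypothesis $s\geq 1$ enters (to guarantee $h^0\geq s+1\geq 2$); note your proposal never explains why $s\geq 1$ is needed in case (2). Finally, for the extremality of $B$ you should use the moving curve $C_1(n)$, which satisfies $C_1(n)\cdot B=0$; the curve $C(n)$ only sweeps out $B$ itself, so by itself it only shows $B$ is in the base locus of classes with positive $B$-coefficient, not that $B$ spans an edge of the effective cone.
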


\begin{proof}
The proof of this theorem relies on the idea that the cone of moving curves is dual to the effective cone. Recall that a curve class $C$ on a variety $X$ is called moving if irreducible curves in the class $C$ cover  a Zariski open set in $X$. If $C$ is a moving curve class and $D$ is an effective divisor, then $C \cdot D \geq 0$. Thus each moving curve class gives a bound on the effective cone. To determine the effective cone, it suffices to produce two rays spanned by effective divisor classes and corresponding moving curves that have intersection number zero with these effective divisors. 

The divisor class $B$ is the class of the exceptional divisor of the Hilbert-Chow morphism. Consequently, it is effective and extremal. Alternatively, to see that $B$ is extremal, notice that $C_1(n)$ is a moving curve with $C_1(n)\cdot B = 0$.  Hence, $B$ is an extremal ray of the effective cone.

The other extremal ray of the effective cone is harder to find. Observe that when $\frac{s}{r} \in \Phi$, Theorem \ref{interpolation} constructs an effective divisor $D_E(n)$ along the ray $H - \frac{r}{2(r^2-r + s)} B.$ Hence, the effective cone contains the cone generated by $B$ and $H - \frac{r}{2(r^2 -r + s)} B.$

Let $Z$ be a general scheme of dimension zero and length $n$.  The dimension of the space of curves of degree $r$ in $\PP^2$ is $\frac{r(r+3)}{2}$. Since a general collection of simple points impose independent conditions on curves of degree $r$ and $$\frac{r(r+3)}{2} - \frac{r(r+1)}{2} -s = r-s \geq 0,$$ $Z$ is contained in a smooth curve $C$ of degree $r$. The scheme $Z$ defines a divisor $D_Z$ on the smooth curve $C$ of degree $n$. The genus of $C$ is $\frac{(r-1)(r-2)}{2}$. Therefore, by the Riemann-Roch Theorem $$h^0(C, \OO_C(D_Z)) \geq \frac{r(r+1)}{2} +s - \frac{(r-1)(r-2)}{2} + 1= 2r+s \geq 2.$$ Let $P$ be a general pencil in $H^0(C, \OO_C(D_Z))$ containing $Z$. The corresponding divisors of degree $n$ on $C$ induce a curve $R$ in the Hilbert scheme $\PP^{2[n]}$. We then have the following intersection numbers: $$R \cdot H = r, \ \ \mbox{and} \ \ R \cdot B = 2(r^2-r+s).$$ The first intersection number is clear since it equals the degree of the curve $C$. The second intersection number can be computed using the Riemann-Hurwitz formula. It is easy to see that $R \cdot B$ is the degree of the ramification divisor of the map $\phi_P : C \rightarrow \PP^1$ induced by the pencil $P \subset H^0(C, \OO_C(D_Z))$. The Riemann-Hurwitz formula implies that this degree is $$2n + (r-1)(r-2) -2 = 2 (r^2-r+s). $$ We conclude that $R \cdot D_E(n) = 0$. Since $Z$ was a general point of $\PP^{2[n]}$ and we constructed a curve in the class $R$ containing $Z$, we conclude that $R$ is a moving curve class. Therefore, the effective divisor $D_E(n)$ is extremal in the effective cone. We deduce that the effective cone is equal to the cone spanned by $B$ and $H - \frac{r}{2(r^2 -r + s)} B.$

When $1- \frac{s+1}{r+2} \in \Phi$, then by Theorem \ref{interpolation}, $D_F(n)$ is an effective divisor along the ray $H- \frac{r+2}{2(r^2 + r + s -1)}B$. Therefore, the effective cone contains the cone generated by  the rays $H- \frac{r+2}{2(r^2 + r + s -1)}B$ and $B$. 

Let $Z$ be a general scheme of dimension zero and length $n$. The dimension of the space of curves of degree $r+2$ in $\PP^2$ is $\frac{(r+2)(r+5)}{2}$. Since a general collection of simple points impose independent conditions on curves of degree $r+2$ and  $$\frac{(r+2)(r+5)}{2} - \frac{r(r+1)}{2} -s = 3r+5-s \geq 0,$$ $Z$ is contained in a smooth curve $C$ of degree $r+2$. The scheme $Z$ defines a divisor $D_Z$ of degree $n$ on $C$. The genus of $C$ is $\frac{r(r+1)}{2}$. By the Riemann-Roch Theorem, 
$$h^0(C, \OO_C(D_Z)) \geq \frac{r(r+1)}{2} +s - \frac{r(r+1)}{2} + 1= s+1 \geq 2, $$  provided $s \geq 1$. Let $P$ be a general pencil in  $H^0(C, \OO_C(D_Z))$ containing $Z$. The corresponding divisors of degree $n$ on $C$ induce a curve $R$ in the Hilbert scheme $\PP^{2[n]}$. We then have the following intersection numbers: $$R \cdot H = r+2, \ \ \mbox{and} \ \ R \cdot B =2(r^2 + r + s -1).$$ Here the second number is equal to the degree of the ramification divisor of the map $\phi_P: C \rightarrow \PP^1$ induced by the pencil $P \subset H^0(C, \OO_C(D_Z))$ and is computed by the Riemann-Hurwitz formula. We conclude that $R \cdot D_F(n) = 0$. Since we constructed a curve in the class $R$ containing a general point $Z$, we conclude that $R$ is a moving curve class. Therefore, the effective divisor $D_F(n)$ is extremal in the effective cone. We deduce that the effective cone is equal to the cone spanned by $B$ and $H - \frac{r+2}{2(r^2 + r + s-1)} B.$ This concludes the proof of the theorem.
\end{proof}

\begin{remark}
Several special cases of Theorem \ref{effective} are worth highlighting since the divisors have more concrete descriptions. 
\smallskip

\noindent {$\bullet$} When $n = \frac{r(r+1)}{2}$, then the divisor $E_{r-1}(n)$ parameterizing zero dimensional schemes of length $n$ that fail to impose independent conditions on sections of $\OO_{\PP^2}(r-1)$ is an effective divisor on the extremal ray  $H - \frac{B}{2(r-1)}$. Hence, the effective cone is the cone generated by $H - \frac{B}{2(r-1)}$ and $B$.
\smallskip

\noindent $\bullet$ When $n= \frac{r(r+1)}{2}-1$, by Lemma \ref{inclusion}, the effective cone contains the cone generated by $H - \frac{B}{2(r-1)}$ and $B$. By Theorem \ref{effective}, the effective cone is equal to the cone spanned by $H- \frac{B}{2(r-1)}$ and $B$. Hence, the extremal ray of the effective cone in this case is generated by the pull-back of $E_{r-1}(n+1)$ under the rational map $i_p: \PP^{2[n]} \dashrightarrow \PP^{2[n+1]}$. 
\smallskip

\noindent $\bullet$ When $n = \frac{r(r+1)}{2} + 1$, then the divisor $E_{r-1}(n)$ parameterizing zero-dimensional subschemes of length $n$ that have a subscheme of length $n-1$ that fails to impose independent conditions on sections of $\OO_{\PP^2}(r-1)$ is an effective divisor on the ray $H - \frac{r+2}{4(n-1)}B$. Hence, the effective cone is the cone generated by $E_{r-1}(n)$ and $B$.
\smallskip

\noindent $\bullet$ When $n = \frac{(r+1)(r+2)}{2}-2=\frac{r(r+1)}{2}+(r-1)$ with $r\geq 3$, a general collection of $n$ points lies on a pencil of curves of degree $r$.  The base locus of this pencil consists of $r^2$ points, and we obtain a rational map $\PP^{2[n]}\dashrightarrow \PP^{2[r^2-n]}$ sending $n$ points to the $r^2-n$ points residual to the $n$ in the base locus of this pencil.  The pull-back of $H$ under this map gives an effective divisor spanning the extremal ray $H - \frac{r}{2(r^2-1)}B$.

\noindent $\bullet$ When $n = \frac{r(r+1)}{2} + \frac{r}{2}$ with $r$ even, we can take the vector bundle in the construction of the extremal ray of the effective cone to be a twist of the tangent bundle $T_{\PP^2}(r-2)$. The corresponding divisor lies on the extremal ray $H - \frac{1}{2r-1} B$. In this case, the effective cone is the cone generated by $H - \frac{1}{2r-1} B$ and $B$.  
\end{remark}

\begin{remark}
Theorem \ref{effective} determines the effective cone of $\PP^{2[n]}$ for slightly more than three quarters of all values of $n$. In order to extend the theorem to other values of $n= \frac{r(r+1)}{2} + s$, one has to consider interpolation for more general bundles. For example, one may consider bundles of the form $$0 \rightarrow \OO_{\PP^2}(r-3)^{\oplus s} \rightarrow \OO_{\PP^2}(r-1)^{\oplus 2r+s-1} \rightarrow E \rightarrow 0.$$ Provided that $2s<r$ and either $$ \sqrt{2}-1 <\frac{s}{r - \frac{1}{2}}  \ \ \mbox{or} \ \ \frac{s}{r - \frac{1}{2}} $$  is a convergent of the continued fraction expansion of $\sqrt{2}-1$, it is reasonable to expect that $E$ satisfies interpolation and the divisor $D_E(n)$ spans the extremal ray of the effective cone. Dually, consider the bundle $$0 \rightarrow F \rightarrow \OO_{\PP^2}(r)^{\oplus 3r-s+6} \rightarrow \OO_{\PP^2}(r+2)^{\oplus r-s+1} \rightarrow 0.$$ Provided that $2s>r$ and either $$\frac{r-s+1}{r+\frac{5}{2}} > \sqrt{2}-1 \ \ \mbox{or}  \ \ \frac{r-s+1}{r+\frac{5}{2}}$$ is a convergent of the continued fraction expansion of $\sqrt{2}-1$, one expects $F$ to satisfy interpolation and the corresponding divisor $D_F(n)$ to span the extremal ray of the effective cone of $\PP^{2[n]}$. Unfortunately, at present interpolation does not seem to be known for these bundles. 
\end{remark}

While the bounds may not be sharp for $n$ not covered by Theorem \ref{effective}, the proof still shows the following corollary. 

 \begin{corollary}[A Cayley-Bacharach Theorem for higher rank vector bundles on $\PP^2$]\label{cayley}
Let $n=\frac{r(r+1)}{2} + s$ with $0 \leq s \leq r$. 
\begin{enumerate}
\item If $\frac{s}{r} \geq \frac{1}{2}$, then the effective cone of $\PP^{2[n]}$ is contained in the cone generated by $H - \frac{r}{2(r^2-r+s)}B$ and $B$.
\item If $\frac{s}{r} < \frac{1}{2}$, then the effective cone of $\PP^{2[n]}$ is contained in the cone generated by $H - \frac{r+2}{2(r^2+r+s-1)}B$ and $B$.
\end{enumerate}
Let $E$ be a vector bundle on $\PP^2$ with rank $k$ and $c_1(E)= aL$. If 
$$\frac{k}{a} > \frac{r}{r^2 - r + s} \ \  \mbox{when} \ \  \frac{s}{r} \geq \frac{1}{2}, \ \  \mbox{or}  \ \ 
\frac{k}{a} > \frac{r+2}{r^2 +r + s-1}\ \  \mbox{when}  \ \ \frac{s}{r} < \frac{1}{2},$$ 
then $E$ cannot satisfy interpolation for $n$ points.  That is, every $Z\in \PP^{2[n]}$ fails to impose independent conditions on sections of $E$.
\end{corollary}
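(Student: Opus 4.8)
The plan is to note that the two moving-curve constructions in the proof of Theorem \ref{effective} make no use of the hypotheses $\frac sr\in\Phi$ or $1-\frac{s+1}{r+2}\in\Phi$; those hypotheses were needed only to produce the \emph{matching} extremal effective divisor. The moving curve $R$ of case (1) is obtained by taking a general $Z\in\PP^{2[n]}$, a smooth degree-$r$ curve $C\supset Z$ (which exists because $r-s\geq 0$), and the family of divisors cut on $C$ by a pencil in $H^0(C,\OO_C(D_Z))$ (which exists because $h^0(C,\OO_C(D_Z))\geq 2r+s\geq 2$); it is a moving class, since $Z$ is general, and satisfies $R\cdot H=r$, $R\cdot B=2(r^2-r+s)$. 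Similarly the class of case (2), built from a smooth degree-$(r+2)$ curve, is a moving class whenever $s\geq 1$ and satisfies $R\cdot H=r+2$, $R\cdot B=2(r^2+r+s-1)$. Since a moving curve pairs nonnegatively with every effective divisor, any effective class of the form $H-tB$ satisfies $t\leq\frac{r}{2(r^2-r+s)}$ from the first curve and $t\leq\frac{r+2}{2(r^2+r+s-1)}$ from the second (effective classes proportional to $B$ lie in either cone trivially). Comparing the two fractions shows the first bound is the sharper one exactly when $\frac sr\geq\frac12$ and the second when $\frac sr<\frac12$, giving parts (1) and (2).

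For the bundle statement I would argue by contradiction: suppose $E$ has rank $k$ and $c_1(E)=aL$, that $\frac ka$ strictly exceeds the relevant threshold, and that $E$ satisfies interpolation for $n$ points. Since the thresholds are positive and $k\geq 1$, this forces $a>0$. By Proposition \ref{interpClass}, $D_E(n)=aH-\frac k2 B$ is an effective divisor, so $H-\frac{k}{2a}B$ spans an effective ray; but then parts (1) and (2) give $\frac ka\leq\frac{r}{r^2-r+s}$ or $\frac ka\leq\frac{r+2}{r^2+r+s-1}$ according as $\frac sr\geq\frac12$ or $\frac sr<\frac12$, contradicting the choice of $\frac ka$.

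To upgrade ``$E$ fails interpolation'' to ``\emph{every} $Z\in\PP^{2[n]}$ fails to impose independent conditions on sections of $E$'', I would use semicontinuity: the sequence $0\to E\otimes\cI_Z\to E\to E\otimes\OO_Z\to 0$ gives $h^0(E\otimes\cI_Z)\geq h^0(E)-kn$ for all $Z$, with equality exactly when $Z$ imposes independent conditions. The locus of such $Z$ is therefore open in $\PP^{2[n]}$, and since $\PP^{2[n]}$ is irreducible (Theorem \ref{smooth}) it is either empty or dense; failure of interpolation says it is not dense, hence empty.

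The only real thing to check is that the moving-curve constructions of Theorem \ref{effective} genuinely transfer without the Fibonacci-type conditions --- they do, by the elementary dimension and Riemann--Roch counts recalled above --- after which the argument is purely formal. (One caveat: the case-(2) curve needs $s\geq 1$; when $s=0$ one falls back on the case-(1) bound, which there already reads $H-\frac{1}{2(r-1)}B$.)
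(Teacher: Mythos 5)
Your proof is correct and is essentially the paper's own argument: the paper justifies this corollary only by the remark that the moving-curve constructions in the proof of Theorem \ref{effective} do not use the Fibonacci-type hypotheses, and you have unpacked exactly that, together with the (routine but unstated) semicontinuity-plus-irreducibility step for the final assertion about every $Z$. Your caveat about $s=0$ is well taken --- part (2) as literally stated would contradict Theorem \ref{effective}(1) when $s=0$, so the intended hypothesis there is indeed $s\geq 1$, and your fallback to the case-(1) bound is the right fix.
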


We now turn our attention to describing some general features of the stable base locus decomposition of $\PP^{2[n]}$. 

\begin{notation}
We denote the closed cone in the Neron-Severi space generated by two divisor classes $D_1, D_2$ by $[D_1, D_2]$. We denote the interior of this cone by $(D_1, D_2)$. We use $[D_1, D_2)$ and $(D_1, D_2]$ to denote the semi-closed cones containing the ray spanned by $D_1$ and, respectively, $D_2$.
\end{notation}

\begin{proposition}\label{BH}
The cone $(H,B]$ forms a chamber of the stable base locus decomposition of $\PP^{2[n]}$. Every divisor in this chamber has stable base locus $B$.
\end{proposition}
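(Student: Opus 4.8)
\emph{Strategy.} The plan is to compute the stable base locus of a general class $D$ in $(H,B]$ from both sides, showing it contains $B$ via a test curve of negative degree, and is contained in $B$ by factoring $D$ through a base-point-free class. Write $D = aH + bB$ with $a\geq 0$ and $b>0$, the edge ray being $a=0$; since $D$ lies in the cone on the two effective classes $H$ and $B$ it is effective, and as the stable base locus only depends on the ray spanned by $D$ I may rescale so that $a,b\in\ZZ_{\geq 0}$.

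\emph{$B$ lies in the stable base locus.} Here I would use the curve class $C(n)$, the fiber of the Hilbert--Chow morphism $h$ over a general point of the diagonal, for which $C(n)\cdot H = 0$ and $C(n)\cdot B = -2$; thus $mD\cdot C(n) = -2mb < 0$ for all $m\geq 1$. Since an irreducible curve meeting an effective divisor negatively must be a component of it, every member of $|mD|$ contains every curve in the class $C(n)$. As the point of the diagonal varies, these rational curves sweep out a dense subset of the irreducible divisor $B$ (a general point of $B$ is reduced away from a single length-two fat point, whose varying tangent direction traces out $C(n)$), so the base locus of $|mD|$, being closed, contains $B$. Intersecting over $m$ shows that $B$ is contained in the stable base locus of $D$.

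\emph{The stable base locus is contained in $B$.} The class $H$ is base-point-free, being the pullback under $h$ of the ample generator of $\PP^{2(n)}$ (see the proof of Corollary \ref{nef-cone}), hence $aH$ is base-point-free. Writing $s_B$ for a section of $\OO_{\PP^{2[n]}}(B)$ with zero locus $B$, multiplication by $s_B^b$ embeds $H^0(aH)$ into $H^0(D)$; given $x\notin B$, a section of $aH$ not vanishing at $x$ multiplied by $s_B^b$ gives a section of $D$ not vanishing at $x$. Hence the base locus of $|D|$ is contained in $B$, and likewise that of $|mD| = |(ma)H+(mb)B|$ for every $m$. Combined with the previous paragraph, the stable base locus of every $D\in(H,B]$ equals $B$.

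\emph{This is a chamber.} It remains to see that $(H,B]$ is maximal with this property. Crossing the ray $H$ one enters the nef cone, which by Corollary \ref{nef-cone} coincides with the base-point-free cone, so the stable base locus there is empty; and on the other side $B$ spans an extremal ray of the effective cone (it is the $h$-exceptional divisor; alternatively $C_1(n)$ is a moving curve with $C_1(n)\cdot B = 0$), so nothing effective lies beyond it. Thus $(H,B]$ is exactly a chamber of the stable base locus decomposition, with stable base locus $B$. The only input that is not purely formal is the density of the union of the $C(n)$-curves in $B$, which is where one genuinely uses the local geometry of $\PP^{2[n]}$ along the diagonal; I expect that to be the single point needing care.
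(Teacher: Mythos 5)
Your proof is correct and follows essentially the same route as the paper: the containment of the stable base locus in $B$ via base-point-freeness of $H$, and the reverse containment via the dual curve $C(n)$ sweeping out $B$. The extra details you supply (the section-multiplication argument and the verification that $(H,B]$ is maximal) are sound elaborations of what the paper leaves implicit.
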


\begin{proof}
Since the divisor class $H$ is the pull-back of the ample generator from the symmetric product $\PP^{2(n)}$, it is base point free. Every divisor $D$ in the chamber $(H, B]$ is a non-negative linear combination  $aH+ bB$. Hence, the base locus of $D$ is contained in $B$. On the other hand, $C(n) \cdot D = -2b <0$, provided that $b>0$. Since curves in the class $C(n)$ cover the divisor $B$,  $B$ has to be in the base locus of $D$ for every $D \in (H,B]$. We conclude that $B$ is the stable base locus of every divisor in the chamber $(H,B]$.
\end{proof}

\begin{corollary}
The birational model of $\PP^{2[n]}$ corresponding to the chamber $[H,B)$ is the symmetric product $\PP^{2(n)}$.
\end{corollary}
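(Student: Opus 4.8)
The plan is to compute $\cM(D)$ directly for every $D$ in the chamber $[H,B)$, that is, for every $D = aH + bB$ with $a > 0$ and $b \geq 0$; since $\cM(D)$ depends only on the ray through $D$, it is enough to treat these representatives. The strategy is to identify the section ring of such a $D$ with that of $aH$, and then to observe that the latter produces $\PP^{2(n)}$. For the second point: since $H = h^*\OO_{\PP^{2(n)}}(1)$ and $h\colon \PP^{2[n]}\to\PP^{2(n)}$ is a birational morphism onto the normal variety $\PP^{2(n)}$ (Theorem \ref{smooth}), the projection formula gives $h_*\OO_{\PP^{2[n]}}(maH) = \OO_{\PP^{2(n)}}(ma)$, hence $H^0(\PP^{2[n]}, maH) = H^0(\PP^{2(n)}, \OO_{\PP^{2(n)}}(ma))$ compatibly with products; as $\OO_{\PP^{2(n)}}(1)$ is ample, $\cM(aH) = \operatorname{Proj}\bigoplus_{m \geq 0} H^0(\PP^{2(n)}, \OO_{\PP^{2(n)}}(ma)) \cong \PP^{2(n)}$. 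This already settles the case $b = 0$.

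For $b > 0$ the heart of the matter is to show that $B$ is a fixed component of $|m(aH+bB)|$ of the \emph{full} multiplicity $mb$. Let $s_B \in H^0(\PP^{2[n]}, \OO(B))$ be the section cutting out the reduced irreducible divisor $B$. A general fiber $\gamma\cong\PP^1$ of the Hilbert--Chow morphism over the diagonal lies in the class $C(n)$, so any section of $\OO(maH + jB)$ restricts to $\gamma$ as a section of a line bundle of degree $ma\,(H\cdot C(n)) + j\,(B\cdot C(n)) = -2j$, which is negative once $j \geq 1$; hence such a section vanishes on $\gamma$. Because these curves $\gamma$ sweep out a dense subset of $B$ (exactly as in the proof of Proposition \ref{BH}), the section vanishes along $B$ and is therefore divisible by $s_B$. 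Iterating from $j = mb$ down to $j = 0$ shows $H^0(\PP^{2[n]}, m(aH+bB)) = s_B^{mb}\cdot H^0(\PP^{2[n]}, maH)$, and since these identifications are multiplicative in $m$ they assemble into a graded-ring isomorphism $\bigoplus_m H^0(maH)\cong\bigoplus_m H^0(m(aH+bB))$. Taking $\operatorname{Proj}$ and invoking the first paragraph gives $\cM(aH+bB)\cong\cM(aH)\cong\PP^{2(n)}$.

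The only genuinely geometric input is the negativity $C(n)\cdot B = -2$ together with the fact that the $C(n)$-curves cover $B$ --- precisely the ingredients already behind Proposition \ref{BH} --- so I do not anticipate a serious obstacle; the remainder is bookkeeping with section rings. The one step that deserves a little care is the passage ``$\sigma$ vanishes on a dense subset of $B$ $\Rightarrow$ $s_B\mid\sigma$'', which uses that $B$ is reduced, irreducible, and of codimension one, so that the zero divisor of a nonzero $\sigma$ containing a dense subset of $B$ must contain all of $B$.
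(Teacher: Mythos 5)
Your proof is correct and follows essentially the same route as the paper: the paper simply cites Proposition \ref{BH} to assert that the moving part of $mH+\alpha B$ is $mH$ and then notes that $H$ induces the Hilbert--Chow morphism. You have merely filled in the details the paper leaves implicit --- the full-multiplicity divisibility by $s_B$ via the same $C(n)\cdot B=-2$ computation, and the identification $\cM(aH)\cong\PP^{2(n)}$ via $h_*\OO_{\PP^{2[n]}}=\OO_{\PP^{2(n)}}$.
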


\begin{proof}
If $D \in [H, B)$, then $D= m H + \alpha B$ with $m>0$ and $\alpha \geq 0$. By proposition \ref{BH}, the  moving part of $D$ is $mH$. Since $H$ induces the Hilbert-Chow morphism, we conclude that the birational model corresponding to $D$ is the symmetric product $\PP^{2(n)}$. 
\end{proof}

Together with our earlier description of the nef cone in Proposition \ref{nef}, this completes the description of the stable base locus decomposition in the cone spanned by $H - \frac{1}{2n-2}B$ and $B$.  To help determine the chambers beyond the nef cone, a couple simple lemmas will be useful.

\begin{lemma}\label{stableContainLemma}
Suppose $0<\alpha < \beta$.  The stable base locus of $H-\alpha B$ is contained in the stable base locus of $H-\beta B$.
\end{lemma}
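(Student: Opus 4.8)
The plan is to show that any divisor of the form $H - \alpha B$ with $0 < \alpha < \beta$ can be written as a positive combination of $H - \beta B$ and a base-point-free divisor, from which the containment of stable base loci follows immediately. The key observation is that $H$ itself is base-point-free, being the pull-back of the ample generator of $\PP^{2(n)}$ under the Hilbert-Chow morphism (see Proposition~\ref{nef} and the discussion preceding Corollary~\ref{nef-cone}).

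\begin{proof}
Since $0 < \alpha < \beta$, we may write
$$H - \alpha B = \frac{\alpha}{\beta}\left(H - \beta B\right) + \left(1 - \frac{\alpha}{\beta}\right) H,$$
and both coefficients $\frac{\alpha}{\beta}$ and $1 - \frac{\alpha}{\beta}$ are positive. The divisor $H$ is base-point-free, as it is the pull-back of the ample generator of $\PP^{2(n)}$ under the Hilbert-Chow morphism $h$. For any effective divisors $D_1, D_2$ on a variety and positive rationals $c_1, c_2$, the stable base locus of $c_1 D_1 + c_2 D_2$ is contained in the intersection of the stable base loci of $D_1$ and $D_2$; in particular $\mathbf{B}(c_1 D_1 + c_2 D_2) \subseteq \mathbf{B}(D_1)$. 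Applying this with $D_1 = H - \beta B$, $D_2 = H$, and noting that $\mathbf{B}(H) = \emptyset$, we conclude that the stable base locus of $H - \alpha B$ is contained in the stable base locus of $H - \beta B$.
\end{proof}

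\noindent The only subtlety to keep in mind is that stable base loci are a priori defined for $\QQ$-divisors by clearing denominators, so the additivity statement used above should be read with the convention that $\mathbf{B}(D)$ denotes the stable base locus of the $\QQ$-divisor class; this poses no real obstacle since all divisor classes in sight are rational multiples of integral classes. There is essentially no hard part here: the statement is a formal consequence of $H$ being base-point-free together with the convexity of the complement of the stable base locus along the segment joining $H - \beta B$ to $H$.
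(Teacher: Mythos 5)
Your proof is correct and is essentially the paper's own argument: both decompose (a positive multiple of) $H-\alpha B$ as $H-\beta B$ plus a positive multiple of the base-point-free divisor $H$, and conclude from base-point-freeness of $H$. One small caveat: the general fact you invoke should read $\mathbf{B}(c_1 D_1+c_2 D_2)\subseteq \mathbf{B}(D_1)\cup\mathbf{B}(D_2)$ (union, not intersection --- consider two divisors with disjoint nonempty base loci); the containment $\mathbf{B}(c_1 D_1+c_2 D_2)\subseteq\mathbf{B}(D_1)$ that you actually use is nevertheless valid here precisely because $\mathbf{B}(H)=\emptyset$.
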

\begin{proof}
The divisor $H$ is base-point free.  There is a positive number $c$ such that $H-\beta B + cH$ lies on the ray spanned by $H-\alpha B$.  Thus any point in the stable base locus of $H-\alpha B$ will also be in the stable base locus of $H-\beta B$.
\end{proof}

\begin{lemma}\label{chamberCurveLemma}
Let $C$ be a curve class in $\PP^{2[n]}$ with $C\cdot H > 0$, and suppose $C\cdot (H-\alpha B)=0$ for some $\alpha>0$.  Then the stable base locus of every divisor $H-\beta B$ with $\beta >\alpha$  contains the locus swept out by irreducible curves of class $C$.
\end{lemma}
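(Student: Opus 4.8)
The plan is to run exactly the numerical argument from the proof of Proposition~\ref{BH}, now with the curve class $C$ in place of $C(n)$. First I would extract the sign of $C\cdot B$ from the hypotheses: from $C\cdot(H-\alpha B)=0$ we get $C\cdot H=\alpha\,(C\cdot B)$, and since $C\cdot H>0$ and $\alpha>0$ this forces $C\cdot B>0$. Then for any $\beta>\alpha$ a one-line computation gives
\[
C\cdot(H-\beta B)=C\cdot H-\beta\,(C\cdot B)=(\alpha-\beta)(C\cdot B)<0 .
\]

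Next I would convert this negativity into a statement about base loci. If no positive multiple of $H-\beta B$ is linearly equivalent to an effective divisor, then its stable base locus is all of $\PP^{2[n]}$ by convention and there is nothing to prove; so assume $m(H-\beta B)\sim E$ for some effective $E$ and some $m>0$. Given any irreducible curve $\Gamma$ with $[\Gamma]=C$, we have $\Gamma\cdot E=m\,(C\cdot(H-\beta B))<0$, and an irreducible curve meeting an effective divisor negatively must be a component of its support, so $\Gamma\subseteq\operatorname{Supp}(E)$. Since this holds for every effective $E\in|m(H-\beta B)|$, we get $\Gamma\subseteq\operatorname{Bs}|m(H-\beta B)|$, and intersecting over all such $m$ shows that $\Gamma$ lies in the stable base locus of $H-\beta B$. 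Because the stable base locus is closed, it then contains the closure of the union of all irreducible curves of class $C$, which is precisely the locus swept out by such curves.

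There is no real obstacle here: the only points needing care are the trivial-case bookkeeping about which multiples of $H-\beta B$ are effective, and the elementary fact that an irreducible curve with negative intersection against an effective divisor is contained in it. I would also note, for use in \S\ref{s-examples}, that together with Lemma~\ref{stableContainLemma} this is exactly the mechanism that propagates base-locus information outward past a wall: once a curve class $C$ with $C\cdot H>0$ is exhibited pinning down the wall $H-\alpha B$, every divisor beyond that wall acquires the sweep of $C$ in its stable base locus.
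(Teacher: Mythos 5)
Your proof is correct and follows exactly the paper's argument: deduce $C\cdot B>0$ from the two hypotheses, observe $C\cdot(H-\beta B)<0$ for $\beta>\alpha$, and conclude that every irreducible curve of class $C$ lies in every effective divisor proportional to $H-\beta B$, hence in the stable base locus. The extra bookkeeping you supply (the case where no multiple is effective, and passing to the closure of the swept locus) is a harmless elaboration of the same two-sentence argument the paper gives.
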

\begin{proof}
Since $C\cdot H>0$ and $C\cdot (H-\alpha B)=0$, we have $C\cdot B>0$, and thus $C\cdot (H-\beta B)<0$ for every $\beta > \alpha$.  Thus any effective divisor on the ray $H-\beta B$ contains every irreducible curve of class $C$.
\end{proof}

To finish the section, we wish to describe several chambers of the stable base locus decomposition arising from divisors of the form $D_k(n)$.  The following lemma will play a key role in identifying the stable base loci.

\begin{lemma}\label{2dplus1lemma}
Let $Z$ be a zero-dimensional scheme of length $n$.  
\begin{enumerate}
\item[(a)] If $n\leq 2d+1$,  then $Z$ fails to impose independent conditions on curves of degree $d$ if and only if it has a collinear subscheme of length at least $d+2$.

\item[(b)] Suppose $n = 2d+2$ and $d\geq 2$.  Then $Z$ fails to impose independent conditions on curves of degree $d$ if and only if it either has a collinear scheme of length at least $d+2$ or it is contained on a (potentially reducible or nonreduced) conic curve.
\end{enumerate}
\end{lemma}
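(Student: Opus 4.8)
The plan is to prove both directions of each biconditional using the classical theory of linear series on plane curves, together with Bezout's theorem and residuation. The "if" directions are easy in both parts: a collinear subscheme of length $\geq d+2$ imposes at most $d+1$ conditions on degree-$d$ curves because every such curve containing $d+2$ collinear points must contain the whole line (Bezout), so the subscheme, hence $Z$, fails to impose independent conditions; similarly in part (b), a scheme contained in a conic $Q$ imposes at most $h^0(\OO_Q(d)) = 2d+1 < 2d+2 = n$ conditions, so again $Z$ fails. The substance is the "only if" direction.

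For part (a), I would argue by induction on $n$ (or on $d$), using the basic exact sequence strategy. Suppose $Z$ of length $n \leq 2d+1$ fails to impose independent conditions on curves of degree $d$, so $h^0(\cI_Z(d)) > \binom{d+2}{2} - n$. The key step is the \emph{Cayley--Bacharach / residuation move}: choose a point $p$ in the support of $Z$, let $Z'$ be a subscheme of colength $1$ in $Z$, and compare conditions imposed by $Z$ and $Z'$. If $Z'$ already fails to impose independent conditions on degree-$d$ curves, one can try to induct; but the cleanest route is the classical lemma that if $Z$ (length $\leq 2d+1$) fails to impose independent conditions on degree-$d$ curves, then some subscheme of $Z$ of length $d+2$ is collinear. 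This is typically proven as follows: take a minimal subscheme $Z_0 \subseteq Z$ that still fails to impose independent conditions on degree-$d$ curves; by minimality $Z_0$ fails by exactly one condition and every proper subscheme imposes independent conditions. Then pick a line $\ell$ meeting $Z_0$ in a subscheme of maximal length $m$; if $m \leq d+1$, one can produce a degree-$d$ curve through any length-$(d+1)$ subscheme avoiding enough of $Z_0$ and derive a contradiction with the failure-by-one-condition property, forcing $m \geq d+2$. The residual scheme $Z_0$ minus its intersection with $\ell$ has length $\leq 2d+1 - (d+2) = d-1$, which imposes independent conditions on degree $d-1$ curves, and this controls everything.

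For part (b), with $n = 2d+2$ and $d \geq 2$: again take a minimal failing subscheme $Z_0$, failing by exactly one condition. Let $m$ be the maximal length of a collinear subscheme of $Z$. If $m \geq d+2$ we are in the first case, so assume $m \leq d+1$; I must then show $Z$ lies on a conic. Pick a line $\ell$ with $\deg(Z_0 \cap \ell) = m$ and let $Z_1$ be the residual scheme, of length $n - m \geq 2d+2 - (d+1) = d+1$. The residual $Z_1$ has length $\leq 2d+2 - ? $ and the residuation exact sequence $0 \to \cI_{Z_1}(d-1) \to \cI_{Z_0}(d) \to \cI_{Z_0\cap \ell, \ell}(d) \to 0$ relates the failure of $Z_0$ in degree $d$ to the behavior of $Z_1$ in degree $d-1$. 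Working this out, either $Z_1$ fails to impose independent conditions on degree $(d-1)$-curves — and since $\ell(Z_1) \le 2(d-1)+1$ one may invoke part (a) at level $d-1$ to get a collinear subscheme of $Z_1$ of length $\geq d+1$, which combined with $\ell$ pushes us back toward the collinear case (needs care) — or $Z_1$ imposes independent conditions, in which case a dimension count forces $h^0(\cI_{Z_0}(2)) > 0$, i.e. $Z_0$, and by genericity arguments all of $Z$, lies on a conic. The bookkeeping for $d = 2$ is a genuine boundary case ($n = 6$ points on a conic) and should be checked by hand.

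The main obstacle is the "only if" direction of part (b): one must carefully rule out intermediate configurations (e.g. $Z$ having a collinear subscheme of length exactly $d+1$ plus the residual points spread out, versus lying on a smooth or reducible conic) and the residuation bookkeeping is delicate near the extremes $m = d+1$ and $d = 2$. I expect the argument to hinge on a clean statement of the minimal-failing-subscheme lemma and on invoking part (a) at the level $d-1$ to handle the residual scheme.
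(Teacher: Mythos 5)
Your part (a) is in substance the paper's argument: residuation with respect to a line meeting $Z$ in a subscheme of maximal length $m$, followed by induction. The paper runs the induction on $d$ in the contrapositive, splitting into the cases $m\le d$ and $m=d+1$ to verify that the residual scheme has length at most $2(d-1)+1$ and no collinear subscheme of length $d+1$, so that the inductive hypothesis applies in degree $d-1$. You instead assert that $m\le d+1$ ``can be made to yield a contradiction''; that unproved assertion is exactly where the work is, but it is the same case split and would go through. Note that the minimal-failing-subscheme reduction buys you nothing here: minimality is with respect to degree-$d$ conditions, while the residual scheme fails in degree $d-1$, so you still need the induction on $d$.

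Part (b) has a genuine gap. Let $m$ be the maximal length of a collinear subscheme of $Z$ and let $Z_1$ be the residual of $Z$ with respect to a line $L$ realizing $m$, so $Z_1$ has length $2d+2-m$. Invoking part (a) at level $d-1$ requires this length to be at most $2(d-1)+1$, i.e.\ $m\ge 3$; when $Z$ contains no collinear triple ($m=2$) the residual has length $2d$ and part (a) does not apply. This is precisely the configuration in which $Z$ can lie on a smooth conic, so it cannot be waved away (your ``boundary case'' is not just $d=2$). Moreover, the branch you expect to produce the conic is vacuous: if $Z_1$ imposes independent conditions on curves of degree $d-1$ and $Z\cap L$ imposes independent conditions on $\OO_L(d)$ (automatic once $m\le d+1$), the residuation sequence forces $Z$ to impose independent conditions on curves of degree $d$, a contradiction; no dimension count in that branch yields $h^0(\cI_Z(2))>0$. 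The missing idea, which is how the paper closes the argument, is to residuate by a conic rather than a line in the no-collinear-triple case: choose an irreducible conic $C$ through a length-$5$ subscheme of $Z$ (irreducible because $Z$ has no collinear triples); since $Z\not\subset C$, the scheme $Z\cap C$ has length at most $2d+1$ and hence imposes independent conditions on $\OO_C(d)\cong\OO_{\PP^1}(2d)$, while the residual scheme has length at most $(2d+2)-5=2(d-2)+1$ and no collinear triples, so part (a) in degree $d-2$ applies. Without this device, or an equivalent one, your argument for (b) does not close.
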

\begin{proof}
(a) It is clear that if $Z$ has a collinear subscheme of length at least $d+2$ then $Z$ fails to impose independent conditions on curves of degree $d$. 
For the more difficult direction, we proceed by induction on $d$.  Suppose that $Z$ has no collinear subscheme of length $d+2$.  Given a line $L$ in $\mathbb{P}^2$ we can consider the \emph{residuation sequence} \begin{equation}\label{residuation} 0\to \mathcal I_{Z'}(d-1)\stackrel{L}{\to} \mathcal I_{Z}(d) \to \mathcal I_{Z\cap L\subset L}(d)\to 0\end{equation} where $Z'$ is the subscheme of $Z$ defined by the ideal quotient $(\mathcal I_Z:\mathcal I_L)$.  Clearly the scheme $Z\cap L$ imposes independent conditions on curves of degree $d$ since $Z$ contains no collinear subscheme of length $d+2$.  Thus $H^1(\mathcal I_{Z\cap L\subset L}(d)) = 0$. If we show that $Z'$ imposes independent conditions on curves of degree $d-1$, then it will follow that $H^1(\mathcal I_{Z'}(d-1))=0$ and hence $Z$ imposes independent conditions on curves of degree $d$.

To apply our induction hypothesis, choose the line $L$ such that the intersection $Z\cap L$ has as large a length $\ell$ as possible.  Clearly $\ell \geq 2$ unless $Z$ is just a point, so $Z'$ has length at most $2(d-1)+1$.  If  $\ell\leq d$, then since $Z$ contains no collinear subscheme of length at least $d+1$ we find $Z'$ also contains no collinear subscheme of length at least $d+1$, so by the induction hypothesis $Z'$ imposes independent conditions on curves of degree $d-1$.  On the other hand, if $\ell =d+1$, then $Z'$ has degree $d$, so does not contain a collinear subscheme of degree $d+1$.

(b) Again it is clear that if $Z$ is contained in a conic or has a subscheme of length $d+2$ supported on a line that $Z$ fails to impose independent conditions on curves of degree $d$.

Suppose $Z$ does not lie on a conic and that it does not contain a collinear subscheme of length $d+2$.  We will reduce to part (a) by choosing an appropriate residuation depending on the structure of $Z$.  

If $Z$ meets a line in a scheme of length $d+1$, then the subscheme $Z'$ of length $d+1$ residual to this line cannot also lie on a line or $Z$ would lie on a conic.  Then $Z'$ imposes independent conditions on curves of degree $d+1$ by part (a), so by the residuation sequence $Z$ imposes independent conditions on curves of degree $d+2$.  

Next, suppose $Z$ does not meet a line in a scheme of length $d+1$ but $Z$ has a collinear subscheme of length at least $3$.  Looking at the scheme $Z'$ residual to this line, we may again apply part (a) to conclude $Z'$ imposes independent conditions on curves of degree $d+1$, and thus that $Z$ imposes independent conditions on curves of degree $d+2$.

Finally assume no line meets $Z$ in a scheme of length greater than $2$. Choose any length $5$ subscheme $Z''$ of $Z$, and let $C$ be a conic curve containing $Z''$.  The curve $C$ is in fact reduced and irreducible since $Z$ contains no collinear triples.  Since $Z$ does not lie on a conic, $Z\cap C$ is a subscheme of $C$ of length at most $2d+1$.  But any subscheme of $C$ of length at most $2d+1$ imposes independent conditions on sections of $\OO_{C}(d) = \OO_{\PP^1}(2d)$.  Thus if $Z'$ is residual to $Z\cap C$ in $Z$, we see by the residuation sequence corresponding to $C$ that $Z$ imposes independent conditions on curves of degree $d$ if $Z'$ imposes independent conditions on curves of degree $d-2$.  Since $Z'$ has degree at most $(2d+2)-5 = 2(d-2)+1$ and contains no collinear triples, we conclude that it imposes independent conditions on curves of degree $d-2$ by part (a).
\end{proof}

We now identify many of the chambers in the stable base locus decomposition.

\begin{proposition}\label{DkBase}
Let $n\leq k(k+3)/2$, so that $D_k(n)$ is an effective divisor on $\PP^{2[n]}$.  Its class lies on the ray $H-\frac{1}{2k}B$.
\begin{enumerate}
\item[(a)] If $n\leq 2k+1$, the stable base locus of divisors in the chamber $[H-\frac{1}{2k} B,H-\frac{1}{2k+2}B)$ consists of schemes of length $n$ with a linear subscheme of length $k+2$.

\item[(b)] If $n = 2k+2$, the stable base locus of divisors in the chamber $[H-\frac{1}{2k}B,H-\frac{1}{2k+2}B)$ contains the locus of schemes of length $n$ with a linear subscheme of length $k+2$ and is contained in the locus of schemes of length $n$ which either have a linear subscheme of length $k+2$ or which lie on a conic.

\item[(c)] In any case, the ray $H-\frac{1}{2k} B$ spans a wall in the stable base locus decomposition of the effective cone of $\PP^{2[n]}$.
\end{enumerate}
\end{proposition}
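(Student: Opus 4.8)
The plan is to sandwich the stable base locus of every divisor in the chamber between an upper bound coming from the interpolation--failure locus of $\OO_{\PP^2}(k)$ and a lower bound coming from a moving family of test curves, and then to deduce (c) by exhibiting a test curve that forces new schemes into the stable base locus exactly as one leaves the ray $H-\frac{1}{2k}B$. Write $\Sigma_k\subset\PP^{2[n]}$ for the locus of length-$n$ schemes failing to impose independent conditions on curves of degree $k$, and $\Lambda\subset\PP^{2[n]}$ for the locus of schemes having a subscheme of length $k+2$ supported on a line. We may assume $k\le n-1$, since for $k\ge n$ the divisor $D_k(n)$ is ample and (a), (b) hold trivially with empty stable base locus.

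For the upper bounds in (a) and (b), note that $n\le k(k+3)/2<\binom{k+2}{2}$, so the general $Z\in\PP^{2[n]}$ imposes independent conditions on $\OO_{\PP^2}(k)$ and this line bundle satisfies interpolation for $n$ points. Applying Proposition \ref{interpClass} with $E=\OO_{\PP^2}(k)$, for which $D_E(n)=D_k(n)$ lies on the ray $H-\frac{1}{2k}B$, the stable base locus of $D_k(n)$ is contained in $\Sigma_k$. By Lemma \ref{stableContainLemma} the stable base locus of $H-\alpha B$ is contained in $\Sigma_k$ for every $0<\alpha\le\frac{1}{2k}$, hence for every divisor in the chamber $[H-\frac{1}{2k}B,H-\frac{1}{2k+2}B)$. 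Now Lemma \ref{2dplus1lemma}(a) identifies $\Sigma_k$ with $\Lambda$ when $n\le 2k+1$, while Lemma \ref{2dplus1lemma}(b) identifies $\Sigma_k$ with $\Lambda\cup\{Z:Z\text{ lies on a conic}\}$ when $n=2k+2$; here $n=2k+2\le k(k+3)/2$ forces $k\ge 3$, so the hypothesis $d\ge 2$ of that lemma is met. This gives the upper bounds claimed in (a) and (b).

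For the lower bound, assume $n\ge k+2$ (if $n\le k+1$ then $\Sigma_k=\Lambda=\varnothing$, $D_k(n)$ is base-point free, and both statements hold trivially). Consider the test curve $C_{k+2}(n)$ of \S \ref{s-ample}: fix $k+1$ general points on a line $l$ and $n-k-2$ general points off $l$, and let one more point move along $l$. Every scheme on such a curve lies in $\Lambda$, and as the line and the fixed points vary these curves sweep out a dense subset of $\Lambda$; indeed $\Lambda$ is irreducible with general member a reduced length-$(k+2)$ subset of a line together with $n-k-2$ general reduced points, matching the dimension of the constructed family of curves. Since $C_{k+2}(n)\cdot H=1>0$ and $C_{k+2}(n)\cdot\bigl(H-\tfrac{1}{2k+2}B\bigr)=1-2(k+1)\cdot\tfrac{1}{2k+2}=0$, Lemma \ref{chamberCurveLemma} shows every divisor $H-\beta B$ with $\beta>\tfrac{1}{2k+2}$ has stable base locus containing this swept-out locus, hence containing $\Lambda$ (stable base loci are closed). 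Combined with the upper bounds, the stable base locus in the chamber $[H-\tfrac{1}{2k}B,H-\tfrac{1}{2k+2}B)$ is exactly $\Lambda$ in case (a), and lies between $\Lambda$ and $\Lambda\cup\{Z:Z\text{ lies on a conic}\}$ in case (b).

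For (c), use instead the test curve $C_{k+1}(n)$ of \S \ref{s-ample}, with $C_{k+1}(n)\cdot H=1$ and $C_{k+1}(n)\cdot B=2k$, so $C_{k+1}(n)\cdot\bigl(H-\tfrac{1}{2k}B\bigr)=0$. By Lemma \ref{chamberCurveLemma} the stable base locus of every $H-\beta B$ with $\beta>\tfrac{1}{2k}$ contains the locus swept out by these curves, whose general member is a scheme $Z$ consisting of $k+1$ general collinear points and $n-k-1$ general points. A residuation along $l$ shows $Z\notin\Sigma_k$: the scheme residual to $l$ is $n-k-1$ general points, and $n-k-1\le (k^2+k-2)/2<\binom{k+1}{2}$, so it imposes independent conditions on curves of degree $k-1$, while $Z\cap l$ is $k+1$ general points of $l\cong\PP^1$, whence $H^1(\cI_{Z\cap l\subset l}(k))=0$; the residuation sequence then gives $H^1(\cI_Z(k))=0$. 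Thus the swept-out locus is not contained in $\Sigma_k$, whereas the first two paragraphs show the stable base locus of every $H-\alpha B$ with $\alpha\le\tfrac{1}{2k}$ is contained in $\Sigma_k$. Hence the stable base locus strictly increases across the ray $H-\tfrac{1}{2k}B$, so this ray spans a wall of the stable base locus decomposition. The step needing the most care is confirming that the explicit test curves sweep out dense subsets of the advertised loci (an irreducibility and dimension count) together with the residuation estimate in (c), since it is precisely the escape of that swept locus from $\Sigma_k$ that makes $H-\tfrac{1}{2k}B$ a genuine wall rather than an interior ray.
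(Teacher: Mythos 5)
Your proposal is correct and follows essentially the same route as the paper: the upper bound comes from the base locus of $D_k(n)$ lying in the locus of schemes failing to impose independent conditions on degree-$k$ curves (Proposition \ref{interpClass}, equivalently Proposition \ref{nef}(4)) combined with Lemma \ref{2dplus1lemma} and Lemma \ref{stableContainLemma}, the lower bound from the dual curve $C_{k+2}(n)$ via Lemma \ref{chamberCurveLemma}, and part (c) from the dual curve $C_{k+1}(n)$ together with a residuation argument showing a general scheme with a collinear subscheme of length $k+1$ still imposes independent conditions on degree-$k$ curves. The extra care you take with density of the swept loci and with the degenerate ranges of $k$ is welcome but does not change the argument.
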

\begin{proof}
Recall that $C_{k}(n)$ is the curve class in $\PP^{2[n]}$ given by fixing $k-1$ points on a line, $n-k$ points off the line, and letting a final point move along the line.  We have $C_k(n)\cdot H = 1$ and $C_k(n) \cdot B = 2(k-1)$.  It follows that $D_{k+1}(n)\cdot C_{k+2}(n) = 0$ for all $k$.  By Lemma \ref{chamberCurveLemma}, if $\alpha>\frac{1}{2k+2}$ then the locus swept out by irreducible curves of class $C_{k+2}(n)$ is contained in the stable base locus of $H-\alpha B$.  This locus certainly contains the locus of schemes with a linear subscheme of length $k+2$.  On the other hand, by Proposition \ref{nef} the divisor $D_k(n)$ has stable base locus contained in the locus of schemes of length $n$ which fail to impose independent conditions on curves of degree $k$. 

By Lemma \ref{2dplus1lemma} (a), we see that if $n\leq 2k+1$ then the stable base locus of $D_k(n)$ is contained in the locus of schemes of length $n$ with a linear subscheme of length $k+2$, and therefore that the stable base locus of divisors in the chamber $[H-\frac{1}{2k}B,H-\frac{1}{2k+2}B)$ is precisely this locus.

The conclusion for (b) follows similarly by using Lemma \ref{2dplus1lemma} (b) along with Lemma \ref{stableContainLemma}.

For (c), notice that every divisor $H-\alpha B$ with $\alpha > \frac{1}{2k}$ has the locus of schemes of length $n$ with a linear subscheme of length $k+1$ in its stable base locus.  Let $Z$ be a general scheme of length $n$ with a linear subscheme of length $k+1$.  Using the residuation sequence (\ref{residuation}),  we see that $Z$ imposes independent conditions on curves of degree $k$.  By Proposition \ref{nef}, $Z$ does not lie in the base locus of $D_k(n)$, so the ray spanned by $D_k(n)$ forms a wall in the stable base locus decomposition.
\end{proof}

\section{Preliminaries on Bridgeland stability conditions}\label{Bridgeland-prelim}

In this section, we review the basic facts concerning Bridgeland stability conditions introduced in \cite{bridgeland:stable} and recall several relevant constructions from \cite{bridgeland:K3} and \cite{aaron:k3}.
\smallskip

Let $D^b(\coh(\bP^2))$ denote the bounded derived category of coherent sheaves on $\PP^2$. 

\begin{definition}
A {\em pre-stability condition} on $\bP^2$ consists of a triple $(\cA; d,r)$ such that:
\begin{itemize}
\item $\cA$ is the heart of a $t$-structure on $D^b(\coh(\bP^2))$. 
\item $r$ and $d$ are linear maps: $$r,d: K(\cD(\coh(\bP^2))) \rightarrow \bR$$
from the $K$-group of the derived category to $\bR$ satisfying: 
\begin{itemize}
\item[(*)] $r(E) \ge 0$ for all $E\in \cA$, and 
\item[(**)] if $r(E) = 0$ and $E \in \cA$ is nonzero, then $d(E) > 0$.
\end{itemize}
\end{itemize}
A pre-stability condition is a {\it stability condition} if objects of $\cA$ all have the {\em Harder-Narasimhan property}, which we now define.
\end{definition}

\begin{remark}
The function $Z = -d + ir$ is called the {\em central charge} and maps non-zero objects of $\cA$ to the upper-half plane $\{ \rho e^{i\pi \theta} \ | \ \rho >0, 0 < \theta \leq 1\}$.
\end{remark}

\begin{definition}
 The {\it slope} of a nonzero object $E \in \cA$ (w.r.t. $(r,d)$) is:

$$\mu(E) := \left\{ \begin{array}{l} {d(E)}/{r(E)} \ \mbox{if $r(E) \ne 0$} \\ \\ +\infty \ \mbox{if $r(E) = 0$}
\end{array}\right.$$
\end{definition}

\begin{definition}
 $E \in \cA$ is {\it stable} (resp. {\it semi-stable}) if
$$F \subset E  \Rightarrow \mu(F) < \mu(E) \ \mbox{(resp. $\mu(F) \le \mu(E)$)}$$
for all nonzero proper subobjects $F \subset E$ in the category $\cA$. 
\end{definition}

\begin{definition}
A pre-stability condition $(\cA;d,r)$ has the Harder-Narasimhan property if
every nonzero object $E \in \cA$ admits
a finite filtration:
$$0 \subset E_0 \subset E_1 \subset \dots \subset E_n = E$$
uniquely determined by the property that each $F_i := E_{i}/E_{i-1}$ is semi-stable and 
$\mu(F_1) > \mu(F_2) > \dots > \mu(F_{n})$. This filtration is called the {\em Harder-Narasimhan filtration} of $E$.
\end{definition} 

\begin{remark}
 Let $L$ denote the hyperplane class on $\PP^2$. The standard $t$-structure with ``ordinary'' degree and rank
$$d(E) := \ch_1(E)\cdot L, \ \ r(E) := \ch_0(E)\cdot L^2$$
on coherent sheaves on $\bP^2$ are {\bf not} a pre-stability condition because:
$$r(\bC_p) =  0 = d(\bC_p)$$
for skyscraper sheaves $\bC_p$.

However, the resulting {\it Mumford slope}:
$$\mu(E) := \frac {d(E)}{r(E)},$$
well-defined away from coherent sheaves on $\bP^2$ of finite length,  has a (weak) Harder-Narasimhan property for coherent sheaves on $\bP^2$:
$$E_0 \subset E_1 \subset \dots \subset E_n = E$$
where $E_0 := \tors(E)$ is the torsion subsheaf of $E$, and for $i > 0$, the subquotients 
$F_i := E_i/E_{i-1}$ are 
Mumford semi-stable torsion-free sheaves of strictly decreasing slopes $\mu_i := \mu(F_i)$.
\end{remark}

The following formal definition is useful:

\begin{definition}
For $s\in \bR$ and $E \in K(\cD^b(\operatorname{coh}(\bP^2)))$ define:
$$\ch(E(-s)) := \ch(E)\cdot e^{-sL},$$
where $\ch(E(-s))$ is the Chern character of $E(-sL)$ when $s\in \bZ$. 
\end{definition}

We then have the following:

\medskip

\nt {\bf Bogomolov Inequality} (see \cite[Chapter 9]{friedman}). If $E$ is a Mumford semi-stable torsion-free sheaf on $\bP^2$, then:
$$\ch_1(E(-s))\cdot L = 0 \ \Rightarrow \ \ch_2(E(-s)) \le 0$$ 

\begin{remark}
 There is an even stronger inequality obtained from:
$$\chi(\bP^2, E\otimes E^*) \le 1$$
for all stable vector bundles $E$ on $\bP^2$, but we will not need this.
\end{remark}

\begin{definition}\label{torsion-pair}
 Given $s\in \bR$, define full subcategories $\cQ_s$ and $\cF_s$ 
of $\coh(\bP^2)$ by the following conditions on their objects:

\begin{itemize}
\item $Q \in \cQ_s$ if $Q$ is torsion or if each $\mu_i > s$ in the Harder-Narasimhan filtration of $Q$.
\item $F \in \cF_s$ if $F$ is torsion-free, and each $\mu_i \le s$ in the Harder-Narasimhan filtration of $F$. 
\end{itemize}
\end{definition}

Each pair $(\cF_s,\cQ_s)$ of full subcategories therefore satisfies \cite[Lemma 6.1]{bridgeland:K3}:

\medskip

(a) For all $F\in \cF_s$ and $Q \in \cQ_s$, 
$$\Hom(Q,F) = 0$$

(b) Every coherent sheaf $E$ fits in a short exact sequence:
$$0 \rightarrow Q \rightarrow E \rightarrow F \rightarrow 0,$$
where $Q \in \cQ_s$, $F\in \cF_s$ and the extension class are uniquely determined
up to isomorphism.

\medskip

A pair of full subcategories $(\cF,\cQ)$ of an abelian category $\cA$ satisfying conditions (a) and (b) is called a {\em torsion pair}.  A torsion pair $(\cF, \cQ)$  
defines a $t$-structure on $\cD^b(\cA)$ \cite{tilting} with:
$$\cD^{\ge 0} = \{\mbox{complexes}\ E  \ | \ 
\ \rH^{-1}(E) \in \cF \ \mbox{and} \ \rH^i(E) = 0 \ \mbox{for} \ i < -1\}$$
$$\cD^{\le 0} = \{\mbox{complexes}\ E  \ | \ 
\ \rH^{0}(E) \in \cQ \ \mbox{and} \ \rH^i(E) = 0 \ \mbox{for} \ i > 0\}\ \ \ \ \ $$
The heart of the $t$-structure defined by a torsion pair consists of:
$$\{E \ | \ \rH^{-1}(E) \in \cF, \rH^0(E) \in \cQ, \ \mbox{and}\ \rH^i(E) = 0 \ \mbox{otherwise}\}.$$
The natural exact sequence:
$$0 \rightarrow \rH^{-1}(E)[1] \rightarrow E \rightarrow \rH^0(E) \rightarrow 0$$
for such an object of $\cD^b(\cA)$ implies that the objects of the heart are all given by pairs of objects 
$F \in \cF$ and $Q \in \cQ$ together with an extension class in $\Ext_{\cA}^2(Q,F)$ \cite{tilting}.

\begin{definition}
Let $\cA_s$ be the heart of the $t$-structure on $\cD^b(\coh(\bP^2))$ obtained from 
the torsion-pair $(\cF_s, \cQ_s)$ defined in Definition \ref{torsion-pair}. 
\end{definition}

 Let $(d,r)$ be degree and rank functions defined by:
$$r(E) := \ch_1(E(-s))\cdot L\ \ \ $$
$$d(E) := - \ch_0(E(-s))\cdot L^2$$
(the corresponding slope is the negative reciprocal of the Mumford slope of $E(-s)$.)
Then it is easy to see that $r(E) \ge 0$ for all objects of $\cA_s$. Furthermore, $d(E) \ge 0$, where the inequality is strict {\bf unless} 
$E$ is an object of the following form:
$$0 \rightarrow F[1] \rightarrow E \rightarrow T \rightarrow 0,$$
where $F$ is a semi-stable torsion-free sheaf satisfying $c_1(F(-s))\cdot L = 0$ and $T$ is a 
sheaf of finite length. By modifying the degree, one obtains a Bridgeland stability condition as described in the next theorem.

\begin{theorem}[Bridgeland \cite{bridgeland:K3}, Arcara-Bertram \cite{aaron:k3}, Bayer-Macr\`i \cite{bayer}]
 For each $s\in \bR$ and $t > 0$, the rank and 
degree functions on $\cA_s$ defined by:
\begin{itemize}
\item $r_t(E) := t\cdot \ch_1(E(-s))\cdot L$
\item  $d_t(E) := -({t^2}/2) \ch_0(E(-s))\cdot L^2 + \ch_2(E(-s))$
\end{itemize}
define stability conditions on $\cD^b(\operatorname{coh}(\bP^2))$ with slope function $\mu_{s,t} = d_t/r_t$. 
\end{theorem}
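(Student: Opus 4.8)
The plan is to verify that the pair $(\cA_s; d_t, r_t)$ satisfies the axioms of a pre-stability condition and then upgrade it to a genuine stability condition by establishing the Harder--Narasimhan property. The first, mostly formal, step is to check positivity of $r_t$ and the boundary condition on $d_t$. Positivity of $r_t$ is immediate from the excerpt: we already observed that $r(E) = \ch_1(E(-s))\cdot L \ge 0$ for every $E \in \cA_s$, and $r_t = t\cdot r$ with $t>0$. For the boundary axiom (**), I would invoke the description given just before the theorem statement: if $r_t(E) = 0$ for a nonzero $E \in \cA_s$, then $E$ sits in an exact sequence $0 \to F[1] \to E \to T \to 0$ with $F$ a Mumford semi-stable torsion-free sheaf satisfying $c_1(F(-s))\cdot L = 0$ and $T$ of finite length. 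I then compute $d_t(E)$ on each piece. On $T$ one has $\ch_0(T) = 0$ and $\ch_2(T) = \mathrm{length}(T) \ge 0$, with strict inequality unless $T = 0$. On $F[1]$, the shift flips the sign of the Chern character, so $d_t(F[1]) = (t^2/2)\ch_0(F(-s))\cdot L^2 - \ch_2(F(-s))$; since $c_1(F(-s))\cdot L = 0$, the Bogomolov inequality quoted in the excerpt gives $\ch_2(F(-s)) \le 0$, hence this term is $\ge (t^2/2)\ch_0(F(-s))\cdot L^2 \ge 0$, and it is strictly positive as soon as $F \ne 0$ because $t>0$ and $\ch_0(F(-s))\cdot L^2 = \ch_0(F)\cdot L^2 = \rk(F) > 0$ for a nonzero torsion-free sheaf. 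Since $E \ne 0$ forces at least one of $F$, $T$ nonzero, additivity of $d_t$ yields $d_t(E) > 0$, which is exactly (**). This simultaneously shows that the central charge $Z_{s,t} = -d_t + i r_t$ maps nonzero objects of $\cA_s$ into the (closed) upper half plane with the stated positivity, so $\mu_{s,t}$ is a well-defined slope function.

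The second, and genuinely substantive, step is the Harder--Narasimhan property for $\cA_s$. Here I would not reprove it from scratch but rather cite the three references attached to the theorem (Bridgeland, Arcara--Bertram, Bayer--Macr\`i), whose arguments apply verbatim: the key inputs are that the image of the central charge is discrete enough (the Chern character lattice is discrete, and the values $r_t$, $d_t$ are locally finite in the relevant sense) together with Noetherianity of $\cA_s$, which follows because $\cA_s$ is obtained by tilting $\coh(\PP^2)$ at a torsion pair $(\cF_s, \cQ_s)$ and the classical argument of Bridgeland produces HN filtrations whenever there are no infinite ascending or descending chains of subobjects with increasing (resp. decreasing) slope. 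If a self-contained argument is wanted, I would follow the standard template: first show there is no infinite sequence of subobjects $E = A_0 \supset A_1 \supset \cdots$ in $\cA_s$ with $\mu_{s,t}(A_{i+1}) > \mu_{s,t}(A_i)$, using that $r_t$ takes values in a discrete subset of $\bR_{\ge 0}$ and that once $r_t$ stabilizes the quantity $d_t$ must strictly decrease along a destabilizing chain but is bounded below; symmetrically rule out infinite destabilizing quotient chains; then the formal lemma of Bridgeland produces the filtration.

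The main obstacle is precisely this HN step: axioms (*) and (**) are a short Chern-character computation leaning on Bogomolov, but existence of HN filtrations requires the finiteness/Noetherianity properties of the tilted heart $\cA_s$, and for non-integer $s$ the function $r_t = t\,\ch_1(E(-s))\cdot L$ is not obviously valued in a discrete set, so some care is needed to see that a destabilizing chain cannot have infinitely many distinct slopes. The cleanest route is to note that, after clearing the common factor $t$, the pair $(\ch_1(E(-s))\cdot L, \text{some integral combination})$ still lands in a rank-two lattice depending only on $\ch(E) \in H^*(\PP^2, \bZ)$ — the $s$-dependence is a fixed real linear shift of a lattice — so discreteness of the relevant ``first nonzero coordinate'' is preserved, exactly as in \cite{bridgeland:K3}. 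With that observation in place the HN argument is the standard one, and I would present it by reference rather than in full. This completes the verification that $(\cA_s; d_t, r_t)$ is a stability condition on $\cD^b(\coh(\PP^2))$ with slope function $\mu_{s,t} = d_t/r_t$.
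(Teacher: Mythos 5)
Your proposal is correct and follows essentially the same route as the paper, which likewise only verifies the pre-stability axioms --- via the characterization of nonzero $E\in\cA_s$ with $r(E)=0$ as extensions $0\to F[1]\to E\to T\to 0$ with $T$ of finite length and $c_1(F(-s))\cdot L=0$, combined with the Bogomolov inequality to get $d_t>0$ --- and then simply cites Bayer--Macr\`i for the finiteness of Harder--Narasimhan filtrations. Your explicit computation of $d_t$ on the two pieces is exactly the argument the paper's accompanying remark is gesturing at.
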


\begin{remark}
 By the characterization of the objects of $\cA_s$ that satisfy $r(E) = d(E) = 0$ and the Bogomolov inequality, it is immediate that the triples $(\cA_s; r_t,d_t)$ are  pre-stability conditions. The finiteness of Harder-Narasimhan filtrations is proved by Bayer-Macr\`i in \cite{bayer}.
\end{remark}

Fix a triple of numbers defining a Chern character
$$(r,c,d) \leadsto (r,cL,dL^2) \ \ \mbox{on $\bP^2$}$$
($r$ and $c$ are integers, and $d - c^2/2$ is an integer).

\medskip

\begin{corollary}[Abramovich-Polishchuk, \cite{abramovich}]
 For each point $(s,t)$ in the upper-half plane, the moduli stack $\cM_{\bP^2}(r,c,d)$ of semi-stable objects of $\cA_s$ with the given Chern character with respect to the slope function $\mu_{s,t}$ is of finite type and satisfies the semi-stable replacement property. In particular, the moduli stack of stable objects is separated, and if all semi-stable objects are stable, then the moduli stack is proper. 
\end{corollary}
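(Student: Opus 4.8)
The plan is to deduce the statement from the general machinery of Abramovich and Polishchuk \cite{abramovich}, which to a Noetherian heart of a bounded $t$-structure equipped with a stability function associates a moduli stack of semi-stable objects and proves boundedness and semi-stable replacement, provided certain finiteness hypotheses hold. Thus the proof reduces to verifying that the pre-stability condition $(\cA_s; r_t, d_t)$ constructed above meets these hypotheses, and then translating the output of \cite{abramovich} into the three assertions of the corollary.

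First I would check that $\cA_s$ is a Noetherian abelian category. Since $\cA_s$ is obtained from $\coh(\bP^2)$ by tilting at the torsion pair $(\cF_s,\cQ_s)$, and $\coh(\bP^2)$ is Noetherian while $\cF_s$ consists of torsion-free sheaves whose Harder--Narasimhan slopes are bounded above by $s$, this follows from the standard criterion for Noetherianity of a tilted heart; this verification is already implicit in the work of Bayer--Macr\`i \cite{bayer} establishing the Harder--Narasimhan property for these stability conditions (see also \cite{bridgeland:K3}, \cite{aaron:k3}). Next I would establish boundedness: the set of $\mu_{s,t}$-semi-stable objects $E\in\cA_s$ with fixed Chern character $(r,c,d)$ is bounded. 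For this, write the cohomology sheaves of such an $E$ as $\rH^{-1}(E)=F\in\cF_s$ and $\rH^0(E)=Q\in\cQ_s$; semi-stability of $E$ bounds the Mumford slopes appearing in the Harder--Narasimhan filtrations of $F$ and $Q$ from both sides, and together with the Bogomolov inequality this confines $\ch(F)$ and $\ch(Q)$ to a finite set. Boundedness of Mumford semi-stable sheaves with fixed numerical invariants then yields boundedness of the possible $E$, so the moduli stack is of finite type.

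With Noetherianity and boundedness in hand, the results of \cite{abramovich} apply directly: the stack $\cM_{\bP^2}(r,c,d)$ is an Artin stack of finite type, and it satisfies the semi-stable replacement property, namely that any family of semi-stable objects over the punctured spectrum of a discrete valuation ring extends, after a finite base change, to a family of semi-stable objects of the same Chern character over the whole spectrum. The remaining assertions are formal consequences. Stable objects are simple and admit no nonzero homomorphisms to non-isomorphic stable objects of the same slope, so limits of one-parameter families of stable objects, when they exist, are unique; this is exactly the uniqueness required for the valuative criterion of separatedness of the stable locus. Finally, when every semi-stable object of Chern character $(r,c,d)$ is stable, semi-stable replacement supplies existence of limits while the previous observation supplies uniqueness, so the valuative criterion of properness holds, and together with finite type this shows the moduli stack is proper.

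The main obstacle is that essentially all the substantive work lies in the two hypothesis checks — that $\cA_s$ is Noetherian and that fixed-invariant semi-stable objects of $\cA_s$ form a bounded family — since once these hold the three conclusions follow from \cite{abramovich} and formal manipulations with the valuative criteria. Among these, the boundedness estimate, which requires controlling the Harder--Narasimhan factors of the two cohomology sheaves of a Bridgeland-semi-stable complex via the Bogomolov inequality, is the least formal step.
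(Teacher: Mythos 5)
The paper gives no proof of this corollary at all: it is stated as a direct citation of Abramovich--Polishchuk \cite{abramovich}, with the projectivity of the coarse moduli spaces deferred to the later quiver-GIT construction. Your proposal correctly identifies and fills in exactly the hypotheses one must verify (Noetherianity of the tilted heart $\cA_s$ and boundedness of semi-stable objects with fixed Chern character) before invoking \cite{abramovich}, and the deduction of separatedness and properness from semi-stable replacement via the valuative criteria is the intended formal argument, so this is consistent with the paper's (implicit) approach.
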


\begin{remark}
 We will show that the coarse moduli spaces are projective.
\end{remark}

\section{Potential walls}\label{PotentialWalls} The 
{\it potential wall} associated to a pair of Chern characters:
$$(r,c,d) \ \mbox{and}\ (r',c',d') \ \mbox{on}\ \bP^2$$
is the following subset of the upper-half plane:
$$W_{(r,c,d),(r',c',d')} := \{(s,t) \ | \mu_{s,t}(r,c,d) = \mu_{s,t}(r',c',d')\}$$
where $\mu_{s,t}$ are the {\it slope functions} $\mu_{s,t} = d_t/r_t$ defined in \S \ref{Bridgeland-prelim}. Specifically:
$$\mu_{s,t}(r,c,d) =  \frac {-\frac {t^2}2 r + (d - sc + \frac{s^2}2 r)}{t(c - sr)}$$
and so the wall is given by:
$$W_{(r,c,d),(r',c',d')} = \{(s,t) \ | (s^2 + t^2)(rc' - r'c) - 2s(rd' - r'd) + 2(cd' - c'd) = 0\}$$
i.e. it is either a semicircle centered on the real axis or a vertical line 
(provided that one triple is not a scalar multiple of the other).

\medskip

Walls are significant because if $F, E$ are objects of $\cD^b(\coh(\bP^2))$ with
$$\ch(F) = (r,cL,dL^2) \ \mbox{and} \ \ch(E) = (r',c'L,d'L^2)$$
and if  $F \subset E$ in $\cA_s$ with $(s,t) \in W_{(r,c,d),(r',c',d')}$, 
then $E$ is not $(s,t)$-stable (by definition), but it {\bf may} be $(s,t)$-semistable and stable for 
nearby points on one side of the wall, but not the other. Crossing the wall would therefore 
change the set of $(s,t)$-stable objects. 

\medskip

Notice that by the explicit equation for the potential wall $W_{(r,c,d),(r',c',d')}$:

\medskip

(i) if $rc' = r'c$ (i.e. the Mumford slopes of the triples are the same), then the wall is the vertical line:
$$s =  \frac{cd' - c'd}{rd' - r'd}$$

(ii) otherwise the wall is the semicircle with center
$$\left(\frac{rd' - r'd}{rc' - r'c},0\right)$$
and radius
$$\sqrt{\left(\frac{rd' - r'd}{rc' - r'c}\right)^2 - 2\left(\frac{cd' - c'd}{rc' - r'c}\right)}. $$

\medskip

Now consider three cases for which
$$(r',c'L, d'L^2) = (r(E),c_1(E),\ch_2(E))$$ 
is chosen to be the Chern character of a sheaf $E$ on $\bP^2$, where we assume 
in addition that the triple $(r',c',d')$ is {\it primitive} (not an integer multiple of another such triple).

\medskip

\nt {\bf Case 1.} $E = \bC_x$, the skyscraper sheaf, so $(r',c',d') = (0,0,1)$ and:
$$\mu_{s,t}(\bC_x) = +\infty \ \mbox{for all $(s,t)$}$$
In this case each triple $(r,c,d)$ gives a potential vertical wall:
$s = \frac{c}{r}$.
However, we will see below that there are {\bf no} proper nonzero subobjects of $\bC_x$ in $\cA_s$. 
In other words, the skyscraper sheaves are {\it stable} for all 
values of $(s,t)$. 

\medskip

\nt {\bf Case 2.} $E$ is supported in codimension $1$, so $r' = 0$ and $c' > 0$.

\medskip

Walls of type (ii) are all semicircles with the {\bf same} center 
$$\left(\frac{\ch_2(E)}{c_1(E)}, 0\right).$$
Type (i) walls do not occur, 
since $r = 0$ would imply that $cd' = c'd$ at a wall, i.e. that one triple is a scalar multiple of the other. Thus the potential
walls are simply the family of semicircles, centered at a fixed point on the real axis. These semi-circles foliate the upper-half plane

\medskip

\nt {\bf Case 3.} $E$ is a Mumford-stable torsion-free sheaf with $c' = c_1(E) = 0$. 
The rank $r'$ is positive and by the Bogomolov inequality,  $d'  \le 0$. 

\medskip

Here there is one vertical wall, 
with $0 = rc' =r'c$, i.e. $c = 0$ and:
$$s = 0$$

When $c \ne 0$, the potential walls are two families of nested semicircles (with varying centers)\footnote{Maciocia \cite{maciocia} recently proved that Bridgeland walls for smooth projective surfaces of Picard rank one are nested}, one in each of the two quadrants, with centers:
$$\left(x,0\right) \ \mbox{and radius} \ \sqrt{x^2 + \frac{2\ch_2(E)}{r(E)}} \le |x| $$
(recall that $d' = \ch_2(E) \le 0$ is fixed) and 
$$x = \frac{rd' - r'd}{-r'c} = \frac{r\ch_2(E) - r(E)d}{-r(E)c}$$
Notice  that $x\in \bQ$ whenever $(r,c,d)$ are all rational. 

\medskip

We will only be interested in walls lying in the second quadrant $(x < 0)$ because a Mumford-stable torsion-free sheaf
$E$ of degree $0$ only belongs to the category $\cA_s$ if $s <0$.

\medskip

\begin{remark}
 There is little loss of generality in assuming $c_1(E) = 0$ in Case 3. Indeed, if $E$ is a 
Mumford-stable torsion-free sheaf of arbitrary rank $r'$ and degree $c'$, then the set of potential walls  
also consists of a single vertical line at $s = \frac{c'}{r'}$ and nested semicircles on either side. This can be seen most
simply by formally replacing the Chern classes $\ch(E)$ by $\ch(E(-\frac{c'}{r'}))$ (which shifts all walls by $\frac{c'}{r'}$) 
and reducing to Case 3. 
\end{remark}

Proofs of the following proposition already exist in the literature but we reprove it here in 
order to develop some techniques that will be useful later.

\begin{proposition}\label{Stable}\

\medskip

(a) Skyscraper sheaves $\bC_x$ are stable objects for all  (s,t).
 
 \medskip
 
(b) If $E \in \cA_s$ is stable for fixed $s$ and $t >> 0$, then $E \in \cQ_s$ or, if $H^{-1}(E) \ne 0$, then  
$\mH^0(E)$ is a sheaf of finite length.
 
 \medskip
 
(c) Torsion-free sheaves in $\cQ_s$  that are not Mumford semistable are not (s,t)-semistable for large $t$.
 
 \medskip

(d) Line bundles $\cO_{\bP^2}(k)$, $k > s$, are stable objects of $\cA_s$ for all $t$.

\medskip

\end{proposition}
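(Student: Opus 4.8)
The plan is to treat the four statements in sequence, since (b) and (c) feed into (d), and (a) requires a self-contained argument. For part (a), I would show there are no nonzero proper subobjects of $\bC_x$ in $\cA_s$. Any subobject $F\hookrightarrow \bC_x$ in $\cA_s$ gives a long exact sequence in cohomology of the standard $t$-structure; since $\bC_x$ is a sheaf, $\mH^{-1}(\bC_x)=0$, so $\mH^{-1}(F)$ injects into $0$ forcing $\mH^{-1}(F)=0$, hence $F$ is a sheaf in $\cQ_s$. But then $F\subset \bC_x$ as sheaves, and a subsheaf of a skyscraper is either $0$ or all of $\bC_x$. Thus $\bC_x$ is stable (vacuously, having no proper nonzero subobjects) for every $(s,t)$.

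For part (b), suppose $E\in\cA_s$ is stable with $\mH^{-1}(E)=:F\neq 0$. Write the defining triangle $F[1]\to E\to \mH^0(E)=:Q$, with $F\in\cF_s$ torsion-free and $Q\in\cQ_s$. The subobject $F[1]\hookrightarrow E$ in $\cA_s$ has slope $\mu_{s,t}(F[1])$. I would compute $r_t$ and $d_t$ for $F[1]$ and for $E$ and compare: as $t\to\infty$, the term $-\tfrac{t^2}{2}\ch_0$ dominates $d_t$, so $\mu_{s,t}(F[1])$ and $\mu_{s,t}(E)$ both behave like $-\tfrac{t}{2}\cdot(\text{rank ratio})$. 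Since $F[1]$ has $r_t(F[1]) = t\ch_1(F(-s))\cdot L$ which is nonnegative (it lies in $\cA_s$), stability $\mu_{s,t}(F[1])<\mu_{s,t}(E)$ for all large $t$ forces a constraint that, after dividing through, says the rank of $\mH^0(E)$ relative to $F$ must vanish — i.e. $\mH^0(E)$ has rank $0$, so is a torsion sheaf. To upgrade ``torsion'' to ``finite length'': a torsion sheaf with a $1$-dimensional component would contribute $r_t>0$ and one checks the slope inequality fails for $t\gg 0$ against the sub- or quotient structure; hence $\mH^0(E)$ has $0$-dimensional support, i.e. finite length. The main technical care here is keeping track of signs and the $t\to\infty$ asymptotics of the two-variable slope; this is the step I expect to be fiddliest, since one must rule out the $1$-dimensional torsion case as well, possibly by passing to the quotient $E\to Q$ and using that $Q$ is a subobject-free-of-the-problematic-part, or by directly invoking the Harder--Narasimhan comparison.

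For part (c), let $Q\in\cQ_s$ be torsion-free but not Mumford semistable. Then its Mumford HN filtration has a maximal destabilizing subsheaf $Q_1\subset Q$ of strictly larger Mumford slope; since $\mu(Q_1)>\mu(Q)>s$, we have $Q_1\in\cQ_s\subset\cA_s$, so $Q_1\hookrightarrow Q$ in $\cA_s$. I would then compute $\mu_{s,t}(Q_1)$ versus $\mu_{s,t}(Q)$: recalling that the $\cA_s$-slope is (up to the $t$-modification) the negative reciprocal of the Mumford slope of the twist, for $t$ small the ordering of $\cA_s$-slopes is governed by the Mumford slopes, and $\mu(Q_1)>\mu(Q)$ translates into $\mu_{s,t}(Q_1)>\mu_{s,t}(Q)$ for $t\gg0$ after doing the sign bookkeeping — wait, one must be careful: I would expand $\mu_{s,t}(Q_1)-\mu_{s,t}(Q)$ and check its sign as $t\to\infty$, finding it positive, which directly violates semistability of $Q$. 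Hence $Q$ is not $(s,t)$-semistable for large $t$.

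Finally for part (d), fix $k>s$ and consider $\cO_{\PP^2}(k)$, which lies in $\cQ_s$ (its Mumford slope is $k>s$) hence in $\cA_s$, and it is Mumford stable, so parts (b) and (c) do not obstruct it. Suppose $F\hookrightarrow \cO_{\PP^2}(k)$ is a destabilizing subobject in $\cA_s$ for some $(s,t)$. By part (b) applied after noting stability would need to be checked, or more directly by a limiting argument: if $\cO_{\PP^2}(k)$ were strictly semistable or unstable somewhere, take a Jordan--Hölder or HN factor; using that walls are locally finite (nested semicircles, from \S\ref{PotentialWalls}) and that for $t\gg0$ the line bundle is stable (which follows from (b),(c) together with the fact that a torsion-free Mumford-stable sheaf in $\cQ_s$ with no finite-length $\mH^0$ has no destabilizer for large $t$ — one shows a sub-object $F$ in $\cA_s$ would have $\mH^0(F)$ a subsheaf of $\cO_{\PP^2}(k)$, i.e. $\cI_W(k)$ for some subscheme $W$, and $\mH^{-1}(F)$ a torsion-free sheaf in $\cF_s$; computing slopes and using $k>s$ shows the slope inequality $\mu_{s,t}(F)<\mu_{s,t}(\cO(k))$ holds for \emph{all} $t$, not just large $t$), we conclude stability holds for all $t$. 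The crux is the direct slope computation: writing $\ch(F)=(r,cL,dL^2)$ with the constraints coming from $\mH^0(F)=\cI_W(k)$ and $\mH^{-1}(F)\in\cF_s$, one verifies the defining inequality of the wall $W_{(1,k,k^2/2),(r,c,d)}$ is never met in the region $s<k$, $t>0$; I expect this final inequality-chase, together with correctly identifying the constraints on $(r,c,d)$ forced by membership in $\cA_s$ and by the inclusion into a line bundle, to be the main obstacle.
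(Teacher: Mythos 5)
Your sketches of (b) and (c) follow essentially the same asymptotic slope comparison the paper uses (compare $\mu_{s,t}$ of $\mH^{-1}(E)[1]$ against the quotient, respectively of the maximal Mumford-destabilizing subsheaf against $E$, as $t\to\infty$), and they go through. The two problems are in (a) and, more seriously, in (d).

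In (a), the step ``$F$ is a sheaf in $\cQ_s$, but then $F\subset\bC_x$ as sheaves'' is unjustified: a subobject of a sheaf in $\cA_s$ is indeed a sheaf, but the induced sheaf map $F\to\bC_x$ need not be injective — its kernel reappears as $\mH^{-1}$ of the quotient object $B$. You must rule out $\mH^{-1}(B)\neq 0$. The paper does this cheaply by observing that a \emph{destabilizing} subobject of $\bC_x$ must have $\mu_{s,t}=+\infty$, hence be a finite-length torsion sheaf, whose nonzero subsheaf $\mH^{-1}(B)$ would then be torsion, contradicting $\mH^{-1}(B)\in\cF_s$. (Your stronger claim that $\bC_x$ has no proper nonzero subobjects is in fact true, but proving it requires comparing the Mumford slopes of $F$ and of $\ker(F\to\bC_x)$, which are equal since the image has finite length, against the defining conditions of $\cQ_s$ and $\cF_s$ — an argument you do not supply.)

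In (d) you have correctly identified the crux and then left it undone. The difficulty is precisely that a subobject $A\subset\cO_{\PP^2}(k)$ in $\cA_s$ is \emph{not} a subsheaf: it is a sheaf mapping to $\cO_{\PP^2}(k)$ with image $\cI_W(k-m)$ and a torsion-free kernel lying in $\cF_s$, and a priori $A$ can have arbitrarily large rank, so there is no finite list of Chern characters $(r,c,d)$ to check against the wall equation. The paper's proof supplies the missing idea: take $A$ of \emph{minimal rank} among destabilizers at $(s_0,t_0)$, observe that the potential wall $W$ through $(s_0,t_0)$ for $\cO_{\PP^2}$ is a single semicircle through the origin along which the destabilizing inequality persists, and then argue in two cases — if $A\in\cQ_s$ for all $s<0$ along $W$ one gets $\ch_1(A)\ge 0$, forcing the kernel to have $\ch_1\ge 0$, contradicting membership in $\cF_{s_0}$ with $s_0<0$; if not, the first Harder--Narasimhan quotient of $A$ whose slope is hit along $W$ produces a destabilizer of strictly smaller rank, contradicting minimality. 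Without some device of this kind (minimal-rank destabilizer plus transport along the wall, essentially the content of Lemma \ref{every-point-wall}), the ``inequality-chase'' you defer cannot even be set up, because the set of candidate $(r,c,d)$ is unbounded.
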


\begin{proof} A short exact sequence $0 \rightarrow A \rightarrow E \rightarrow B \rightarrow 0$ of objects of $\cA_s$ gives 
rise to a long exact sequence of coherent sheaves:
$$0 \rightarrow \mH^{-1}(A) \rightarrow \mH^{-1}(E) \rightarrow \mH^{-1}(B) \rightarrow 
\mH^{0}(A) \rightarrow \mH^{0}(E) \rightarrow \mH^{0}(B) \rightarrow 0$$
with $\mH^{-1}(*) \in \cF_s$ and $\mH^0(*) \in \cQ_s$. In particular, if $E = \mH^0(E) \in \cQ_s$ is a coherent sheaf (i.e., $\mH^{-1}(E)=0$), 
then $A = \mH^0(A)$ is also a coherent sheaf in $\cQ_s$.

\medskip

Let $E = \bC_x$. Then $\mu_{s,t}(E) = +\infty$ for any $(s,t)$. Thus $E$ is either
$(s,t)$-semi-stable or stable, and if it is not stable, then there is a short exact sequence as above with 
$B \ne 0$,
$\mu_{s,t}(A) = +\infty$ and $A \in \cQ_s$. But the only such sheaves $A$ are torsion, supported in dimension zero. 
Thus the long exact sequence in cohomology gives:
$$0 \rightarrow \mH^{-1}(B) \rightarrow A \rightarrow \bC_x \rightarrow \mH^0(B) \rightarrow 0$$
and then $\mH^{-1}(B)$ is also torsion, violating $\mH^{-1}(B) \in \cF_s$.

\medskip

Next, suppose $E \in \cA_s$ and both $\mH^{-1}(E) \ \mbox{and}\ \mH^0(E)$ are nonzero. The cohomology sheaves
form a short exact sequence of objects of $\cA_s$:
$$0 \rightarrow \mH^{-1}(E)[1] \rightarrow E \rightarrow \mH^0(E) \rightarrow 0$$
As $t \rightarrow \infty$:

\medskip

$\mu_{s,t}(\mH^{-1}(E)[1]) \rightarrow +\infty$, whereas

\medskip

$\mu_{s,t}(\mH^{0}(E)) \rightarrow -\infty$ if $r(\mH^0(E)) > 0$,

\medskip

$\mu_{s,t}(\mH^{0}(E)) \rightarrow 0$ if $r(\mH^0(E)) = 0$ and $c_1(\mH^0(E)) > 0$, and

\medskip

$\mu_{s,t}(\mH^{0}(E)) \equiv +\infty$ if $r = c_1 = 0$. 

\medskip

Thus for large values of $t$, the inequality 
$\mu_{s,t}(\mH^{-1}(E)[1]) > \mu_{s,t}(\mH^0(E))$ would destabilize $E$ 
unless $\mH^{-1}(E)[1]$ is the zero sheaf or else $\mH^0(E)$ satisfies $r = c_1 = 0$ . This gives (b).

\medskip

Similar considerations give (c). If $E \in \cQ_s$ is not Mumford semistable, consider a sequence of sheaves:
$$0 \rightarrow A \rightarrow E \rightarrow B \rightarrow 0$$
with Mumford slopes $\mu(A) > \mu(E) > \mu(B)$.

\medskip

The slope  of $B$ satisfies $\mu(B) > s$ (otherwise $E \not \in \cQ_s$) and 
as above, the limiting $\mu_{s,t}$ slopes (to first order in $t$) for large $t$ show that $E$ is not $(s,t)$-semistable for large $t$. One can even refine the argument to show that if $E$ is not Gieseker-semistable 
and either $E$ is torsion or else $E$ is Mumford-semistable, then the higher
order term (in $t$) exhibits the instability of $E$ for large $t$. Thus $(s,t)$-stability of sheaves in $\cQ_s$ for large $t$ is equivalent to Gieseker stability.

\medskip

Let $s_0 < 0$, and suppose that $\cO_{\bP^2}$ is not $(s_0,t_0)$-semistable. Let $A \subset \cO_{\bP^2}$ 
have minimal rank among all subobjects in $\cA_{s_0}$ satisfying $\mu_{s_0,t_0}(A) > \mu_{s_0,t_0}(\cO_{\bP^2})$. Consider the (unique!) potential wall $W := W_{((r,c,d),(1,0,0))}(s_0,t_0)$
passing through $(s_0,t_0)$. By Case 3,
$W$ is a semicircle centered on the 
$x$-axis in the second quadrant, passing through the origin.
It follows that $\mu_{s,t}(A) > \mu_{s,t}(\cO_{\bP^2})$
at {\bf all} points $(s,t) \in W$. If $A \in \cQ_s$ for all $s < 0$, then each term in the 
Harder-Narasimhan filtration of $A$ with respect to the Mumford slope has slope $\ge 0$. In particular, $\ch_1(A) \ge 0$, and the 
kernel of the map $A \rightarrow \cO_{\bP^2}$ (which must map onto a sheaf with $\ch_1 = 0$) also 
satisfies $\ch_1(\mH^{-1}(B)) \ge 0$. But this contradicts the fact that $\mH^{-1}(B) \in \cF_{s_0}$ with 
$s_0 < 0$. 

\medskip

On the other hand, if $A\not\in \cQ_s$ 
for some $s < 0$, let $s_0 < s' < 0$ be the smallest such $s$. Then by assumption there is a quotient sheaf $A'$ of $A$ with $\mu(A') = s'$, and 
it follows that $\lim_{(s,t) \rightarrow (s',t')} \mu_{s,t}(A') = -\infty$ (taken along the wall $W$). The kernel sheaf $A'' \subset A$ of the map to $A'$ is then nonzero, the map $A'' \rightarrow \cO_{\bP^2}$ is also nonzero and determines a destabilizing subobject of $\cO_{\bP^2}$ at  $(s_0,t_0)$, contradicting our assumption on the minimality of the rank of $A$ among all destabilizing subobjects. This proves that $\cO_{\bP^2}$ is stable
at an arbitrary point $(s_0,t_0)$, and
since tensoring by $\cO_{\bP^2}(k)$ translates the upper half plane, it follows
that all line bundles are stable for all values of $(s,t)$ with $s< k$. This gives (d). 
\end{proof}

\medskip

A potential wall $W_{(r,c,d),\ch(E)}$ for $E$ will be an {\it actual} wall if the equality of $(s,t)$-slopes is realized 
by a semi-stable subobject $A \subset E$ with $\ch(A) = (r,c,d)$ at {\it some} point of the wall. When $E$ is a coherent sheaf, the rank of a 
subsheaf of $E$ is bounded by the rank of $E$. Unfortunately, while a subobject $A \subset E$ in one of the 
categories $\cA_s$ is necessarily also a sheaf, it may not be a subsheaf and indeed it may, a priori, be a sheaf of 
arbitrarily large rank. The following lemma will be useful for bounding such subobjects.  

\medskip

\begin{lemma}\label{every-point-wall}

Let $E$ be a coherent sheaf on $\bP^2$ (not necessarily Mumford-semistable) of positive rank
with $\ch_1(E) = 0$ satisfying:
$$\ch_2(E) < 0$$
and suppose $A \rightarrow E$ is a map of coherent sheaves which is an inclusion of 
$\mu_{s_0,t_0}$-semi-stable objects of $\cA_{s_0}$ of the same slope for some
$$(s_0,t_0) \in W := W_{(\ch(A),\ch(E))}$$

\medskip

\nt Then $A \rightarrow E$ is an inclusion of $\mu_{s,t}$-semi-stable objects of $\cA_s$ of the same slope
for {\bf every} point $(s,t) \in W$. 

\end{lemma}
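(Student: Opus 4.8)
The plan is to show that the locus of points on the potential wall $W$ at which $A \to E$ fails to be an inclusion of semistable objects of equal slope is both open and closed in $W$, so that by connectedness of $W$ (a semicircle, hence connected) it is all of $W$ once we know it contains $(s_0,t_0)$. The crucial structural input is that along a single potential wall $W_{(r,c,d),(r',c',d')}$ the \emph{ordering} of $\mu_{s,t}$-slopes of objects with these two fixed Chern characters is constant: indeed the wall is precisely the locus where the two slopes agree, and since walls for a fixed pair of Chern characters do not cross (they are the single family described in \S\ref{PotentialWalls}), on each side of $W$ the sign of $\mu_{s,t}(A)-\mu_{s,t}(E)$ is constant. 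More is true and more useful here: for \emph{any} object $B$ with fixed Chern character, the function $(s,t)\mapsto \mu_{s,t}(\ch B)$ restricted to $W$ varies continuously, and the relevant comparisons along $W$ between $\ch(A)$, $\ch(E)$, and the Chern characters of sub/quotient objects are governed by the same wall equation; one checks that whether a hypothetical destabilizing subobject $A'\subset A$ (or $A'\subset E$) has slope exceeding that of $A$ is detected by which side of its own potential wall we sit on, and these walls are nested semicircles that either contain $W$, are contained in $W$, or are disjoint from $W$ — in none of these cases can the comparison change sign as $(s,t)$ moves within $W$.

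First I would record the elementary fact that $A$ and $E$ remain objects of $\cA_s$ for all $(s,t)\in W$: since $\ch_1(E)=0$ and $\ch_2(E)<0$, the category membership of $E$ depends only on $s$, and $W$ (a semicircle through the origin in the second quadrant, by Case 3) has every point with $s<0$, which is exactly the range in which the relevant torsion-free sheaves of degree $0$ lie in $\cA_s$; a similar bookkeeping argument, using that $A$ was in $\cA_{s_0}$ and that $A \hookrightarrow E$ in $\cA_{s_0}$, pins down $\mH^{-1}(A)$ and $\mH^0(A)$ and shows $A\in\cA_s$ throughout $W$. Next I would verify that $A\to E$ remains a \emph{monomorphism} in $\cA_s$ for all $(s,t)\in W$: the cokernel in $\cA_{s_0}$ is some object $B$, and one checks that the condition ``$B\in\cA_s$ and $0\to A\to E\to B\to 0$ is exact in $\cA_s$'' is again governed only by $s$ together with the vanishing $\Hom(\mH^0(A),\mH^{-1}(B))=0$ type constraints coming from the torsion pair, which persist along $W$. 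Then the equality of slopes $\mu_{s,t}(A)=\mu_{s,t}(E)$ for all $(s,t)\in W$ is immediate, since that is the defining equation of $W$.

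The substance is semistability. I would argue by contradiction: suppose $A$ (say) is not $\mu_{s_1,t_1}$-semistable for some $(s_1,t_1)\in W$, so there is $A'\subsetneq A$ in $\cA_{s_1}$ with $\mu_{s_1,t_1}(A')>\mu_{s_1,t_1}(A)$. The key point is that $A'$ has bounded rank — this is where Lemma \ref{every-point-wall}'s hypotheses ($E$ coherent of positive rank, so $A$ is a sheaf of bounded rank by the monomorphism property, hence $A'$ as a subobject of the sheaf $A$ in $\cA_{s_1}$ is itself a sheaf of rank at most $\rk A$) — and therefore $A'$ ranges over a bounded family of Chern characters. For each such character $\ch(A')$, the potential wall $W_{(\ch(A'),\ch(A))}$ is a semicircle, and since these walls are nested and non-crossing with $W$, the semicircle $W_{(\ch(A'),\ch(A))}$ meets $W$ in at most finitely many points; away from those points the sign of $\mu_{s,t}(A')-\mu_{s,t}(A)$ is locally constant on $W$. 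Running this for $(s_0,t_0)$: at $(s_0,t_0)$ we have $\mu_{s_0,t_0}(A')\le\mu_{s_0,t_0}(A)$ by semistability of $A$ there, so by constancy of sign the strict inequality at $(s_1,t_1)$ forces $(s_1,t_1)$ and $(s_0,t_0)$ to lie on opposite sides of $W_{(\ch(A'),\ch(A))}\cap W$; but then a standard ``the destabilizing subobject at $(s_1,t_1)$ already destabilizes near $(s_0,t_0)$'' argument — pushing $A'$ through $W_{(\ch(A'),\ch(A))}$, which separates $W$ into at most two arcs — yields a contradiction with semistability of $A$ at $(s_0,t_0)$. The same reasoning applied to subobjects of $E$ handles semistability of $E$.

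I expect the main obstacle to be the bounded-rank control over destabilizing subobjects $A'$: a priori a subobject in $\cA_{s_1}$ of an object need not be a subsheaf, so one must carefully use that $A$ is a \emph{sheaf} (forced by $A\hookrightarrow E$ with $E$ a coherent sheaf and the monomorphism being in $\cA_s$) to conclude that $A'$, as a subobject of the sheaf $A$, is again a sheaf of rank at most $\rk A$. Once that boundedness is in hand, the nestedness of walls (from \S\ref{PotentialWalls}) does all the remaining work, and the open-closed/connectedness packaging is routine. A secondary technical point, worth stating as a lemma, is the continuity and the explicit sign behavior of $\mu_{s,t}(\ch_1)-\mu_{s,t}(\ch_2)$ across a potential wall, which follows directly from the quadratic wall equation displayed in \S\ref{PotentialWalls}.
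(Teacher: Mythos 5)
Your overall mechanism — propagate slope comparisons along $W$ using the fact that the potential walls for a fixed Chern character are pairwise disjoint nested semicircles, then contradict semistability at $(s_0,t_0)$ — is exactly the engine of the paper's proof. But there are two genuine gaps at the points you dismiss as bookkeeping, and they are precisely where the paper's proof does its work.

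First, your claim that $E\in\cQ_s$ for all $(s,t)\in W$ because $\ch_1(E)=0$ and every point of $W$ has $s<0$ is false as stated: the lemma explicitly allows $E$ to be non-Mumford-semistable, and a torsion-free sheaf of degree $0$ lies in $\cQ_s$ only for $s<\mu_{\min}(E)$, where $\mu_{\min}(E)$ is the smallest Mumford slope in its Harder--Narasimhan filtration — which is strictly negative whenever $E$ is not semistable. So there may a priori be points of $W$ with $\mu_{\min}(E)\le s<0$ where $E\notin\cQ_s$. Ruling this out is the first paragraph of the paper's proof: if the set of such points is nonempty it has an infimum $s'>s_0$, at which the last HN quotient $E'$ satisfies $\mu(E')=s'$, so $\mu_{s,t}(E')\to-\infty$ along $W$ as $(s,t)\to(s',t')$ and hence the HN sub $E''$ satisfies $\mu_{s,t}(E'')>\mu_{s,t}(E)$ near $(s',t')$; disjointness of the walls $W_{*,\ch(E)}$ then propagates this strict inequality back to $(s_0,t_0)$ and contradicts semistability there. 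The identical mechanism (not a torsion-pair formality) is what forces $A\in\cQ_s$ and $\rH^{-1}(E/A)\in\cF_s$ along all of $W$; your "one checks that \dots which persist along $W$" is exactly the statement that needs this argument.

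Second, your semistability step is circular at the delicate point. You take a destabilizer $A'\subset A$ in $\cA_{s_1}$, transport the slope inequality to $(s_0,t_0)$, and then invoke "$\mu_{s_0,t_0}(A')\le\mu_{s_0,t_0}(A)$ by semistability of $A$ there" — but semistability at $(s_0,t_0)$ only constrains subobjects \emph{in $\cA_{s_0}$}, and $A'$ is only known to be a subobject in $\cA_{s_1}$; subobject-hood is category-dependent and does not automatically travel along $W$. (Your bounded-rank observation does not resolve this.) The paper avoids the issue by only ever producing destabilizers that are canonical pieces of HN filtrations of honest sheaves, whose membership in every relevant $\cA_s$ can be tracked directly. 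Relatedly, your sign analysis is weaker than what is true and than what you need: since $\mu_{s,t}(A)=\mu_{s,t}(E)$ identically on $W$, the comparison of $\mu_{s,t}(A')$ with $\mu_{s,t}(A)$ is governed by $W_{\ch(A'),\ch(E)}$, which lies in the disjoint nested family of walls for $\ch(E)$ and is therefore either equal to $W$ or disjoint from it — the sign never changes on $W$, so your "opposite sides of $W_{\ch(A'),\ch(A)}\cap W$" scenario cannot occur, and the subsequent appeal to a "standard argument" is doing no work.
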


\begin{proof} First, we prove that $E \in \cQ_s$ for all $(s,t) \in W$. Since $E \in \cQ_{s_0}$ by assumption, it follows
that $E \in \cQ_s$ for all $s \le s_0$. The set of $(s,t) \in W$ such that $E \not\in \cQ_{s}$, if non-empty, has a well-defined infimum $s' > s_0$, and then by definition, the end of the Harder-Narasimhan filtration for $E$ must be:
$$0 \rightarrow E'' \rightarrow E \rightarrow E' \rightarrow 0$$
with $\mu(E') = s'$. But then $\lim_{(s,t) \rightarrow (s',t')^-} \mu_{s,t}(E') = -\infty$ and so in particular, 
$\mu_{s,t}(E'') > \mu_{s,t}(E)$ for $(s,t)$ near $(s',t')$ on the wall. But the walls for $E$ are disjoint (as in Case 3 above), and it follows that 
$\mu_{s,t}(E'') > \mu_{s,t}(E)$ for {\bf all} $(s,t)$ on $W$, including $(s_0,t_0)$, contradicting the assumption that $E$ is 
$(s_0,t_0)$-semistable. 
Similarly, we may conclude that $A \in \cQ_s$ for all $(s,t) \in W$, since otherwise $A$ would admit a subsheaf 
$A''$ that destabilizes $E$ at $(s',t')$ and hence also at $(s_0,t_0)$. 

\medskip

Next, consider the exact sequence of cohomology sheaves:
$$0 \rightarrow \rH^{-1}(B) \rightarrow A \rightarrow E \rightarrow \rH^0(B) \rightarrow 0$$
for $B = E/A$ in $\cA_{s_0}$. It is immediate that $\rH^0(B) \in \cQ_s$ for all $(s,t) \in W$, since the quotient of a 
sheaf in $\cQ_s$ is also in $\cQ_s$. The issue is to show that $\rH^{-1}(B) \in \cF_s$ for all $s < s_0$ and $(s,t) \in W$. But this follows the same argument as the previous paragraph. If not, then there is a Harder-Narasimhan filtration starting with:
$$0 \rightarrow F'' \rightarrow \rH^{-1}(B) \rightarrow F' \rightarrow 0$$
with $F''$ Mumford semistable, $\mu(F'') = s''$ and $(s'',t'') \in W$. Then:
$$0 \rightarrow F''[1] \rightarrow B \rightarrow B' \rightarrow 0$$
has $\mu_{s'',t''}(B') < \mu_{s'',t''}(B) = \mu_{s'',t''}(E)$, which violates the assumption that $E$ is 
$\mu_{s_0,t_0}$-semistable. 
\end{proof}

The following corollary is immediate:

\begin{corollary}\label{main-cor}
Suppose $E$ is a coherent sheaf as in Lemma \ref{every-point-wall}, and let $(s_1,0)$ and $(s_2,0)$, with $s_1<s_2$, be the two 
intersection points of the (semi-circular) wall $W$ with the $x$-axis. Let $K = \ker(A \rightarrow E)$
be the kernel sheaf. Then:
$$K \in \cF_{s} \ \mbox{and}\ A \in \cQ_{s}$$ for every $s_1< s< s_2$.
\end{corollary}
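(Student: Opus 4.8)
The plan is to observe that the corollary is just a repackaging of what is already proved inside Lemma \ref{every-point-wall}, the one new ingredient being a change of variable from points of the semicircular wall $W$ to values of the parameter $s$. Throughout I would keep the notation of that lemma: $E$ has positive rank, $\ch_1(E)=0$, $\ch_2(E)<0$, and $A\to E$ is an inclusion of $\mu_{s_0,t_0}$-semistable objects of $\cA_{s_0}$ of equal slope for some $(s_0,t_0)\in W$.

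First I would record the elementary geometry of the wall. The set $W$ is the open upper half of a circle centered on the real axis whose closure meets the $x$-axis exactly at $(s_1,0)$ and $(s_2,0)$; hence the projection $(s,t)\mapsto s$ carries $W$ onto the open interval $(s_1,s_2)$. Because the torsion pair $(\cF_s,\cQ_s)$ --- and therefore every assertion of the form $A\in\cQ_s$ or $K\in\cF_s$ --- depends only on $s$ and not on $t$, the quantifier ``for all $(s,t)\in W$'' may be replaced verbatim by ``for all $s$ with $s_1<s<s_2$''.

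Next I would harvest the two assertions from the proof of Lemma \ref{every-point-wall}. That proof shows outright that $A\in\cQ_s$ for every $(s,t)\in W$ (the step beginning ``Similarly, we may conclude that $A\in\cQ_s$\dots''), which by the previous paragraph is precisely $A\in\cQ_s$ for all $s_1<s<s_2$. For the kernel sheaf, note that since $A$ and $E$ are honest coherent sheaves the cohomology long exact sequence attached to $0\to A\to E\to B\to 0$ in $\cA_{s_0}$, with $B=E/A$, collapses to the four-term sequence of sheaves $0\to\rH^{-1}(B)\to A\to E\to\rH^0(B)\to 0$ written down in the lemma; thus $K=\ker(A\to E)=\rH^{-1}(B)$. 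The last portion of that proof establishes exactly $\rH^{-1}(B)\in\cF_s$ for all $(s,t)\in W$ --- automatic for $s\geq s_0$ since $\cF_s$ only enlarges as $s$ grows, and by the Harder--Narasimhan argument given there for $s<s_0$ --- and hence $K\in\cF_s$ for all $s_1<s<s_2$.

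There is essentially no obstacle here; the only point needing a moment's care is the translation between ``points of $W$'' and ``values of $s$'', i.e. the observation that the open upper semicircle surjects onto the open diameter together with the $t$-independence of $\cF_s$ and $\cQ_s$. In particular one must not claim the conclusion at the endpoints $s=s_1,s_2$, where the relevant value of $t$ is $0$ and the categories $\cA_s$ as used in the discussion are not in play.
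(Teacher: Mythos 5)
Your proposal is correct and is exactly the argument the paper intends: the paper declares the corollary ``immediate'' from Lemma \ref{every-point-wall}, and you have simply made explicit the two facts already established in that lemma's proof ($A\in\cQ_s$ and $\rH^{-1}(B)=K\in\cF_s$ for all $(s,t)\in W$) together with the translation from points of the semicircle to the interval $(s_1,s_2)$ via the $t$-independence of the torsion pair. Nothing further is needed.
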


The next corollary completes the circle of ideas from Proposition \ref{Stable}:

\medskip

\begin{corollary}\label{stable-Mum}
Mumford-stable torsion-free coherent sheaves $E \in \cQ_s$ are $(s,t)$-stable objects of $\cA_s$ for $t >> 0$.  
\end{corollary}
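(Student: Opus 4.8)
\emph{Proof proposal.} The statement is really the positive half of the principle, already sketched in the proof of Proposition~\ref{Stable}(c), that for $t\gg 0$ the notion of $(s,t)$-stability for sheaves in $\cQ_s$ coincides with Gieseker stability; since a Mumford-stable torsion-free sheaf is Gieseker-stable, this gives the claim. The only real work is to make this rigorous, because a subobject of $E$ in $\cA_s$ need not be a subsheaf and can a priori have arbitrarily large rank, so one cannot argue purely numerically. I would first make the standard reductions: tensoring by $\OO_{\PP^2}(k)$ is an autoequivalence carrying $\cA_s$ to $\cA_{s+k}$ and preserving both Mumford-stability and $(s,t)$-stability (the characters $\ch(E(-s))$ are unchanged), so after twisting (formally, by $\OO_{\PP^2}(-\lfloor c_1(E)\cdot L/\rk E\rfloor)$, or using the rational shift as in the Remark following Case~3) we may assume $0\le c_1(E)\cdot L<\rk E$, hence $E\in\cQ_s$ forces $s<0$; if $E$ is a line bundle we are done by Proposition~\ref{Stable}(d), and otherwise the Bogomolov inequality forces $\ch_2(E)<0$. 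So assume $E\in\cQ_s$ with $\ch_1(E)=0$, $\ch_2(E)<0$, and suppose $E$ fails to be $(s,t)$-stable for a sequence $t_m\to\infty$; pick destabilizing subobjects $A_m\hookrightarrow E$ in $\cA_s$ with $\mu_{s,t_m}(A_m)\ge\mu_{s,t_m}(E)$, $A_m\ne 0,E$.

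The main step is to reduce to honest subsheaves. As in the long exact cohomology sequence used throughout \S\ref{PotentialWalls}, $A_m$ is a sheaf in $\cQ_s$, and since a torsion sheaf in $\cQ_s$ mapping to the torsion-free $E$ would be zero, $A_m$ is torsion-free of positive rank. The map $A_m\to E$ has kernel sheaf $K_m=\rH^{-1}(E/A_m)\in\cF_s$ and image a genuine subsheaf $A_m'\subseteq E$, which again lies in $\cQ_s$ (a quotient of $A_m$) and hence is again a subobject of $E$ in $\cA_s$, fitting into a short exact sequence $0\to A_m\to A_m'\to K_m[1]\to 0$ in $\cA_s$. Because all Mumford slopes of $K_m$ are $\le s<0$ one computes $r_t(K_m[1])\ge 0$ and $\mu_{s,t}(K_m[1])\to+\infty$ as $t\to\infty$, while $\mu_{s,t}(A_m)\to-\infty$; so for $t_m$ large $\mu_{s,t_m}(K_m[1])\ge\mu_{s,t_m}(A_m)$, and the mediant inequality for the displayed sequence gives $\mu_{s,t_m}(A_m')\ge\mu_{s,t_m}(A_m)\ge\mu_{s,t_m}(E)$. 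Thus we may assume $A_m=A_m'\subseteq E$ is a proper nonzero subsheaf lying in $\cQ_s$; alternatively, one can reach the same conclusion via Lemma~\ref{every-point-wall} and Corollary~\ref{main-cor}, which force $K_m\in\cF_{s'}$ for $s'$ arbitrarily negative once the defining wall has large radius.

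Finally, the bounded computation. A subsheaf $A\subseteq E$ has $\rk A\le\rk E$, and $A\in\cQ_s$ together with $A$ being a subsheaf of the Mumford-semistable $E$ of slope $0$ gives $s\,\rk E<\ch_1(A)\cdot L\le 0$; moreover the Mumford slopes of $A$, hence $\ch_1(A)\cdot L$, and (by boundedness of subsheaves of a fixed sheaf with bounded rank and bounded slopes) the relevant values of $\ch_2(A)$ lie in a finite set — with the remaining subsheaves, those with $\ch_2(A)$ very negative, failing to destabilize for \emph{all} $t$. Writing $\mu_{s,t}(A)-\mu_{s,t}(E)=\bigl(t\,r_t(A)r_t(E)\bigr)^{-1}\bigl(-\tfrac{t^2}{2}P+Q\bigr)$ with $P=-\rk(E)\,\ch_1(A)\cdot L\ge 0$ and $Q$ depending linearly on $\ch_2(A)$, the $t^2$-coefficient is $\le 0$, and when $\ch_1(A)\cdot L<0$ the difference is negative for $t$ past a bound depending only on the finitely many Chern characters above. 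When $\ch_1(A)\cdot L=0$, Mumford \emph{stability} of $E$ forces $\rk A=\rk E$ with $E/A$ of length $\ell>0$, so $\ch_2(A)=\ch_2(E)-\ell$, $\rho_A=\rho_E$, and a short computation yields $\mu_{s,t}(A)-\mu_{s,t}(E)<0$ for every $t>0$. Taking $T$ larger than all these bounds contradicts $\mu_{s,t_m}(A_m)\ge\mu_{s,t_m}(E)$ for $t_m>T$, so $E$ is $(s,t)$-stable for $t\gg 0$.

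The hard part is the middle paragraph: subobjects in $\cA_s$ can have unbounded rank, so the key device is to pass from $A$ to its image subsheaf in $E$, using that the ``excess'' $K[1]$ becomes arbitrarily $\mu_{s,t}$-unstable for large $t$ and so this replacement cannot lower the slope — this is exactly what brings the problem into a bounded family where a direct Chern-character estimate suffices. Mumford \emph{stability}, as opposed to mere semistability, is used only in the last computation, precisely where a strictly $\mu$-semistable sheaf would acquire an actual wall and the corollary would fail.
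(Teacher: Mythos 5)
Your reductions, the line-bundle case via Proposition \ref{Stable}(d), and the final Chern-character computation for honest subsheaves are all sound (and that last computation, including the full-rank case $\ch_1(A)=0$ where $E/A$ has finite length, is carried out more explicitly than in the paper, which disposes of subsheaves by bounding the wall through $(-1/\ch_0(E),0)$). The problem is the middle paragraph, which is exactly where you correctly locate the difficulty. The replacement of $A_m$ by its image subsheaf $A_m'$ rests on the inequality $\mu_{s,t_m}(K_m[1])\ge\mu_{s,t_m}(A_m)$, which you justify by letting $t\to\infty$ with $K_m$ and $A_m$ \emph{fixed}. But the threshold beyond which that inequality holds depends on the invariants of $K_m$, and these are not bounded as $m$ varies. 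Concretely, if $K_m$ has a factor of slope $-N$ with $N\gg t_m^2/|s|$ (say $\cO_{\bP^2}(-N)$), then $d_{t_m}(K_m[1])=\tfrac{t_m^2}{2}\rk K_m-\ch_2(K_m(-s))<0$ and $\mu_{s,t_m}(K_m[1])\approx -N/(2t_m)$, which lies \emph{below} $\mu_{s,t_m}(A_m)\ge\mu_{s,t_m}(E)\approx t_m/(2s)$; the see-saw then pushes $\mu_{s,t_m}(A_m')$ down rather than up, and the reduction to subsheaves fails. So the argument is circular: it needs an a priori bound on $\mu(K_m)$ in terms of $t_m$, which is essentially the content of the statement being proved.

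The missing ingredient is the step the paper uses to close precisely this loop. Since $E$ is Mumford (semi)stable of slope $0$, every quotient of $E$ has nonnegative $c_1$, hence $c_1(A)\le c_1(K)$; combined with $\ch_0(A)\le\ch_0(K)+\ch_0(E)$ this gives $\mu(A)\le \mu(K)\,\rk K/(\rk K+\rk E)$. Feeding this into Corollary \ref{main-cor} (which gives $\mu(K)\le s_1$ and $\mu(A)>s_2$ for the endpoints $s_1<s_2$ of the wall) shows that either $\mu(K)>-(\rk E+1)$, so $s_1$ is bounded below, or else $\mu(A)\le -1$, so $s_2$ is bounded above; in both cases the wall is trapped inside a fixed semicircle, which is the uniform bound. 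Your parenthetical appeal to Lemma \ref{every-point-wall} and Corollary \ref{main-cor} points at the right mechanism but omits the $c_1(A)\le c_1(K)$ estimate, and without it neither the mediant argument nor the wall argument closes. Note also that once you insert this estimate you are running the paper's proof, and the separate subsheaf analysis in your last paragraph becomes redundant.
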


\begin{proof} When $E = \cO_{\bP^2}(k)$ is a line bundle, the Corollary follows from Proposition \ref{Stable} (d) . When $E$ has higher rank, then the 
Bogomolov inequality is sharp, and moreover, by (formally) twisting by $-\mu(E)$, we may as well 
assume that $\ch_1(E) = 0$ and $\ch_2(E) < 0$.  Observe that finiteness of the Harder-Narasimhan filtration implies that given any Mumford-stable torsion-free sheaf $E \in \cQ_s$, we can find a $t$ such that $E$ is $(s,t)$ Bridgeland stable. Here we show that $t$ can be chosen uniformly depending only on the invariants of $E$.
\medskip

Suppose $A \rightarrow E$ satisfies the conditions of Lemma \ref{every-point-wall} for some value $(s,t) \in W$. Our strategy is to find a uniform upper bound on the diameter of the (semi-circular) wall $W$. It will follow that above such a wall, $E$ must be $(s,t)$-stable. 

\medskip

We separate into two cases:

\medskip

(i) $A \subset E$ is a subsheaf, necessarily satisfying $\mu(A) < \mu(E) = 0$. In fact, we can say more, namely 
that $\mu(A) \le - 1/\ch_0(E)$. But then by Lemma \ref{every-point-wall}, it follows that no wall $W_{\ch(A),\ch(E)}$ that exhibits $E$ as a semi-stable object may extend past $s = - 1/\ch_0(E)$ on the $x$-axis. It follows that for {\bf all} values of $(s,t)$ above the wall $W_{(*,\ch(E))}$ passing through $(- 1/\ch_0(E),0)$, the stable vector bundle $E$ may {\bf not} be destabilized by a subsheaf of $E$. This is because for $t >> 0$, any given subsheaf $A \subset E$ of smaller Mumford slope has smaller $(s,t)$-slope. Thus if $A \subset E$ destabilized $E$ for some value $(s,t)$, then there would necessarily be a wall above $(s,t)$ for which Lemma \ref{every-point-wall} applies. 

\medskip

(ii) $A \rightarrow E$ has nontrivial kernel $K \subset A$. Let $(s_1,0)$ and $(s_2,0)$ be the intersections 
of $W_{A,E}$ with the $x$-axis. Then by Corollary 3.2,
$$\mu(K) \le s_1 \ \mbox{and}\ \mu(A) > s_2.$$

Now suppose $\mu(K) \le -(r(E)+1)$. Since $E$ is Mumford stable, all quotients of $E$ have non-negative $c_1$. Hence, $c_1(A) \le c_1(K)$. Combining this with:
$$ \ch_0(A) \le \ch_0(K) + \ch_0(E)$$
it follows that:
$$\mu(A) = \frac{c_1(A)}{\ch_0(A)} \le \frac{c_1(K)}{\ch_0(A)} \le \frac{\ch_0(K)\mu(K)}{\ch_0(K) + \ch_0(E)} \le -1$$

This means that {\bf any} semicircular wall for such an $A$ must be bounded by the larger of the 
wall through $(-(r+1),0)$ and the wall through $(-1,0)$. This gives the desired uniform bound.
\end{proof}

\begin{remark}
 In specific examples, one can do much better than these bounds, as we shall see when we make our detailed analysis of the Hilbert scheme.
\end{remark}

\begin{remark}
Proposition \ref{Stable} and Corollary \ref{stable-Mum} allow us to identify  the Hilbert scheme $\PP^{2[n]}$ with the coarse moduli schemes of  $\cM_{s,t}(1,0,-n)$  for $s<0$ and $t$ sufficiently large. 
\end{remark}

\medskip

\section{The quiver region}\label{s-quiver} Fix an integer $k \in \bZ$ and consider the three objects:
$$\cO_{\bP^2}(k-2)[2], \ \cO_{\bP^2}(k-1)[1], \ \cO_{\bP^2}(k) \in \cD^b(\coh(\bP^2))$$
This is an ``Ext-exceptional'' collection, in the sense of \cite[Definition 3.10]{MacriStability}, where it is shown that 
the extension-closure of these three objects:
$$\cA(k) := \langle \cO_{\bP^2}(k-2)[2], \ \cO_{\bP^2}(k-1)[1], \ \cO_{\bP^2}(k)\rangle$$
is the heart of a $t$-structure. Moreover, Macr\'i explains that:

\medskip

\begin{lemma}[Macr\'i] \cite[Lemma 3.16]{MacriStability}\label{quiver} If $\cA$ is the heart of a $t$-structure and 
$$\cO_{\bP^2}(k-2)[2], \ \cO_{\bP^2}(k-1)[1], \ \cO_{\bP^2}(k) \in \cA$$
then $\cA = \cA(k)$.
\end{lemma}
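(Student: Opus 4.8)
The plan is to prove the two inclusions $\cA(k)\subseteq\cA$ and $\cA\subseteq\cA(k)$ separately. The inclusion $\cA(k)\subseteq\cA$ is formal: the heart of any $t$-structure is closed under extensions inside $\cD^b(\coh(\bP^2))$ (both aisles are), and $\cA(k)$ is by construction the extension-closure of the three objects $\cO_{\bP^2}(k-2)[2]$, $\cO_{\bP^2}(k-1)[1]$, $\cO_{\bP^2}(k)$, all of which lie in $\cA$ by hypothesis.

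For the reverse inclusion I would argue with the cohomology functors $H^i$ of the $t$-structure having heart $\cA(k)$, which is moreover bounded because $\cO_{\bP^2}(k-2),\cO_{\bP^2}(k-1),\cO_{\bP^2}(k)$ generate $\cD^b(\coh(\bP^2))$. Fix $E\in\cA$, and let $q\le p$ be the smallest and largest integers with $H^q(E)\ne 0$ and $H^p(E)\ne 0$; by the first inclusion each $H^i(E)$ lies in $\cA(k)\subseteq\cA$. I claim $q=p=0$, which gives $E\in\cA(k)$. Suppose $q<0$. The bottom truncation triangle $H^q(E)[-q]\to E\to C\to H^q(E)[1-q]$, with $C$ having $\cA(k)$-cohomology in degrees $\ge q+1$, has first morphism lying in $\Hom(\cA[-q],\cA)\cong\Hom(\cA,\cA[q])$, which vanishes for $q<0$ in any heart. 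A distinguished triangle whose first morphism is zero splits, so $C\cong E\oplus H^q(E)[1-q]$; applying $H^q$ (the left side has no $\cA(k)$-cohomology in degree $q$) yields $H^q(E)=0$, a contradiction. Symmetrically, if $p>0$ the top truncation triangle $C'\to E\to H^p(E)[-p]\to C'[1]$ has middle morphism $E\to H^p(E)[-p]$ in $\Hom(\cA,\cA[-p])=0$, forcing $C'\cong E\oplus H^p(E)[-p-1]$, again absurd in cohomological degree $p$. Hence $p=q=0$.

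I do not expect a genuine obstacle: once $\cA(k)$ is known to be a \emph{bounded} heart, the statement is formal and uses only extension-closedness of hearts and the vanishing $\Hom(\cA,\cA[n])=0$ for $n<0$. The one thing to watch is bookkeeping --- picking the correct truncation triangle and tracking the cohomological degree in each $\Hom$-vanishing. Equivalently, one may appeal to the general fact that a bounded heart contained in another bounded heart must equal it: from $\cA(k)\subseteq\cA$ and extension-closedness one gets the containment of aisles $\cD^{\le 0}_{\cA(k)}\subseteq\cD^{\le 0}_{\cA}$, and symmetrically of coaisles, and two $t$-structures satisfying these containments coincide.
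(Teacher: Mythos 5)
Your proof is correct. Note that the paper itself gives no argument for this lemma --- it is quoted verbatim from Macr\'i --- so there is nothing internal to compare against; what you have written is essentially the standard proof (and, in its ``aisle containment'' reformulation at the end, essentially the argument Macr\'i himself uses, which goes back to Bridgeland's observation that a bounded heart contained in another heart must coincide with it). Both halves check out: the inclusion $\cA(k)\subseteq\cA$ is immediate from extension-closedness of hearts, and your d\'evissage for the reverse inclusion correctly isolates the extreme $\cA(k)$-cohomological degrees of $E\in\cA$ and kills them using $\Hom(\cA,\cA[n])=0$ for $n<0$ together with the splitting of a triangle whose first (resp.\ second) map vanishes. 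The one input you rightly flag --- that the $t$-structure with heart $\cA(k)$ is \emph{bounded}, so that $p$ and $q$ exist --- is exactly what the paper imports from Macr\'i's construction for an Ext-exceptional collection, and it rests on the fullness of the Beilinson collection $\cO_{\bP^2}(k-2),\cO_{\bP^2}(k-1),\cO_{\bP^2}(k)$; it is worth saying explicitly that no boundedness hypothesis is needed on $\cA$ itself, which your argument indeed never uses.
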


\medskip

The objects of $\cA(k)$ are complexes:
$$\bC^{n_0} \otimes _{\bC} \cO_{\bP^2}(k-2) \rightarrow \bC^{n_1} \otimes _{\bC} \cO_{\bP^2}(k-1)\rightarrow 
\bC^{n_2} \otimes _{\bC} \cO_{\bP^2}(k).$$
In particular, a subobject 
in $\cA(k)$ of an object $E^\bullet$ of dimensions $(n_0,n_1,n_2)$ has dimensions $(m_0,m_1,m_2)$ with 
$m_i \le n_i$ for each $i$. Thus, quite unlike the category of coherent sheaves (or any of the categories $\cA_s$ above) 
there are only finitely many possible invariants for subobjects of an object with given invariants. It also immediately follows that:

\begin{observation}
 For {\it any} choices of $\zeta_0,\zeta_1,\zeta_2 \in \bH$, if we define:
$$Z_{\zeta_0,\zeta_1,\zeta_2}(\cO_{\bP^2}(k-i)[i]) := \zeta_i$$
then the pair $(\cA(k), Z_{\zeta_0,\zeta_1,\zeta_2})$ is a {\it stability} condition. Here $Z= d+ir$ in terms of the rank and degree functions defined before.
\end{observation}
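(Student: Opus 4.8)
The plan is to verify directly the two positivity conditions in the definition of a pre-stability condition and then the Harder--Narasimhan property; the structural facts about $\cA(k)$ recorded just before the statement make both steps essentially formal, the only non-formal ingredient being the finite-length property of $\cA(k)$.

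First I would spell out the consequences of that discussion. The Grothendieck group $K(\cA(k))$ is freely generated by $e_0=[\cO_{\bP^2}(k-2)[2]]$, $e_1=[\cO_{\bP^2}(k-1)[1]]$, $e_2=[\cO_{\bP^2}(k)]$; an object $E^\bullet$ of dimension vector $(n_0,n_1,n_2)$ has $[E^\bullet]=n_0e_0+n_1e_1+n_2e_2$ with all $n_i\ge 0$, vanishing iff $E^\bullet=0$; and any subobject $F\subset E^\bullet$ in $\cA(k)$ has dimension vector $\le(n_0,n_1,n_2)$ componentwise. Since dimension vectors are additive in the sequence $0\to F\to E^\bullet\to E^\bullet/F\to 0$, the map sending $F$ to its dimension vector is strictly order-preserving on the poset of subobjects of $E^\bullet$; hence every chain of subobjects of $E^\bullet$ has length at most $n_0+n_1+n_2$, so $\cA(k)$ is noetherian and artinian and the set of $K$-classes of subobjects of $E^\bullet$ is finite.

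Next I would check pre-stability. As $Z_{\zeta_0,\zeta_1,\zeta_2}$ is prescribed on the generators $e_0,e_1,e_2$, it extends uniquely to a linear map on $K(\cA(k))$, with $Z_{\zeta_0,\zeta_1,\zeta_2}(E^\bullet)=n_0\zeta_0+n_1\zeta_1+n_2\zeta_2$ for $E^\bullet$ of dimension vector $(n_0,n_1,n_2)$. Its imaginary part is $r(E^\bullet)=\sum_i n_i\,\mathrm{Im}\,\zeta_i\ge 0$, since each $\zeta_i\in\bH$; this is condition~$(\ast)$. If moreover $r(E^\bullet)=0$ with $E^\bullet\ne 0$, then every $\zeta_i$ with $n_i>0$ has vanishing imaginary part, hence lies on the negative real axis, so $Z_{\zeta_0,\zeta_1,\zeta_2}(E^\bullet)$ is a strictly negative real number and $d(E^\bullet)>0$; this is condition~$(\ast\ast)$ (and when $\bH$ is taken to be the strict upper half-plane this case does not arise). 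Thus $(\cA(k),Z_{\zeta_0,\zeta_1,\zeta_2})$ is a pre-stability condition.

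Finally, for the Harder--Narasimhan property I would run the standard argument, which goes through thanks to the finiteness noted above. Since $\cA(k)$ is artinian, $E^\bullet$ has finite length; since the $K$-classes of its subobjects form a finite set, their slopes $\mu=d/r$ take only finitely many values, so $E^\bullet$ has a subobject of maximal slope and, by noetherianity together with the seesaw inequality (a sum of two subobjects of the maximal slope again has that slope), a largest such subobject $E^\bullet_{\max}$; the quotient $E^\bullet/E^\bullet_{\max}$ has strictly smaller length, and induction yields the filtration. One could instead simply invoke that a stability function on a noetherian and artinian abelian category automatically has the Harder--Narasimhan property, cf.\ \cite{MacriStability}. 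Therefore $(\cA(k),Z_{\zeta_0,\zeta_1,\zeta_2})$ is a stability condition. I expect the only real content to be the finite-length statement --- equivalently the bound $m_i\le n_i$ on dimension vectors of subobjects --- which is where the special structure of the Ext-exceptional collection is used, and which is already established in the discussion preceding the statement; so no step here poses a genuine obstacle.
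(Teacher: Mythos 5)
Your proposal is correct and follows exactly the route the paper intends: the paper states this as an Observation following ``immediately'' from the fact that a subobject of an object with dimension vector $(n_0,n_1,n_2)$ has dimension vector bounded componentwise, and the subsequent Remark notes that finiteness of Harder--Narasimhan filtrations is therefore trivial. Your write-up simply supplies the details (positivity from $\zeta_i\in\bH$, finite length of $\cA(k)$, and the standard existence argument for HN filtrations) that the paper leaves implicit.
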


\begin{remark}
Here the finiteness of Harder-Narasimhan filtrations is trivial. Also, notice that this space of stability conditions is 
three (complex) dimensional, with fixed $t$-structure, unlike the upper-half plane, which was two real 
dimensional, with varying $t$-structures. Clearly $\cA(k) \ne \cA_s$ for any $k$ or $s$ 
(there is no coherent sheaf shifted by $2$ in any $\cA_s$). Nevertheless, we will find isomorphisms 
of moduli spaces of stable objects. 
\end{remark}

For fixed $k$, the conversion $(n_0, n_1, n_2) \mapsto (r,c,d)$ from dimensions to Chern classes is:
\medskip
$$C := \left[ \begin{array}{ccc} 1 & -1 & 1 \\ \\
k-2 & -(k-1) & k \\ \\
{\frac{(k-2)^2}2} & \frac{-(k-1)^2}2 & \frac{k^2}2 \\ \end{array}\right]$$

\medskip

Vice versa, the conversion from Chern classes to dimensions is:

\medskip

$$C^{-1} = \left[ \begin{array}{ccc}   \frac{k(k-1)}2 & \frac{-(2k-1)}2 & 1 \\ \\
k(k-2) & -(2k-2) &  2 \\ \\ 
\frac{(k-1)(k-2)}2& \frac{-(2k-3)}2 & 1 \\ \end{array}\right]$$

\bigskip

\begin{example} (i) For each integer $k$, the twisted Koszul complex:

$$\bC^{1} \otimes _{\bC} \cO_{\bP^2}(k-2) \rightarrow \bC^{2} \otimes _{\bC} \cO_{\bP^2}(k-1)\rightarrow 
\bC^{1} \otimes _{\bC} \cO_{\bP^2}(k)$$
is exact except at the right, where the cokernel is isomorphic to the skyscraper sheaf $\bC_x$. This matches the dimension 
computation:
$$C^{-1}\left[\begin{array}{c} 0 \\ 0 \\ 1 \end{array}\right] = \left[\begin{array}{c} 1 \\ 2 \\ 1\end{array} \right] $$

\medskip

(ii) Every stable torsion-free sheaf  $E$ of degree slope: 
$$-1 < \mu(E) \le 0$$
is the {\it middle} cohomology 
of a ``monad'' that is exact elsewhere:

\medskip

$$\bC^{n_0} \otimes _{\bC} \cO_{\bP^2}(-1) \rightarrow \bC^{n_1} \otimes _{\bC} \cO_{\bP^2}\rightarrow 
\bC^{n_2} \otimes _{\bC} \cO_{\bP^2}(1)$$

\medskip

\nt or in other words, $E[1] \in \cA(1)$ (see, e.g. \cite{BarthMonads}).
\end{example}

\medskip

In the case we will consider, $\cI_Z$ is the ideal sheaf of a subscheme $Z \subset \bP^2$ 
of length $l(Z) = n$. This is 
stable and torsion-free with Chern character $(1,0,-n)$, so as an object of $\cA(1)$, the associated monad for $\cI_Z[1]$ 
has Chern character $(-1,0,n)$ and dimension invariants:
$$(n_0,n_1,n_2) = (n, 2n+1, n).$$

Other than the monad, we will assume $k = -d$ is {\bf non-positive}.

\medskip

\nt {\bf The Dimension Invariants} for $\cI_Z[1]$ in $\cA(-d)$ are:
$$C^{-1}\left[ \begin{array}{r} -1 \\ 0 \\ n \end{array}\right] = \left(n -  \frac{d(d+1)}2, \ 2n - d(d+2), \ n - \frac{(d+1)(d+2)}2\right)$$

In particular, 
\begin{equation}
n \ge  \frac{(d+1)(d+2)}2
\end{equation}
is a {\it necessary} condition for {\bf any} object with Chern classes $(-1,0,n)$ to belong to $\cA(-d)$. 
On the other hand, 
\begin{equation}
n \le \frac{d(d+1)}2
\end{equation}
is needed for an object with Chern character $(1,0,-n)$ to be in $\cA(-d)$. 

\medskip

Suppose $(\cA,Z)$ is an arbitrary stability condition on $\cD^b(\coh(\bP^2))$. 
For each integer $i \in \bZ$, there is a stability condition $(\cA[i], Z[i])$ with:
$$\cA[i] := \{ A[i] \ | \ A \ \mbox{is an object of $\cA$}\} \ \ \mbox{and}\ \ Z[i](A[i]) := (-1)^iZ(A[i]).$$

More interestingly, one can {\it interpolate} between these integer shifts. For each $0 < \phi < 1$, define:

\medskip

$\bullet$ $\cQ_\phi = \langle Q \in \cA \ | \ Q \ \mbox{is stable with} \  \arg(Z(Q)) > \phi \pi \rangle$.

\medskip

$\bullet$ $\cF_\phi = \langle F \in \cA \ | \ F \ \mbox{is stable with} \  \arg(Z(F)) \le \phi \pi \rangle$.

\medskip

\nt and define $\cA[\phi] = \langle \cQ_\phi, \cF_\phi[1]\rangle$ and $Z[\phi](E) = e^{-i\pi\phi}Z(E).$

\medskip

This extends to an action of $\bR$ on the manifold of stability conditions. 

\medskip

\nt {\it Remark.} This action of $\bR$ is the restriction of an action by $\widetilde{\GL(2,\bR)^+}$ (the universal cover
of the set of matrices of positive determinant) established by Bridgeland in \cite{bridgeland:stable} but we will not need this larger group action.
It is important to notice that moduli spaces of stable objects are {\bf unaffected} by the action. Specifically:

\medskip

(a) Stable objects of $\cA$ with $\arg(Z)/\pi = \psi > \phi$ are identified with
stable objects of $\cA[\phi]$ with  $\arg(Z[\phi])/\pi = \psi - \phi$.

\medskip

(b) Stable objects of $\cA$ with $\arg(Z)/\pi = \psi \le \phi$ are identified with stable objects of 
$\cA[\phi]$ with $\arg(Z[\phi])/\pi =  1 + (\psi - \phi)$ via $A \mapsto A[1]$. 

\medskip

Finally, we have the following:

\medskip

\nt \begin{proposition}\label{QuiverRegions} If $(s,t)$ satisfy:
\begin{equation}
(s - (k-1))^2 + t^2 < 1
\end{equation}
then each moduli space of $(s,t)$-stable objects (with fixed invariants) is isomorphic to a moduli space
of stable objects in $\cA(k)$ for suitable choices of 
$\zeta_0,\zeta_1,\zeta_2$ (depending upon $(s,t)$). 
\end{proposition}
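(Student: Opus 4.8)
The plan is to realise the $(s,t)$-stability condition, after acting by the $\bR$-action recorded before the statement, as a stability condition whose heart is $\cA(k)$, and then to invoke Macr\'i's Lemma \ref{quiver} together with the fact that this action does not change moduli of stable objects (properties (a) and (b) above). Concretely, I would produce a real number $\phi=\phi(s,t)$ such that the heart $\cA_s[\phi]$ contains all three of $\cO_{\bP^2}(k-2)[2]$, $\cO_{\bP^2}(k-1)[1]$, $\cO_{\bP^2}(k)$; since $\cA_s[\phi]$ is the heart of a bounded $t$-structure (being obtained from $\cA_s$ by the $\bR$-action), Lemma \ref{quiver} then forces $\cA_s[\phi]=\cA(k)$. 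Setting $\zeta_i:=Z_{s,t}[\phi]\big(\cO_{\bP^2}(k-i)[i]\big)$ for $i=0,1,2$, each $\zeta_i$ lies in $\bH$ because the three objects now lie in the heart, and, as a central charge on $\cA(k)$ is determined by its values on the three simple objects, $(\cA(k),Z_{s,t}[\phi])$ is exactly the quiver stability condition $(\cA(k),Z_{\zeta_0,\zeta_1,\zeta_2})$. Because the $\bR$-action identifies $(s,t)$-stable objects of $\cA_s$ with $Z_{s,t}[\phi]$-stable objects of $\cA_s[\phi]$ (up to a shift that depends only on the fixed invariants), the corresponding moduli spaces are isomorphic.

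So the content is in producing $\phi$. First I would record which shift of each line bundle lands in $\cA_s$: on the disk $(s-(k-1))^2+t^2<1$ one has $k-2<s<k$, so $\cO_{\bP^2}(k)\in\cQ_s$ already lies in $\cA_s$, $\cO_{\bP^2}(k-2)[1]\in\cA_s$, and $\cO_{\bP^2}(k-1)$ (resp. $\cO_{\bP^2}(k-1)[1]$) lies in $\cA_s$ according as $s<k-1$ (resp. $s\geq k-1$). By Proposition \ref{Stable}(d) the line bundles $\cO_{\bP^2}(m)$ with $m>s$ are $\mu_{s,t}$-stable in $\cA_s$, and one needs the analogous statement for the shifts $\cO_{\bP^2}(m)[1]$ with $m\leq s$ — which follows from the same minimal-rank destabilising-subobject argument used in the proof of Proposition \ref{Stable}, applied to the cohomology sequence of a would-be destabiliser of $\cO_{\bP^2}(m)[1]$. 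Granting this, each of the three objects above is $\mu_{s,t}$-semistable in $\cA_s$ and hence has a well-defined phase (i.e. $\arg Z_{s,t}/\pi$); since $\cA_s[\phi]=\langle\cQ_\phi,\cF_\phi[1]\rangle$ contains a semistable object exactly when (the appropriate branch of) its phase lies in $(\phi,\phi+1]$, it suffices to check that the three phases lie in a common half-open interval of length $1$.

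The phase computation is where the hypothesis enters. A direct computation gives $Z_{s,t}(\cO_{\bP^2}(m))=\tfrac12\big((m-s)+it\big)^2$ up to the sign fixed by the convention $Z=-d+ir$, so, writing $w=s-(k-1)$, the phases of $\cO_{\bP^2}(k)$ and $\cO_{\bP^2}(k-2)[2]$ are $1-\tfrac2\pi\arctan\tfrac{t}{1-w}$ and $1+\tfrac2\pi\arctan\tfrac{t}{1+w}$, while that of $\cO_{\bP^2}(k-1)[1]$ is an analogous expression in $\arctan\tfrac{t}{|w|}$ (two sign cases, with the value $1$ at $w=0$). The comparisons among these three that do not involve the hypothesis hold throughout the disk, and the remaining constraint — that all three fit in an interval of length at most $1$ — is exactly $\arctan\tfrac{t}{1-w}+\arctan\tfrac{t}{1+w}\leq\tfrac\pi2$, which by the arctangent addition formula is equivalent to $\tfrac{t}{1-w}\cdot\tfrac{t}{1+w}\leq1$, i.e. to $t^2+w^2\leq1$. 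Under the strict hypothesis $t^2+w^2<1$ the inequality is strict, the interval has length strictly less than $1$, and a whole interval of admissible $\phi$ exists.

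The main obstacle, and the step I would treat most carefully, is the second paragraph: the bookkeeping of which shift of each line bundle lies in $\cA_s$ as $(s,t)$ ranges over the disk — including the slice $s=k-1$, where $\cO_{\bP^2}(k-1)[1]$ has infinite slope — and, above all, a clean proof that $\cO_{\bP^2}(m)[1]$ with $m\leq s$ is $\mu_{s,t}$-semistable in $\cA_s$; the phase computation itself is elementary once that is in place. One should also be slightly careful that the final statement is an isomorphism of moduli spaces and not merely a bijection on points: this uses that the $\bR$-action is the restriction of Bridgeland's $\widetilde{\GL(2,\bR)^+}$-action and therefore acts as the identity on the underlying derived category, relabelling the stability data rather than changing the moduli problem.
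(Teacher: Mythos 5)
Your proposal is correct and follows essentially the same route as the paper: find $\phi(s,t)$ so that the tilted heart $\cA_s[\phi]$ contains all three objects $\cO_{\bP^2}(k-2)[2]$, $\cO_{\bP^2}(k-1)[1]$, $\cO_{\bP^2}(k)$, invoke Macr\'i's Lemma \ref{quiver} to identify that heart with $\cA(k)$, and use the $\bR$-action's invariance of moduli to transport stability. The only difference is presentational --- where you compute the three phases explicitly and reduce everything to the single inequality $\arctan\frac{t}{1-w}+\arctan\frac{t}{1+w}<\frac{\pi}{2}$ (equivalent to the hypothesis), the paper performs the same comparison qualitatively by splitting the disk into four subregions and recording the ordering of the arguments in each; and where you re-derive the stability of $\cO_{\bP^2}(k-2)[1]$ in $\cA_s$ from the argument of Proposition \ref{Stable}, the paper simply cites \cite{aaron:k3}.
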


\begin{proof} First note that $\cO_{\bP^2}(k)$ and  $\cO_{\bP^2}(k-2)[1]$ are stable objects of  $\cA_s$ for all $k-2 < s < k$
(see \cite{aaron:k3} for the latter).  
Moreover, the semicircle $(s-(k-1))^2 + t^2 = 1$ is the potential wall corresponding to:
$$\arg(Z_{(s,t)}(\cO_{\bP^2}(k-2)[1])) = \arg(Z_{(s,t)}(\cO_{\bP^2}(k)))$$
Below this wall, the former has smaller arg than the latter. 

\medskip

It is useful to divide the region (3) into four subregions.

\medskip

(R1) is the region: $s \ge k - 1,  (s - (k-\frac 12))^2 + t^2 > 1$

\medskip

(R2) is the region: $s > k - 1,  (s - (k-\frac 12))^2 + t^2 \le 1$

\medskip

(R3) is the region: $s < k - 1,  (s - (k-\frac 32))^2 + t^2 > 1$

\medskip

(R4) is the region: $s < k - 1,  (s - (k-\frac 32))^2 + t^2 \le 1$

\medskip

Within these regions, we have the following inequalities on the args
(suppressing the subscript on the $Z$).

\medskip

(R1) $\cO_{\bP^2}(k-2)$ and $\cO_{\bP^2}(k-1)$ both shift by $1$, and:
$$\arg(Z(\cO_{\bP^2}(k-2)[1])) < \arg(Z(\cO_{\bP^2}(k))) <  \arg(Z(\cO_{\bP^2}(k-1)[1] ))$$

(R2) $\cO_{\bP^2}(k-2)$ and $\cO_{\bP^2}(k-1)$ both shift by $1$, and:
$$\arg(Z(\cO_{\bP^2}(k-2)[1])) <  \arg(Z(\cO_{\bP^2}(k-1)[1])) \le \arg(Z(\cO_{\bP^2}(k)))$$

(R3) $\cO_{\bP^2}(k-2)$ shifts by $1$, and:
$$\arg(Z(\cO_{\bP^2}(k-1))) < \arg(Z(\cO_{\bP^2}(k-2)[1])) <  \arg(Z(\cO_{\bP^2}(k)))$$

(R4) $\cO_{\bP^2}(k-2)$ shifts by $1$, and:
$$\arg(Z(\cO_{\bP^2}(k-2)[1])) \le  \arg(Z(\cO_{\bP^2}(k-1))) < \arg(Z(\cO_{\bP^2}(k)))$$

For $(s,t)$ in each region, there is a choice of $\phi(s,t) \in (0,1)$ so that:
$$\cO_{\bP^2}(k-2)[2], \cO_{\bP^2}(k-1)[1], \cO_{\bP^2}(k) \in \cA[\phi(s,t)]$$
By Lemma \ref{quiver}, it follows that $\cA[\phi(s,t)] = \cA(k)$ and $(s,t)$-stability is the same as 
$Z[\phi(s,t)]$-stability for $\zeta_i := Z_{(s,t)}[\phi(s,t)](\cO_{\bP^2}(k-i)[i])$. Thus the two stability conditions
are in the same $\bR$-orbit, and their moduli spaces are isomorphic.
\end{proof}

\medskip

\begin{corollary}\label{AllQuiver}
For {\it every} $(s_0,t_0)$ and every choice of invariants, the moduli space of 
$(s_0,t_0)$-stable objects is isomorphic to a moduli space of stable objects 
of $\cA(k)$ for some choice of $k$ and $\zeta_0,\zeta_1,\zeta_2$.

\end{corollary}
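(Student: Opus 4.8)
The plan is to deduce the corollary directly from Proposition \ref{QuiverRegions} together with the fact, established in Section \ref{PotentialWalls}, that all Bridgeland walls are nested, non-intersecting semicircles. The key observation is that the regions described by the inequality $(s-(k-1))^2 + t^2 < 1$, as $k$ ranges over $\bZ$, cover a strip in the upper-half plane that is "thick enough" to reach every point of interest once one is allowed to move along a path that crosses no actual wall. More precisely, for a fixed choice of invariants $(r,c,d)$, the point $(s_0,t_0)$ lies in some chamber of the wall-and-chamber decomposition of the $(s,t)$-plane, and I want to connect $(s_0,t_0)$ to a point lying in one of the quiver regions $(s-(k-1))^2+t^2 < 1$ by a path that stays inside that chamber.

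First I would record the geometry: the union $\bigcup_{k\in\bZ}\{(s,t) : (s-(k-1))^2 + t^2 < 1\}$ of open unit semicircles centered at the integers covers every point of the upper half plane with $t<1$ except possibly a discrete set on the boundary circles; more usefully, any two consecutive such semicircles overlap, so for each fixed $t \in (0,1)$ the union contains the entire horizontal line at height $t$. Next I would invoke the structure theorem for the Bridgeland walls: for the invariants under consideration, the actual walls in the region $s<0$, $t>0$ (and by the symmetry $\cO(1)\otimes -$, in any vertical strip) are finitely many nested semicircles with centers on the real axis, hence disjoint and, in particular, bounded in $t$. Therefore above some height $T$ there are no walls at all, and any point can be joined to arbitrarily large $t$ within its chamber by simply increasing $t$.

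With these two facts in hand the argument is: given $(s_0,t_0)$, first move vertically upward from $(s_0,t_0)$ to a point $(s_0, T)$ with $T$ larger than the radius of every wall; this path crosses no wall since walls are nested semicircles and moving straight up eventually exits all of them and then stays out. Now $(s_0, T)$ lies above all walls, so the chamber containing the "top" region is a single connected unbounded piece; within it I can freely move horizontally, and in particular slide to a point $(s', T)$ and then descend into one of the quiver semicircles $(s' - (k-1))^2 + t^2 < 1$ (choosing $k$ so that $s'$ is within distance less than $1$ of $k-1$, which is always possible, and descending to small enough $t$). Since the whole of this path lies in one chamber, the moduli space of $(s,t)$-stable objects is constant along it; by Proposition \ref{QuiverRegions} the endpoint's moduli space is a moduli space of stable objects in $\cA(k)$ for suitable $\zeta_0,\zeta_1,\zeta_2$, and hence so is $\cM_{s_0,t_0}$.

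The main obstacle I anticipate is making precise the claim that a path crossing no \emph{actual} wall preserves the moduli space, and that the relevant walls are indeed finite and nested for the invariants at hand — this is exactly where the results of Section \ref{PotentialWalls} (the disjointness and nesting of the semicircular walls, Case 3 and the surrounding discussion) and the finiteness coming from Corollary \ref{stable-Mum}-type bounds are needed. One must also be slightly careful at the boundary circles of the quiver regions, where $\cO_{\bP^2}(k-2)[1]$ and $\cO_{\bP^2}(k)$ have equal phase (this is itself a potential wall); but since I am free to push strictly into the interior of a quiver region, and the path can be perturbed to avoid the finitely many walls transversally, this causes no difficulty. A secondary point worth verifying is that the "chamber above all walls" really is connected across all vertical strips $k-1 < s < k$, which again follows from the nesting of the walls and the fact that no wall is a vertical line in the cases we consider.
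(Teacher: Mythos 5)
Your route is genuinely different from the paper's, and as written it has a gap that the paper's argument is specifically designed to avoid. The paper does not connect $(s_0,t_0)$ to a quiver region by a wall-avoiding path; it simply slides $(s_0,t_0)$ \emph{along the unique potential wall} $W_{(*,(r',c',d'))}$ through that point. Because the potential walls for a fixed Chern character foliate the upper half plane by disjoint semicircles (or a vertical line), moving along one of them never crosses another, so by the analysis of \S \ref{PotentialWalls} and Lemma \ref{every-point-wall} the verdict of $(s,t)$-stability on objects with the given invariants is literally unchanged along the wall; and since every such semicircle or vertical line reaches down to the real axis, it must enter one of the unit semicircular quiver regions of Proposition \ref{QuiverRegions}. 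No knowledge of where the \emph{actual} walls are, how many there are, or how high they reach is needed.

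Your path, by contrast, needs exactly that knowledge, and it is not available at this point in the paper. First, the assertion that ``above some height $T$ there are no walls at all'' is only established beforehand for Mumford-stable torsion-free sheaves (Corollary \ref{stable-Mum}); the corollary is claimed for \emph{every} choice of invariants, and the general finiteness of walls (Corollary \ref{FiniteWalls}) is in fact \emph{deduced from} Corollary \ref{AllQuiver}, so invoking it here is circular. Second, the descent step does not work as described: the quiver regions live at $t<1$, while the actual walls are semicircles of possibly large radius whose feet lie on the real axis, so descending from $(s',T)$ into a quiver region crosses every wall whose real-axis footprint contains $s'$; a wall separating the two endpoints of a path cannot be removed by ``perturbing the path transversally.'' To make your argument work you would have to choose $s'$ outside the footprint of \emph{all} actual walls, which again presupposes that they are bounded. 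Finally, the statement that the moduli space is constant along any path crossing no actual wall is plausible but is not what the paper proves; what Lemma \ref{every-point-wall} gives is constancy along a single potential wall, and that is precisely the statement the published proof uses. If you replace your up-and-over path by the potential wall through $(s_0,t_0)$ itself, all three difficulties disappear.
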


\begin{proof} The moduli spaces of stable objects of fixed invariants $(r',c',d')$ 
remain unchanged as $(s,t)$ moves along the unique potential wall $W_{(*,(r',c',d'))}(s,t)$ through $(s,t)$. Since each such wall 
is either a semicircle or vertical line, it will intersect one (or more) of the quiver regions treated in Proposition \ref{QuiverRegions}, and 
then by that Proposition, the moduli space is isomorphic to a moduli space of the desired type.  
\end{proof}

\begin{corollary}\label{FiniteWalls} Let $E$ be a Mumford-stable torsion-free sheaf on $\bP^2$ with 
primitive invariants. 
There are finitely many isomorphism types of moduli spaces of $(s,t)$-stable 
objects with invariants $\ch(E)$. 

\end{corollary}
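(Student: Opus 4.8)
The plan is to combine the nesting of Bridgeland walls established in \S\ref{PotentialWalls} (Case 3) with the uniform bound on wall diameters from Corollary \ref{stable-Mum} and the quiver picture from Corollary \ref{AllQuiver}. By the remark following Case 3, there is no loss in assuming $c_1(E) = 0$, so by the Bogomolov inequality $\ch_2(E) = d' \le 0$. If $d' = 0$ then $E$ is (formally) $\cO_{\PP^2}$ up to twist and there is essentially nothing to prove, so assume $\ch_2(E) < 0$; then Lemma \ref{every-point-wall} and Corollary \ref{main-cor} apply to any potential destabilizing subobject $A \hookrightarrow E$.

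First I would record that, by Corollary \ref{stable-Mum} and its proof, there is a real number $s_E < 0$, depending only on $\ch(E)$ (explicitly one may take $s_E = -(\ch_0(E)+1)$, or sharper), such that $E$ is $(s,t)$-stable for every $(s,t)$ lying strictly above the semicircular wall through $(s_E,0)$; equivalently, every actual wall $W_{\ch(A),\ch(E)}$ on which $E$ is merely semistable meets the $x$-axis inside the interval $[s_E,0)$. Next I would invoke Case 3 of \S\ref{PotentialWalls}: the potential walls for the fixed character $\ch(E)$ (with $c_1(E)=0$) are a family of nested, pairwise non-intersecting semicircles, one nest in each of the two quadrants, and we only care about the second quadrant $x<0$. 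The nesting means these walls are totally ordered, and each is determined by where it meets the $x$-axis, a point $x = x(\ch(A)) \in \QQ$ by the rationality observation in Case 3. Since all relevant walls meet the axis in $[s_E,0)$ and are a \emph{discrete} set of rationals there — because $x(\ch(A)) = \frac{r\ch_2(E)-\ch_0(E)d}{-\ch_0(E)c}$ has bounded denominator once we bound $\ch_0(A)$, and $\ch_0(A)$ is bounded by the proof of Corollary \ref{stable-Mum} (splitting into the subsheaf case and the nontrivial-kernel case exactly as there) — there are only finitely many actual walls.

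Finally, on each open chamber cut out by these finitely many semicircles (inside the relevant region $s<0$, $t>0$), the set of $(s,t)$-semistable objects with character $\ch(E)$ is constant, and by Corollary \ref{AllQuiver} the corresponding moduli space is isomorphic to a fixed moduli space of representations of the quiver with heart $\cA(k)$ for an appropriate $k$ and suitable $\zeta_\bullet$; in particular each chamber contributes exactly one isomorphism type, and the walls themselves contribute no new types. As there are finitely many chambers, there are finitely many isomorphism types of moduli spaces of $(s,t)$-stable objects with invariants $\ch(E)$, as claimed. The main obstacle is the bookkeeping in the previous step: one must be careful that the a priori unbounded rank of a subobject $A\subset E$ in $\cA_s$ (noted just before Lemma \ref{every-point-wall}) does not produce infinitely many potential walls accumulating in $[s_E,0)$; this is precisely what the two-case analysis in Corollary \ref{stable-Mum} rules out, so the argument is to quote that analysis to bound $\ch_0(A)$ (hence $c_1(A)$ and $x(\ch(A))$), after which discreteness and the nesting finish the proof.
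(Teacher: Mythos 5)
There is a genuine gap, and it sits exactly at the point you flag as ``the main obstacle.'' Your argument reduces finiteness to a bound on $\ch_0(A)$ for a destabilizing subobject $A\subset E$ in $\cA_s$, and you claim this bound comes from ``the two-case analysis in Corollary \ref{stable-Mum}.'' It does not. That analysis bounds the \emph{slopes} $\mu(K)$ and $\mu(A)$ and hence the diameter of any actual wall (equivalently, it produces the interval $[s_E,0)$ you use), but in case (ii) the kernel $K$ and hence $A$ may still have arbitrarily large rank --- the paper warns of precisely this just before Lemma \ref{every-point-wall}. Without a bound on $\ch_0(A)$ you get no bound on $c_1(A)$, hence no bound on the denominator of the center $x(\ch(A))=\frac{\ch_0(A)\ch_2(E)-\ch_0(E)\ch_2(A)}{-\ch_0(E)c_1(A)}$, and so no discreteness: the nested semicircles, all of which contain the point $\left(-\sqrt{2|\ch_2(E)|/\ch_0(E)}\,,0\right)$ in their interior, could a priori accumulate onto that point from outside, even though they all lie over $[s_E,0)$. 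Your appeal to rationality of the centers does not rule this out.

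The paper closes this gap with the quiver picture, which your proof invokes only after the finiteness is already assumed. The actual argument is: every potential wall is a semicircle with endpoints on the real axis, hence meets one of the finitely many quiver regions $(s-(k-1))^2+t^2<1$ with $-\sqrt{2|\ch_2(E)|/\ch_0(E)}<k\le 0$; by Proposition \ref{QuiverRegions} the stability condition there is (up to the $\bR$-action) one on $\cA(k)$, where a subobject of an object with dimension vector $(n_0,n_1,n_2)$ has dimension vector $(m_0,m_1,m_2)$ with $m_i\le n_i$. This is the step that bounds the invariants of destabilizing subobjects --- the substitute for the rank bound you were missing --- and it immediately gives finitely many actual walls per $k$, hence finitely many in total. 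If you want to keep your wall-counting strategy, you must route the boundedness of $\ch(A)$ through Proposition \ref{QuiverRegions} (or prove an independent rank bound, which Corollary \ref{stable-Mum} does not supply).
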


\begin{proof} As usual, we will assume for simplicity that $c_1(E) = 0$. 
The set of potential walls $W_{((r,c,d),\ch(E))}$ where the isomorphism type of the moduli spaces 
{\it might} change, due to a subobject with invariants $(r,c,d)$, 
consists of a nested family of semicircles, all of which contain $(-\sqrt{|\frac{2\ch_2(E)}{r(E)}|},0)$ 
in the interior, by the analysis of \S \ref{PotentialWalls}. Each potential wall intersects a ``quiver region'' for some 
$-\sqrt{|\frac{2\ch_2(E)}{r(E)}|} < k \le 0$ where the moduli spaces of stable objects are identified with moduli spaces of 
stable objects in the categories $\cA(k)$ by Proposition \ref{QuiverRegions}. But in the latter categories, there are only finitely many 
possible invariants of an object with any given dimension invariants $(n_0,n_1,n_2)$. From this it 
follows immediately that for each $k$ only finitely many of the potential walls actually yield semi-stable objects with invariants $\ch(E)$. 
Since there are only finitely many $k$ to consider, the corollary follows.
\end{proof}

\section{Moduli of stable objects are GIT quotients}\label{s-quiver-moduli}
 By Corollary \ref{AllQuiver}, in order to prove the projectivity of 
the moduli spaces of $(s,t)$-stable objects with fixed Chern classes, we only need to establish:

\medskip

\begin{proposition}\label{KingsModuli} The moduli spaces of stable objects of  (non-negative!) dimension invariants 
$\vec n = (n_0,n_1,n_2)$, for each stability condition $Z = (\zeta_0,\zeta_1,\zeta_2)$ on $\cA(k)$, 
may be constructed by Geometric Invariant Theory. In particular, these moduli spaces are quasi-projective, and projective 
when equivalence classes of semi-stable objects are included.

\end{proposition}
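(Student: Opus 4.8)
The plan is to identify $\cA(k)$ with a category of representations of a finite quiver with relations and then invoke King's construction of moduli of representations of a finite-dimensional algebra by Geometric Invariant Theory. For the first step, recall that $\cO_{\bP^2}(k-2),\cO_{\bP^2}(k-1),\cO_{\bP^2}(k)$ is a full strong exceptional collection on $\bP^2$ (Beilinson), so $T=\cO_{\bP^2}(k-2)\oplus\cO_{\bP^2}(k-1)\oplus\cO_{\bP^2}(k)$ is a tilting object and $\mathrm{RHom}(T,-)$ induces an equivalence $\cD^b(\coh(\bP^2))\simeq\cD^b(\mathrm{mod}\,A)$ with $A=\mathrm{End}(T)$. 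Concretely $A\cong\bC Q/J$ is the path algebra of the quiver $Q$ with vertices $0,1,2$, three arrows $0\to 1$ and three arrows $1\to 2$ (dual bases of $H^0(\cO_{\bP^2}(1))$), modulo the commutativity relations coming from the multiplication $H^0(\cO_{\bP^2}(1))\otimes H^0(\cO_{\bP^2}(1))\to H^0(\cO_{\bP^2}(2))$; in particular $A$ is finite-dimensional and $Q$ is acyclic. The three objects $\cO_{\bP^2}(k-2)[2],\cO_{\bP^2}(k-1)[1],\cO_{\bP^2}(k)$ lie in the standard heart $\mathrm{mod}\,A$, so by Lemma~\ref{quiver} the tilting equivalence carries $\cA(k)$ isomorphically onto $\mathrm{mod}\,A$; an object with dimension invariants $(n_0,n_1,n_2)$ corresponds to a representation of dimension vector $\vec n=(n_0,n_1,n_2)$, and subobjects correspond to subrepresentations.

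Next I would rewrite the stability condition as a weight in King's sense. For a representation $V$ of dimension vector $\vec m=(m_0,m_1,m_2)$ the central charge $Z(V)=\sum_i m_i\zeta_i$ depends only on $\vec m$. Put $w:=\sum_i n_i\zeta_i\in\bH$ and define $\theta\in(\bR^3)^*$ by $\theta_i:=-\mathrm{Im}(\bar w\,\zeta_i)$. Then $\theta\cdot\vec n=-\mathrm{Im}(|w|^2)=0$, and for any subobject $W\subset V$ with $\underline{\dim}V=\vec n$ the Bridgeland inequality $\arg Z(W)\le\arg Z(V)$ is equivalent to $\mathrm{Im}(\bar w\,Z(W))\le 0$, i.e. to $\theta\cdot\underline{\dim}W\ge 0$, and likewise for the strict inequality. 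Hence, up to the harmless sign normalization matching King's conventions, $Z$-(semi)stability of objects of $\cA(k)$ with invariants $\vec n$ is exactly $\theta$-(semi)stability of $A$-modules of dimension vector $\vec n$. Since a subobject of an object with invariants $\vec n$ has invariants $\le\vec n$ componentwise, only finitely many dimension vectors of subobjects occur, so $\theta$ lies in the relative interior of a face of a finite rational hyperplane arrangement; replacing $\theta$ by a rational point of that same face and clearing denominators, we may assume $\theta$ is integral and defines the same notion of (semi)stability.

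Then I would apply King's theorem. Let $\mathrm{Rep}(A,\vec n)$ be the affine scheme of $A$-modules of underlying dimension vector $\vec n$, a closed $G$-invariant subscheme of $\bigoplus_{\alpha\colon i\to j}\mathrm{Hom}(\bC^{n_i},\bC^{n_j})$ cut out by the relations $J$, where $G=\prod_i\mathrm{GL}(n_i,\bC)$ acts by change of basis. By King's theorem the $\theta$-semistable points are precisely the GIT-semistable points for the linearization given by the character $\chi_\theta(g)=\prod_i\det(g_i)^{\theta_i}$ (which is trivial on the diagonal $\bC^*\subset G$ because $\theta\cdot\vec n=0$), and the GIT quotient $\mathrm{Rep}(A,\vec n)/\!/_{\chi_\theta}G$ is a coarse moduli space: its stable locus is an open subscheme corepresenting the moduli functor of $\theta$-stable modules, and the whole quotient corepresents the functor of $S$-equivalence classes of $\theta$-semistable modules. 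This quotient is quasi-projective, and it is projective over the affine quotient $\mathrm{Rep}(A,\vec n)/\!/G=\mathrm{Spec}\,\bC[\mathrm{Rep}(A,\vec n)]^G$. Because $Q$ is acyclic, the ring of $G$-invariants on $\mathrm{Rep}(Q,\vec n)$ consists only of constants (there are no oriented cycles on which to form trace functions), hence so does that of its closed invariant subscheme $\mathrm{Rep}(A,\vec n)$; thus the affine base is a point and the GIT quotient is projective. Combining this with the identifications of the first two steps yields the proposition.

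The main obstacle is the input used in the first step: one must know that $\cA(k)$ is equivalent \emph{as an abelian category}, not merely derived-equivalent, to $\mathrm{mod}\,A$, with subobjects corresponding to subrepresentations and with the functions $r,d$ defining $Z$ pulled back to linear functionals of the dimension vector. This is exactly what the Ext-exceptional collection description of $\cA(k)$ together with Lemma~\ref{quiver} supply. Once that is granted, the second step is an elementary rewriting and the third is King's theorem, with the acyclicity of the Beilinson quiver being the only extra ingredient needed to upgrade ``quasi-projective'' to ``projective''.
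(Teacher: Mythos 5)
Your overall route coincides with the paper's: identify $\cA(k)$ with representations of a three--vertex quiver with relations, convert the central charge into a King weight $\theta$ with $\theta\cdot\vec n=0$, and invoke King's GIT construction. Your second and third steps are correct and in fact more explicit than the paper's: the paper normalizes by a real shear making $\mathrm{Re}\,Z(\vec n)=0$ rather than writing $\theta_i=-\mathrm{Im}(\bar w\zeta_i)$, and it leaves to the reader the point that projectivity (as opposed to projectivity over the affine quotient) follows because an acyclic quiver has only constant invariants; your argument supplies that.

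The genuine flaw is in your first step. Under $\mathrm{RHom}(T,-)$ the line bundle $\cO_{\bP^2}(k-i)$ goes to the indecomposable projective $P_i$ concentrated in degree $0$, so $\cO_{\bP^2}(k-i)[i]$ goes to $P_i[i]$, which for $i=1,2$ does \emph{not} lie in the standard heart; the hypothesis you feed into Lemma \ref{quiver} therefore fails, and the pullback of $\mathrm{mod}\,A$ is the (different) heart containing all three line bundles unshifted. Relatedly, the algebra itself is not $\mathrm{End}(T)$: an object of $\cA(k)$ is a complex $\bC^{n_0}\otimes\cO_{\bP^2}(k-2)\to\bC^{n_1}\otimes\cO_{\bP^2}(k-1)\to\bC^{n_2}\otimes\cO_{\bP^2}(k)$, given by matrices $M_x,M_y,M_z$ and $N_x,N_y,N_z$ subject to the \emph{six} relations expressing that the composite sheaf map vanishes in $\Hom(\bC^{n_0},\bC^{n_2})\otimes H^0(\cO_{\bP^2}(2))$ (namely $N_xM_x=0$, $N_xM_y+N_yM_x=0$, etc.), whereas $\mathrm{mod}\,\mathrm{End}(T)$ imposes only the three commutativity relations $N_xM_y=N_yM_x$, etc.; the two algebras are not isomorphic (they have dimensions $12$ and $15$). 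The repair is the paper's direct description, which is what the Ext--exceptional formalism actually gives: every object of $\cA(k)$ is such a three--term complex, derived-category morphisms between them are honest chain maps because $\Hom(\cO_{\bP^2}(b),\cO_{\bP^2}(a))=0$ for $a<b$ kills all homotopies, and subobjects in $\cA(k)$ are subcomplexes with componentwise smaller dimension vectors. With that substitution (a closed $G$-invariant subscheme of the representation space of the $\bP^2$-quiver cut out by $N_\ast M_\ast=0$), the rest of your argument goes through and agrees with the paper's proof.
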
 

\begin{proof} We reduce this to Proposition 3.1 of A. King's paper \cite{king}, on moduli of 
quiver representations. First, we may assume without loss of generality that:
$$\arg(Z(n_0,n_1,n_2)) < \pi$$ 
since otherwise $n_i \ne 0 \Rightarrow \arg(\zeta_i) = \pi$, and there is either a single stable object (one of the 
generators of $\cA(k)$), or
else $n_0 + n_1 + n_2 > 1$ and  
there are no $Z$-stable objects with these invariants. 

\medskip

Let $\zeta_i = (a_i, b_i)$ so that, given dimension invariants $\vec d =  (d_0,d_1,d_2)$, 
$$Z(\vec d) = (\vec d \cdot \vec a, \vec d \cdot \vec b).$$
The criterion for stability will not change if we substitute:
$$\vec a \leftrightarrow \vec a -\vec b\left( \frac{\vec n \cdot \vec a}{\vec n \cdot \vec b}\right)$$
and so we may assume without loss of generality, that:
$$\vec n \cdot \vec a = \mbox{Re}(Z(n_0,n_1,n_2)) = 0$$
and an object $E$ with invariants $\vec n$ is stable if and only if:
$$d_0a_0 + d_1a_1 + d_2a_2 < 0$$
for all dimension invariants $\vec d$ of subobjects $F \subset E$. This is invariant under scaling $\vec a$, 
and since there are only finitely many $\vec d$ to check, we may assume that $\vec a \in \bZ^3$. Thus, our 
objects are complexes:
$$\bC^{n_0} \otimes \cO_{\bP^2}(k-2) \rightarrow \bC^{n_1}\otimes \cO_{\bP^2}(k-1) \rightarrow \bC^{n_2}\otimes \cO_{\bP^2}(k)$$
and our stability condition reduces to a triple $(a_0,a_1,a_2)$ of integers satisfying $n_0a_0 + n_1a_1 + n_2a_2 = 0$, with respect to 
which the complex is stable if and only if every sub-complex with invariants $(d_0,d_1,d_2)$ satisfies $d_0a_0 + d_1a_1 + d_2a_2 < 0$. 

\medskip

These complexes are determined by two triples of matrices:
$$M_x, M_y, M_z: \bC^{n_0} \rightarrow \bC^{n_1} \ \mbox{and}\ N_x, N_y, N_z: \bC^{n_1} \rightarrow \bC^{n_2}$$
satisfying $N_*M_* = 0$, and in particular, they are parametrized by a closed subscheme of the affine space of all 
pairs of triples of matrices (= representations of the $\bP^2$-quiver). In this context, King \cite{king} constructs the
the geometric-invariant-theory quotient by the action of $\GL(n_0)\times \GL(n_1)\times \GL(n_2)$ with the property 
that the quotient parametrizes $(a_0,a_1,a_2)$-stable (or equivalence classes of semi-stable) quiver representations. Our moduli space of 
complexes is, therefore, the induced quotient on the invariant subscheme cut out by setting the compositions of the matrices to zero. 
\end{proof}

\begin{remark}
There is a natural line bundle on the moduli stack of complexes, 
defined as follows \cite{king}. A family of complexes on $\bP^2$ parametrized by a scheme $S$ is a complex:
$$U(k-2) \rightarrow V(k-1) \rightarrow W(k) \ \mbox{on}\ S \times \bP^2$$
where $U,V,W$ are vector bundles of ranks $n_0,n_1,n_2$ pulled back from $S$, twisted, respectively,  by the pullbacks of 
$\cO_{\bP^2}(k-2), \cO_{\bP^2}(k-1), \cO_{\bP^2}(k)$. 

In this setting, the ``determinant'' line bundle on $S$:
$$(\wedge^{n_0}U)^{\otimes a_0} \otimes (\wedge^{n_1}V)^{\otimes a_1} \otimes (\wedge^{n_2}W)^{\otimes a_2}$$
is the pull back of  the ample line bundle on the moduli stack of complexes that restricts to the ample line bundle 
on the moduli space of semi-stable complexes determined by Geometric Invariant Theory.
\end{remark}

\section{Walls for the Hilbert scheme}\label{sec-actual} 

Here, we explicitly describe a number of ``actual'' walls for the moduli of 
$(s,t)$-stable objects with Chern classes $(1,0,-n)$. The results of this section will later be matched with the computation of the stable base locus decomposition in \S \ref{s-examples}. There are two types of walls we will consider, for which ``crossing the wall'' means 
decreasing $t$ (with fixed $s$) across a critical semicircle.

\medskip

\begin{itemize}

\item {\bf Rank One  Walls} for which an ideal sheaf $\cI_Z$ is $(s,t)$-destabilized by a subsheaf, necessarily of rank one and of the form:
$$0 \rightarrow \cI_W(-d) \rightarrow \cI_Z \rightarrow \cI_Z/\cI_W(-d) \rightarrow 0$$
Crossing the wall replaces such ideal sheaves with rank one sheaves $\cE$ that are extensions of the form:
$$0 \rightarrow \cI_Z/\cI_W(-d) \rightarrow \cE \rightarrow \cI_W(-d) \rightarrow 0$$
Notice that $\cE$ has a non-trivial torsion subsheaf, so is obviously not a stable sheaf in the ordinary sense (i.e. for $t >> 0$).

\medskip

\item {\bf Higher Rank Walls} for which an ideal sheaf $\cI_Z$ is $(s,t)$-destabilized by a sheaf $A$ of rank $\ge 2$ which is a 
sub-object of $\cI_Z$ in the category $\cA_s$. If we denote by $F$ the quotient, which is necessarily a two-term complex, then there is an associated exact sequence of 
cohomology sheaves: 
$$0 \rightarrow H^{-1}(F)  \rightarrow A \rightarrow \cI_Z \rightarrow H^0(F) \rightarrow 0$$
and crossing this wall produces complexes $E$ whose cohomology sheaves fit into a long exact sequence of the form:
$$0 \rightarrow H^{-1}(F) \rightarrow H^{-1}(E) \rightarrow 0 \rightarrow H^0(F) \rightarrow  H^0(E) \rightarrow A \rightarrow 0$$

\medskip

\item {\bf The Collapsing Wall},  the innermost semicircle, with the property that for $(s,t)$ on the semicircle, 
there are semi-stable but no stable objects, and for $(s,t)$ in the interior of the semi-circle, there are no semi-stable objects whatsoever.

\end{itemize}

\medskip

\begin{remark}
 In this paper, we will not consider the walls for which no ideal sheaf is destabilized (as the wall is crossed). By Proposition \ref{Stable}, any such wall has to be contained in a higher rank wall. It is not 
clear whether  such walls exist. 
\end{remark}
\medskip

The potential walls for $\PP^{2[n]}$ in the $(s,t)$-plane with $s<0$ and $t>0$ are semi-circles with center $(x,0)$ and radius $\sqrt{x^2 - 2n}$, where $$x = \frac{ch_2(\FF) + r( \FF) n}{ c_1 (\FF)}$$ and $\FF$ is the destabilizing object giving rise to the wall.

\medskip

{\bf Rank One:} The rank one destabilizing sheaves have the form:
$$\cI_W(-k) \subset \cI_Z.$$ Observe that any such subsheaf is actually a subobject in every category $\cA_s$ with $s<-k$.
They give rise to walls $W_x$ with 
$$x = -\frac nk - \frac k2 + \frac{l(W)}{k}.$$

\begin{observation}\label{rank-1}
If the potential wall corresponding to $\OO_{\PP^2}(-k)$ is contained inside the collapsing wall, then all the potential walls arising from $\cI_W(-m)$ with $m \geq k$ and $m^2 \leq 2n$ are contained in the collapsing wall. 
\end{observation}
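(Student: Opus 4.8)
The plan is to reduce everything to one elementary monotonicity estimate, after first pinning down what ``contained inside the collapsing wall'' means in terms of the centers of these semicircles. Recall that the potential wall attached to a destabilizing object $\FF$ is the semicircle with center $(x,0)$ and radius $\sqrt{x^2-2n}$, and that (as noted in \S\ref{PotentialWalls}) two such semicircles with distinct centers are disjoint, hence totally nested. A direct computation of the $x$-axis intercepts $x\pm\sqrt{x^2-2n}$ shows that the semicircle with center $(x_1,0)$ contains the one with center $(x_2,0)$ exactly when $|x_1|\ge |x_2|$ (both $x_i<0$ in the region $s<0$ we consider). Thus ``the wall with center $x$ is contained inside the collapsing wall'' means precisely $|x|\le |x_{\mathrm{coll}}|$, where $(x_{\mathrm{coll}},0)$ is the center of the collapsing wall, the innermost actual wall; and if $x^2<2n$ the ``wall'' is empty (imaginary radius), in which case the containment is vacuous. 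I would record this dictionary first.

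First I would dispose of the case $W=\emptyset$, i.e.\ the subsheaves $\OO_{\PP^2}(-m)\subset \cI_Z$, which give walls with center $x=-\frac nm-\frac m2$, so $|x|=\frac nm+\frac m2$. The function $g(m)=\frac nm+\frac m2$ has a unique minimum on $(0,\infty)$ at $m=\sqrt{2n}$ and is decreasing on $(0,\sqrt{2n}\,]$. Hence for every integer $m$ with $k\le m$ and $m^2\le 2n$ we have $|x(\OO_{\PP^2}(-m))|=g(m)\le g(k)=|x(\OO_{\PP^2}(-k))|\le |x_{\mathrm{coll}}|$, the last inequality being the hypothesis, so the wall from $\OO_{\PP^2}(-m)$ lies inside the collapsing wall. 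Note the hypothesis forces $k\le\sqrt{2n}$ (otherwise the $\OO_{\PP^2}(-k)$-wall has $|x|=\frac nk+\frac k2$ large and cannot be inside the collapsing wall), and if $k>\sqrt{2n}$ there is no $m$ as in the statement; so we may assume $k\le\sqrt{2n}$, which is exactly what makes $g$ decreasing on $[k,\sqrt{2n}\,]$.

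Next I would treat a general rank one destabilizer $\cI_W(-m)\subset \cI_Z$, whose wall has center $x=-\frac nm-\frac m2+\frac{l(W)}{m}$. The point is that $0\le l(W)\le n$: passing to reflexive hulls, the inclusion $\cI_W(-m)\hookrightarrow\cI_Z\hookrightarrow\OO_{\PP^2}$ induces an inclusion $\OO_{\PP^2}(-m)\hookrightarrow\OO_{\PP^2}$, i.e.\ a plane curve $C$ of degree $m$, with respect to which $W$ is contained in the residual scheme of $Z$, of length $n-\deg(Z\cap C)\le n$; equivalently this is visible from the residuation sequence $(\ref{residuation})$. (If one prefers to avoid any geometry here: if $l(W)>n$ then $x>-\frac m2$, so $x^2<\frac{m^2}{4}\le\frac n2<2n$ and the wall is empty, hence trivially contained.) Therefore $x<0$ and
$$|x|=\frac nm+\frac m2-\frac{l(W)}{m}\le \frac nm+\frac m2=|x(\OO_{\PP^2}(-m))|\le |x_{\mathrm{coll}}|$$
by the previous paragraph, so every wall arising from $\cI_W(-m)$ with $m\ge k$ and $m^2\le 2n$ is contained inside the collapsing wall. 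This completes the argument.

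The genuinely delicate point is the uniform bound $l(W)\le n$ on the colength of the destabilizing subsheaf (together with fixing the exact meaning of ``contained inside the collapsing wall'' via the nesting of the semicircles); granting this, the whole statement is the two–line chain of inequalities above, powered only by the monotonicity of $\frac nm+\frac m2$ up to $m=\sqrt{2n}$.
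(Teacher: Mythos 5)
Your overall strategy is the paper's: reduce containment of the nested semicircular walls to an inequality between their centers, and verify that inequality by elementary algebra using only $k\le m$, $m^2\le 2n$ and $l(W)\ge 0$. Splitting the computation into a monotonicity statement for $g(m)=\frac nm+\frac m2$ on $(0,\sqrt{2n}\,]$, followed by the observation that the term $\frac{l(W)}{m}\ge 0$ only moves the center to the right, is a perfectly good repackaging of the paper's single manipulation, and your remark that $k>\sqrt{2n}$ renders the statement vacuous is correct.

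The one step that does not hold up is the bound $l(W)\le n$, which you yourself single out as the delicate point. The residuation/reflexive-hull argument gives the containment the wrong way around: extending $\cI_W(-m)\hookrightarrow\cI_Z\hookrightarrow\OO_{\PP^2}$ to $\OO_{\PP^2}(-m)\to\OO_{\PP^2}$ produces a curve $C$ of degree $m$ cut out by a form $f$, and the condition for $f\cdot\cI_W\subseteq\cI_Z$ is $\cI_W\subseteq(\cI_Z:f)=\cI_{Z'}$, i.e.\ $Z'\subseteq W$. So $W$ \emph{contains} the residual scheme rather than being contained in it, and $l(W)$ can a priori exceed $n$ (take $\cI_{Z'\cup W_0}(-m)\subset\cI_{Z'}(-m)\subseteq\cI_Z$ for any auxiliary finite $W_0$). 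Your numerical fallback is also not airtight: $x>-\frac m2$ does not imply $x^2<\frac{m^2}{4}$ once $x$ can be large and positive. None of this is fatal, since any nonempty potential wall meeting the region $s<0$ automatically has $x<-\sqrt{2n}<0$, which is all your chain of inequalities in $|x|$ actually requires; but the difficulty is entirely an artifact of working with $|x|$. The cleaner route, and the one the paper takes, is to compare signed centers: assuming $-\frac nk-\frac k2>-\frac nm-\frac m2+\frac{l(W)}{m}$ and clearing denominators yields $km(m-k)>2n(m-k)+2k\,l(W)$, which is impossible because $km\le m^2\le 2n$, $m\ge k$ and $l(W)\ge 0$. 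No upper bound on $l(W)$ is needed anywhere.
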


\begin{proof}
Let $W_{x_1}$ be the wall corresponding to $\OO_{\PP^2}(-k)$. Let $W_{x_2}$ be the wall corresponding to $\cI_W(-m)$. Since the potential walls are nested semi-circles, it suffices to show that $x_1 \leq x_2$. We have that $$x_1 = -\frac nk - \frac k2, \ \ x_2 = - \frac nm - \frac m2 + \frac{l(W)}{m}.$$ If for contradiction we assume that $x_1 > x_2$, we obtain the inequality $$ -\frac nk - \frac k2 > - \frac nm - \frac m2 + \frac{l(W)}{m},$$ which implies the inequality $$km (m-k) > 2n (m-k) + 2k\  l(W).$$ This contradicts the inequality $$km \leq m^2 \leq 2n$$ of the hypotheses.
\end{proof}

\begin{remark}
If $\cI_W(-m)$ gives rise to an actual Bridgeland wall for $\PP^{2[n]}$, then by Proposition \ref{Stable} and Corollary \ref{main-cor}, $m^2 \leq 2n$. 
\end{remark}

{\bf Higher Rank Walls:} 
Suppose that $\phi: \FF \rightarrow \cI_Z$ is a sheaf destabilizing $\cI_Z$ at a point $p$ of the wall $W_x$ with center $(x, 0)$. Let $K$ be the kernel $$0 \rightarrow K \rightarrow \FF \rightarrow \cI_Z.$$
By Corollary \ref{main-cor}, both $\FF$ and $K[1]$ have to belong to all the categories $\cA_s$ along the wall $W_x$. From this, we conclude the inequalities $$x-\sqrt{x^2 - 2n} \geq \frac{d(K)}{r(K)},$$ and  $$ x + \sqrt{x^2 -2n} \leq \frac{d(\FF)}{r(\FF)}.$$

Since we have that $$d(\FF) \leq d(K)$$ and $$r(\FF) = r(K) + 1,$$ we can combine these inequalities to obtain the following set of inequalities 

$$x + \sqrt{x^2 -2n} \leq \frac{d(\FF)}{r(\FF)} \leq \frac{d(K)}{r(K)} \frac{r(K)}{r(\FF)} \leq \left(\frac{r(\FF) -1}{r(\FF)}\right) \left(x - \sqrt{x^2 - 2n}\right).$$

Rearranging the inequality, we get the following bound on the center of a Bridgeland wall 
\begin{equation}\label{main}
x^2 \leq \frac{n (2r(\FF)-1)^2}{2 r(\FF)(r(\FF) -1)}.
\end{equation}
Similarly, we get the following inequality for the degree $d(\FF)$: $$r(\FF) \left(x + \sqrt{x^2 -2n}\right) \leq d(\FF) \leq \left(r(\FF) -1\right)  \left(x - \sqrt{x^2 - 2n}\right).$$
The following observation will be useful in limiting the number of calculations we need to perform.

\begin{observation}\label{numerics}
Suppose that $s > r>1$, then $$\frac{(2s-1)^2}{2s(s-1)} < \frac{(2r-1)^2}{2r(r-1)}.$$
To see this inequality, notice that $$\frac{(2r-1)^2}{2r(r-1)} = 2 + \frac{1}{2r(r-1)}.$$ Hence, if $s > r > 1$, then $2s(s-1) > 2r(r-1)$ and the claimed inequality follows.  Thus if Inequality (\ref{main}) forces all rank $r$ walls to lie within the collapsing wall, rank $s$ walls also lie in the collapsing wall.
\end{observation}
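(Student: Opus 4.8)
The plan is to reduce the claimed inequality to the monotonicity of a single elementary function of one real variable. Define $f(x) = \frac{(2x-1)^2}{2x(x-1)}$ for $x > 1$. The assertion is exactly that $f$ is strictly decreasing on $(1,\infty)$, since then $s > r > 1$ immediately gives $f(s) < f(r)$, which is the inequality to be proved.

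First I would carry out the algebraic rewriting suggested by the shape of the expression: expanding the numerator and splitting off the obvious multiple of the denominator,
$$\frac{(2x-1)^2}{2x(x-1)} = \frac{4x^2 - 4x + 1}{2x^2 - 2x} = \frac{2(2x^2 - 2x) + 1}{2x^2 - 2x} = 2 + \frac{1}{2x(x-1)}.$$
This isolates all of the $x$-dependence into the single term $\frac{1}{2x(x-1)}$, reducing the problem to understanding how $2x(x-1)$ behaves for $x > 1$.

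Next I would observe that for $x > 1$ the quantity $2x(x-1)$ is positive and strictly increasing: if $x_2 > x_1 > 1$ then $2x_2(x_2-1) - 2x_1(x_1-1) = 2(x_2 - x_1)(x_2 + x_1 - 1) > 0$ (equivalently, its derivative $4x - 2$ is positive there). Hence $\frac{1}{2x(x-1)}$ is strictly decreasing on $(1,\infty)$, so $f$ is too, which is precisely the statement that $\frac{(2s-1)^2}{2s(s-1)} < \frac{(2r-1)^2}{2r(r-1)}$ whenever $s > r > 1$. For the concluding remark about walls, I would simply feed this monotonicity into Inequality (\ref{main}): if the bound $x^2 \le \frac{n(2r-1)^2}{2r(r-1)}$ already forces every rank-$r$ potential wall inside the collapsing wall, then for $s > r$ the weaker bound $x^2 \le \frac{n(2s-1)^2}{2s(s-1)}$ confines the rank-$s$ potential walls to the same region.

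I do not anticipate any real obstacle here; the entire content is the decomposition $f(x) = 2 + \frac{1}{2x(x-1)}$. The only point requiring a small amount of care is the hypothesis $x > 1$ (and in the application $r > 1$), which is exactly what guarantees $2x(x-1) > 0$, so that enlarging the denominator genuinely decreases the fraction rather than flipping a sign.
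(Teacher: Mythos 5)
Your proof is correct and follows exactly the paper's argument: the decomposition $\frac{(2x-1)^2}{2x(x-1)} = 2 + \frac{1}{2x(x-1)}$ together with the monotonicity of $2x(x-1)$ on $(1,\infty)$ is precisely what the paper uses. You merely spell out the monotonicity check a bit more explicitly; no substantive difference.
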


\medskip

\begin{remark}
In particular, if  $x< - \frac{n}{2} -1$, then the only Bridgeland walls with center at $x$ correspond to rank one walls with $k=1$. Therefore, $x= -n-\frac{1}{2}+ l(W)$ and an ideal sheaf $\cI$ is destabilized by $\cI_W(-1)$ when crossing the wall with center at $x$. On the other hand, let $\frac{n}{2} \leq d \leq n-1$ be an integer. By Proposition \ref{DkBase}, the only walls in the stable base locus decomposition of the effective cone of $\PP^{2[n]}$ contained in the convex cone generated by $H$ and $H - \frac{1}{n}B$ are spanned by the rays $H - \frac{1}{2d}B$. Setting $y = -d$, we see that the transformation $x = y - \frac{3}{2}$ gives a one-to-one correspondence between these walls. Furthermore, a scheme is contained in the stable base locus after crossing the wall spanned by $H + \frac{1}{2y}B$ if and only if the corresponding ideal sheaf $\cI$ is destabilized at the Bridgeland wall with center at $x= y-\frac{3}{2}$.
\end{remark}

\section{Explicit Examples}\label{s-examples}

In this section, we work out the stable base locus decomposition and the Bridgeland walls in the stability manifold of $\PP^{2[n]}$ for $n \leq 9$. These examples are the heart of the paper and demonstrate how to calculate the wall-crossings in given examples. We preserve the notation for divisor classes and curve classes introduced in \S \ref{s-ample}. 

\smallskip

In each example, we will list the walls and let the reader check that there is a one-to-one correspondence between the Bridgeland walls with center at $x<0$ and the walls spanned by $H+ \frac{1}{2y}B$, $y<0$, in the stable base locus decomposition given by $x= y-\frac{3}{2}$.

To compactly describe the stable base locus decomposition of $\PP^{2[n]}$ for $n\leq 9$ we will need to introduce a little more notation.  
\begin{itemize}
\item Let $A_{2,k}(n)$ be the curve class in $\PP^{2[n]}$ given by fixing $k-1$ points on a conic curve, fixing $n-k$ points off the curve, and allowing an $n$th point to move along the conic.  We have $$A_{2,k}(n)\cdot H = 2 \qquad A_{2,k}(n)\cdot B = 2(k-1).$$

\item Let $L_k(n)$ be the locus of schemes of length $n$ with a linear subscheme of length at least $k$.  Observe that $L_k(n)$ is swept out by irreducible curves of class $C_k(n)$.

\item Let $Q_k(n)$ be the locus of schemes of length $n$ which have a subscheme of length $k$ contained in a conic curve.  Clearly $Q_k(n)$ is swept out by $A_{2,k}(n)$.

\item If $D$ is a divisor class, by a \emph{dual curve}  to $D$ we mean an effective curve class $C$ with $C\cdot D = 0$.  By Lemma \ref{chamberCurveLemma}, if $C$ is a dual curve to $H-\alpha B$ with $\alpha>0$, then the locus swept out by irreducible curves of class $C$ lies in the stable base locus of $H - \beta B$ for $\beta > \alpha$.
\end{itemize}

In the stable base locus tables that follow, the stable base locus of the chamber spanned by two adjacent divisors is listed in the row between them.  We note that the effective cone is spanned by the first and last listed divisor classes.  In every case, this statement is justified by Theorem \ref{effective}.  We do not list dual curves to the final edge of the cone when these curves are complicated as they are irrelevant to the discussion of the stable base locus; the dual curve can be found in the proof of Theorem \ref{effective}.  When also give geometric descriptions of effective divisors spanning each ray.

\subsection{The walls for $\PP^{2[2]}$} In this example, we work out the stable base locus decomposition of $\PP^{2[2]}$ and the corresponding Bridgeland walls.

The stable base locus decomposition of $\PP^{2[2]}$ is as follows.

\begin{center}
\begin{longtable}{cccc}
\toprule 
Divisor class & Divisor description & Dual curves & Stable base locus\\\midrule
\endfirsthead
\multicolumn{4}{l}{{\small \it continued from previous page}}\\
\toprule
Divisor class & Divisor description & Dual curves & Stable base locus\\\midrule \endhead
\bottomrule \multicolumn{4}{r}{{\small \it continued on next page}} \\ \endfoot
\bottomrule
\endlastfoot
$B$ & $B$ & $C_1(2)$ &\\ &&&$B$\\
$H$ & $H$ & $C(2)$ &\\ &&&$\emptyset$\\
$H-\frac{1}{2}B$ & $D_1(2)$ &  $C_2(2)$
\end{longtable}
\end{center}

\begin{proof}The nef and effective cones of $\PP^{2[2]}$ follow from Proposition \ref{nef} and Theorem \ref{effective}.  The stable base locus in the cone spanned by $H$ and $B$ is described in Proposition \ref{BH}.  For larger $n$, the stable base locus decomposition will have chambers analogous to the chambers here; we will not mention them when justifying the decomposition.\end{proof}

The Bridgeland walls of $\PP^{2[2]}$ are described as follows.
\begin{itemize}
\item There is a unique semi-circular Bridgeland wall with center $x= -\frac{5}{2}$ and radius $\frac{3}{2}$ in the $(s,t)$-plane with $s<0$ and $t >0$ corresponding to the destabilizing object $\OO_{\PP^2}(-1)$.
\end{itemize} 

\begin{proof}
Let $\cI_Z$ be ideal sheaf corresponding to $Z \in \PP^{2[2]}$. Since $Z$  is contained on a line, $\cI_Z$ admits a non-zero map $\OO_{\PP^2}(-1) \rightarrow \cI_Z$. Hence, already every ideal sheaf $\cI_Z$ is destabilized at the wall with $x = - \frac{5}{2}$ arising from $\OO_{\PP^2}(-1)$. By Observation \ref{rank-1}, this is the only rank 1 wall.

Suppose there were any walls given by a rank 2 destabilizing sheaf $\cF$. Then, by Inequality (\ref{main}), the center $x$ of the corresponding wall satisfies $x^2 <  \frac{9}{2}$. Since $\frac{9}{2} < \frac{25}{4}$, by Observation \ref{numerics}, we conclude that any potential higher rank destabilizing wall has to be contained inside the wall given by $\OO_{\PP^2}(-1)$, which already destabilizes all the ideal sheaves. We conclude that when $n=2$, there is a unique destabilizing wall with center $x=-\frac{5}{2}$.
\end{proof}

$\PP^{2[2]}$ admits three birational models corresponding to the stable base locus decomposition. 
\begin{enumerate}
\item The divisor class $H$ induces the Hilbert-Chow morphism $h:\PP^{2[2]} \rightarrow \PP^{2(2)}$. 
\item A divisor $D$ contained in the open cone bounded by $H$ and $H-\frac{B}{2}$ is ample. A sufficiently high multiple of $D$ gives an embedding of $\PP^{2[2]}$.
\item The divisor class $H-\frac{B}{2}$ induces a Mori fibration $\phi: \PP^{2[2]} \rightarrow (\PP^2)^*$. The support of a  scheme with Hilbert polynomial $2$ is contained in a unique line. The morphism $\phi$ maps a scheme $Z \in \PP^{2[2]}$ to the unique line containing $Z$. The fibers of the morphism $\phi$ are equal to $\PP^2 \cong (\PP^1)^{(2)} \cong \PP^{1[2]}$. 
\end{enumerate}

\subsection{The walls for $\PP^{2[3]}$}
 The stable base locus decomposition of  $\PP^{2[3]}$ is as follows.

\begin{center}
\begin{longtable}{cccc}
\toprule 
Divisor class & Divisor description & Dual curves & Stable base locus\\\midrule
\endfirsthead
\multicolumn{4}{l}{{\small \it continued from previous page}}\\
\toprule
Divisor class & Divisor description & Dual curves & Stable base locus\\\midrule \endhead
\bottomrule \multicolumn{4}{r}{{\small \it continued on next page}} \\ \endfoot
\bottomrule
\endlastfoot
$B$ & $B$ & $C_1(3)$ &\\ &&&$B$\\
$H$ & $H$ & $C(3)$ &\\ &&&$\emptyset$\\
$H-\frac{1}{4}B$ & $D_2(3)$ &  $C_3(3)$\\&&&$L_3(3)=E_1(3)$\\
$H-\frac{1}{2}B$ & $E_1(3)$ &   $C_2(3)$
\end{longtable}
\end{center}
\begin{proof}Except for the final chamber spanned by $H-\frac{1}{4}B$ and $H-\frac{1}{2}B$, everything here is analogous to the $n=2$ case.  Since $C_3(3)$ is dual to $H-\frac{1}{4}B$, the stable base locus in this final chamber contains $L_3(3)$.  But since $L_3(3)=E_1(3)$, which is the divisor spanning $H-\frac{1}{2}B$, we conclude the stable base locus is actually $L_3(3)$.\end{proof}

The Bridgeland walls in the $(s,t)$-plane with $s<0$ and $t>0$ are the following two semi-circles $W_x$ with center $(x,0)$ and radius $\sqrt{x^2 -6}$.

\begin{itemize}
\item The rank one wall $W_{-\frac{7}{2}}$ corresponding to the destabilizing object $\OO_{\PP^2}(-1)$.
\item The two overlapping rank one walls $W_{-\frac{5}{2}}$ corresponding to the destabilizing object  $\OO_{\PP^2}(-2)$ or $\cI_{p}(-1)$. This wall also arises as rank two and rank three walls corresponding to destabilizing objects $\OO_{\PP^2}(-2)^{\oplus 2}$ and $\OO_{\PP^2}(-2)^{\oplus 3}$.
\end{itemize}
\begin{proof}
Let $\cI_Z$ be the ideal sheaf of a zero-dimensional scheme of length three. If the scheme is collinear, then there exists a non-zero map $\OO_{\PP^2}(-1) \rightarrow \cI_Z$. These sheaves are destabilized at the wall with center $x= - \frac{7}{2}$. The next rank 1 wall with $k=1$, occurs when $l(W)=1$ and has center $x=-\frac{5}{2}$. A zero-dimensional scheme of length three is always contained on a conic, hence there exists a non-zero map $\OO_{\PP^2}(-2) \rightarrow \cI_Z$. Hence, all $\cI_Z$ are destabilized at the wall with center $x=-\frac{5}{2}$. By Observation \ref{rank-1}, there are no other rank one walls with center  $x<-\frac{5}{2}$.

By Inequality (\ref{main}), the centers of the rank 2 and 3  walls have to satisfy  $$x^2 \leq \frac{27}{4}, x^2 \leq \frac{25}{4}.$$ By Observation \ref{numerics}, we conclude that any potential destabilizing wall arising from a sheaf of rank $r \geq 4$ would be strictly contained inside the wall $W_{-\frac{5}{2}}$. Since at $W_{-\frac{5}{2}}$ all the ideal sheaves are destabilized, the only other potential walls arise from rank $2$ or $3$ sheaves. Notice that in the rank 3 case, any potential wall that is not contained in the interior of the semicircle defined by $W_{-\frac{5}{2}}$ coincides with $W_{-\frac{5}{2}}$.

We now discuss the rank 2 walls. If there is a rank 2 sheaf $\FF$ giving rise to a wall, we obtain the inequality $$2(x + \sqrt{x^2-6}) \leq d(\FF) \leq x- \sqrt{x^2 -6}.$$ We may assume that $\frac{25}{4} < x^2 \leq \frac{27}{4}$. We conclude that $-4 < d(\FF) < -3$. Since $d(\FF)$ is an integer, we conclude that for any potential wall produced by a rank 2 sheaf either coincides with $W_{-\frac{5}{2}}$ or is strictly contained in the semicircle defined by it. 
Since the generic ideal sheaf $\cI_Z$ is generated by three quadratic equations, there exists a non-zero morphism $\OO_{\PP^2}^{\oplus 3}(-2) \rightarrow \cI_Z$.  Hence the wall $W_{-\frac{5}{2}}$ does also arise as a rank two and rank three wall.
\end{proof}

The Hilbert scheme $\PP^{2[3]}$ admits the following three birational models:

\begin{enumerate}
\item The divisor class $H$ induces the Hilbert-Chow morphism $h: \PP^{2[3]} \rightarrow \PP^{2(3)}$.
\item A divisor in the interior of the cone spanned by $H$ and $H-\frac{B}{4}$ is ample and a sufficiently high multiple of $D$ gives an embedding of $\PP^{2[3]}$.
\item The divisor class $H-\frac{B}{4}$ induces a divisorial contraction $\phi: \PP^{2[3]} \rightarrow G(3,6)$ onto its image. The morphism $\phi$ restricted to the divisor $E_1(3)$ maps $E_1(3)$ to $(\PP^2)^*$ with fibers isomorphic to $\PP^3 = \PP^{1(3)} = \PP^{1[3]}$. The resulting model may be interpreted as the moduli space of Bridgeland stable objects $\cM_{s,t}(1,0,-3)$, for a point $(s,t)$ between the walls $W_{-\frac{7}{2}}$ and  $W_{-\frac{5}{2}}$ 
\end{enumerate}

\subsection{The walls for $\PP^{2[4]}$}
The stable base locus decomposition of  $\PP^{2[4]}$ is as follows.

\begin{center}
\begin{longtable}{cccc}
\toprule 
Divisor class & Divisor description & Dual curves & Stable base locus\\\midrule
\endfirsthead
\multicolumn{4}{l}{{\small \it continued from previous page}}\\
\toprule
Divisor class & Divisor description & Dual curves & Stable base locus\\\midrule \endhead
\bottomrule \multicolumn{4}{r}{{\small \it continued on next page}}\\ \endfoot
\bottomrule
\endlastfoot
$B$ & $B$ & $C_1(4)$ &\\ &&&$B$\\
$H$ & $H$ & $C(4)$ &\\ &&&$\emptyset$\\
$H-\frac{1}{6}B$ & $D_3(4)$ &  $C_4(4)$\\&&&$L_4(4)$\\
$H-\frac{1}{4}B$ & $D_2(4)$ &   $C_3(4)$\\&&&$L_3(4)=E_1(4)$\\
$H-\frac{1}{3}B$ & $E_{1}(4)$ & $A_{2,4}(4)$
\end{longtable}
\end{center}

\begin{proof}
The stable base locus for the chamber spanned by $H-\frac{1}{6} B$ and $H-\frac{1}{4}B$ follows from Proposition \ref{DkBase} (a); recall that we showed $C_4(4)$ was dual to $H-\frac{1}{6}B$ and that $L_4(4)$ lied in the base locus of $D_2(4)$.  In the future many chambers will follow from this proposition, and we will not comment about them.  On the other hand the stable base locus for the final chamber follows as in the final chamber for $n=3$.   
\end{proof}

The Bridgeland walls in the $(s,t)$-plane with $s<0$ and $t>0$ are the following three semi-circles $W_x$ with center $(x,0)$ and radius $\sqrt{x^2-8}$:

\begin{itemize}
\item The rank one wall $W_{-\frac{9}{2}}$ corresponding to the destabilizing object $\OO_{\PP^2}(-1)$.
\item The rank one wall $W_{-\frac{7}{2}}$ corresponding to the destabilizing object $\cI_p(-1)$.
\item The rank one wall $W_{-3}$ corresponding to the destabilizing object $\OO_{\PP^2}(-2)$. This wall also arises as a rank two wall corresponding to the destabilizing object $\OO_{\PP^2}(-2)^{\oplus 2}$. 
\end{itemize}

\begin{proof}
Let $\cI_Z$ be the ideal sheaf of a zero-dimensional scheme of length four. The rank one walls with $k=1$ have centers $x = -\frac{9}{2}, -\frac{7}{2}$, corresponding to sheaves of length four that are collinear and sheaves of length four that have a collinear length three subscheme. Since every scheme of length four is contained in a conic, there is a non-zero map $\OO_{\PP^2}(-2) \rightarrow \cI_Z$. Hence, all the ideal sheaves are destabilized at the wall with center $x = -3$.  By Observation \ref{rank-1}, there are no other rank one walls with center $x < -3$.

By Inequality (\ref{main}), the center of a potential rank 2 wall has to satisfy the inequality $x^2 \leq 9$. By Observation \ref{numerics}, there cannot be any potential walls of rank $r \geq 3$ that are not contained in the semicircle defined by $W_{-3}$. Furthermore, any potential rank 2 wall, either has $x=-3$ and coincides with $W_{-3}$ or is strictly contained in the semicircle defined by $W_{-3}$. Since the resolution of a general zero-dimensional scheme of length four has the form $$0 \rightarrow \OO_{\PP^2}(-4) \rightarrow \OO_{\PP^2}(-2) \oplus \OO_{\PP^2}(-2) \rightarrow \cI_Z \rightarrow 0,$$ $W_{-3}$ also occurs as a rank 2 wall corresponding to  $\OO_{\PP^2}(-2) \oplus \OO_{\PP^2}(-2)$. 
\end{proof}

The Hilbert scheme $\PP^{2[4]}$ admits five birational models. 

\begin{enumerate}
\item The divisor class $H$ gives rise to the Hilbert-Chow morphism $h: \PP^{2[4]} \rightarrow \PP^{2(4)}$.
\item A divisor contained in the open cone bounded by $H$ and $H- \frac{B}{6}$ is ample and a sufficiently high multiple gives an embedding of $\PP^{2[4]}$.
\item The divisor $H-\frac{B}{6}$ induces a small contraction $\phi: X_4 \rightarrow G(6,10)$ onto its image. The morphism $\phi$ is an isomorphism outside the locus of schemes supported on a line. The morphism $\phi$ maps the locus of schemes in $\PP^{2[4]}$ supported on a line to $(\PP^2)^*$ with fibers isomorphic to $\PP^4 = \PP^{1(4)} = \Hilb_4(\PP^1)$. 
\item $X_4$ admits a flip $X_4^{fl}$ and the model for a divisor contained in the open cone bounded by $H-\frac{B}{6}$ and $H-\frac{B}{4}$ is $X_4^{fl}$. This flip $X_4^{fl}$ can be interpreted as the coarse moduli scheme of Bridgeland stable objects $\cM_{s,t}(1,0,-4)$ for $(s,t)$ between the walls $W_{-\frac{9}{2}}$ and $W_{-\frac{7}{2}}$. 
\item The divisor $H-\frac{B}{4}$ induces a divisorial contraction $\phi: X_4^{fl} \rightarrow G(2,6)$. The morphism $\phi$ is an isomorphism in the complement of $E_1(4)$. 
\end{enumerate}

\subsection{The walls for $\PP^{2[5]}$} The stable base locus decomposition of  $\PP^{2[5]}$ is as follows.

\begin{center}
\begin{longtable}{cccc}
\toprule 
Divisor class & Divisor description & Dual curves & Stable base locus\\\midrule
\endfirsthead
\multicolumn{4}{l}{{\small \it continued from previous page}}\\
\toprule
Divisor class & Divisor description & Dual curves & Stable base locus\\\midrule \endhead
\bottomrule \multicolumn{4}{r}{{\small \it continued on next page}} \\ \endfoot
\bottomrule
\endlastfoot
$B$ & $B$ & $C_1(5)$ &\\ &&&$B$\\
$H$ & $H$ & $C(5)$ &\\ &&&$\emptyset$\\
$H-\frac{1}{8}B$ & $D_4(5)$ &  $C_5(5)$\\&&&$L_5(5)$\\
$H-\frac{1}{6}B$ & $D_3(5)$ &   $C_4(5)$\\&&&$L_4(5)$\\
$H-\frac{1}{4}B$ & $D_2(5)$ &  $C_3(5)$
\end{longtable}
\end{center}

\begin{proof}
The last two chambers here both follow from Proposition \ref{DkBase} (a).
\end{proof}

The Bridgeland walls in the $(s,t)$-plane are the following three semi-circles $W_x$ with center at $(x,0)$ and radius $\sqrt{x^2-10}$. 

\begin{itemize}
\item The rank one wall $W_{-\frac{11}{2}}$ corresponding to the destabilizing object $\OO_{\PP^2}(-1)$.
\item The rank one wall $W_{-\frac{9}{2}}$ corresponding to the destabilizing object $\cI_p(-1)$.
\item The two coinciding rank one walls $W_{-\frac{7}{2}}$ corresponding to destabilizing objects $\OO_{\PP^2}(-2)$ and $\cI_{Z'}(-1)$, where $Z'$ is a zero-dimensional scheme of length two.
\end{itemize}

\begin{proof}
Let $\cI_Z$ be the ideal sheaf of a zero-dimensional  scheme of length five. The rank one walls with $k=1$ have centers $x= -\frac{11}{2}, -\frac{9}{2}, -\frac{7}{2}$ corresponding to collinear schemes of length five and schemes of length five that have collinear subschemes of lengths four and three, respectively.  Since every scheme of length five is contained in a conic, there exists a non-zero map $\OO_{\PP^2}(-2) \rightarrow \cI_Z$. Hence, every ideal sheaf $\cI_Z$ is destabilized at the rank one wall with $k=2$ and center $x= -\frac{7}{2}$. By Observation \ref{rank-1}, there are no other potential rank one walls with $x< -\frac{7}{2}$.

By Inequality (\ref{main}), the center of a potential rank 2 wall has to satisfy the inequality $x^2 \leq \frac{45}{4}$. Since $\frac{45}{4} < \frac{49}{4}$, by Observation \ref{numerics}, we conclude that any potential wall of rank bigger than one has to be contained in the semicircle defined by $W_{-\frac{7}{2}}$.
\end{proof}

\subsection{The walls for $\PP^{2[6]}$}
The stable base locus decomposition of  $\PP^{2[6]}$ is as follows.
\begin{center}
\begin{longtable}{cccc}
\\ \toprule 
Divisor class & Divisor description & Dual curves & Stable base locus\\\midrule
\endfirsthead
\multicolumn{4}{l}{{\small \it continued from previous page}}\\
\toprule
Divisor class & Divisor description & Dual curves & Stable base locus\\\midrule \endhead
\bottomrule \multicolumn{4}{r}{{\small \it continued on next page}} \\ \endfoot
\bottomrule
\endlastfoot
$B$ & $B$ & $C_1(6)$ &\\ &&&$B$\\
$H$ & $H$ & $C(6)$ &\\ &&&$\emptyset$\\
$H-\frac{1}{10}B$ & $D_5(6)$ &  $C_6(6)$\\&&&$L_6(6)$\\
$H-\frac{1}{8}B$ & $D_4(6)$ &   $C_5(6)$\\&&&$L_5(6)$\\
$H-\frac{1}{6}B$ & $D_3(6)$ &  $C_4(6)$\\&&&  $L_4(6)$\\
$H-\frac{1}{5}B$ & $D_{T_{\PP^2}(1)}(6)$&   $A_{2,6}(6)$\\&&&$Q_6(6) = E_2(6)$\\
$H-\frac{1}{4}B$ & $E_2(6)$&  $C_3(6)$
\end{longtable}
\end{center}
\begin{proof}
Since $A_{2,6}(6)$ is dual to $H-\frac{1}{5}B$, the locus $Q_6(6)= E_2(6)$ lies in the stable base locus of the chamber $(H-\frac{1}{5}B,H-\frac{1}{4}B]$.  Since $H-\frac{1}{4}B$ is spanned by $E_2(6)$, we conclude that this divisor actually is the stable base locus in this chamber.

Observe that $L_4(6)$ is contained in the stable base locus of the chamber spanned by $H-\frac{1}{6}B$ and $H-\frac{1}{5}B$ since $C_4(6)$ is dual to $H-\frac{1}{6}B$.  By Lemma \ref{stableContainLemma}, the stable base locus of this chamber is contained in $Q_6(6)$, the stable base locus of the final chamber.  Note that $D_{T_{\PP^2}(1)}(6)$ spans the edge $H-\frac{1}{5}B$, and that by Proposition \ref{interpClass} its stable base locus lies in the locus of schemes of length $6$ which fail to impose independent conditions on sections of $T_{\PP^2}(1)$.  It therefore suffices to show that any $Z\in Q_6(6)\setminus L_4(6)$ imposes independent conditions on sections of $T_{\PP^2}(1)$---this will also prove that $T_{\PP^2}(1)$ actually satisfies interpolation for $6$ points.  This follows immediately from Lemma \ref{tangent}, to follow.
\end{proof}

\begin{lemma}\label{tangent}
Let $C$ be a conic curve, possibly singular or nonreduced.  If $Z\subset C$ is a zero-dimensional subscheme of length $2k$ which contains no linear subscheme of length $k+1$, then $Z$ imposes independent conditions on sections of $T_{\PP^2}(k-2)$; furthermore, every section of $T_{\PP^2}(k-2) \otimes \cI _Z$ vanishes along $C$.
\end{lemma}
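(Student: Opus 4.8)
The plan is to reduce the statement to a cohomology vanishing for a rank-two sheaf on the conic $C$ itself. Assume $k\geq 2$ (for $k\leq 1$ the statement is trivial or vacuous). Since $Z\subset C$ we have $\cI_{C}\subset \cI_{Z}$, and because $\cI_{C}=\OO_{\PP^2}(-2)$ and $T_{\PP^2}(k-2)$ are locally free, tensoring $0\to \cI_{C}\to \cI_{Z}\to \cI_{Z}/\cI_{C}\to 0$ by $T_{\PP^2}(k-2)$ stays exact and gives
\[ 0\to T_{\PP^2}(k-4)\to T_{\PP^2}(k-2)\otimes \cI_{Z}\to \mathcal{F}\otimes \cI_{Z\subset C}\to 0, \]
where $\mathcal{F}:=T_{\PP^2}(k-2)|_{C}$ and $\cI_{Z\subset C}=\cI_{Z}/\cI_{C}$ is the ideal sheaf of $Z$ in $\OO_{C}$. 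From the Euler sequence one computes $h^0(T_{\PP^2}(k-2))=k(k+2)$, $h^0(T_{\PP^2}(k-4))=k(k-2)$, and $H^1(T_{\PP^2}(k-4))=0$. Thus, \emph{granting $H^0(C,\mathcal{F}\otimes\cI_{Z\subset C})=0$}, the long exact sequence gives $H^0(T_{\PP^2}(k-2)\otimes \cI_{Z})=H^0(T_{\PP^2}(k-4))=H^0(T_{\PP^2}(k-2)\otimes\cI_{C})$ — which is exactly the assertion that every section of $T_{\PP^2}(k-2)\otimes\cI_Z$ vanishes along $C$ — and since $T_{\PP^2}(k-2)$ has rank $2$ and $k(k+2)-k(k-2)=4k=2\,l(Z)$, the evaluation map $H^0(T_{\PP^2}(k-2))\to H^0(T_{\PP^2}(k-2)|_Z)$ then has maximal rank, i.e.\ $Z$ imposes independent conditions. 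So the whole lemma reduces to showing $H^0(C,\mathcal{F}\otimes\cI_{Z\subset C})=0$; note $\mathcal{F}$ has rank $2$ and degree $4k-2$ on the arithmetic-genus-$0$ curve $C$ and $\chi(\mathcal{F}\otimes\cI_{Z\subset C})=0$, so this is also equivalent to $H^1(C,\mathcal{F}\otimes\cI_{Z\subset C})=0$.

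When $C$ is a smooth conic, $C\cong \PP^1$ is the Veronese and I claim $T_{\PP^2}|_{C}\cong\OO_{\PP^1}(3)^{\oplus 2}$. Indeed a line subbundle of $T_{\PP^2}|_C$ of degree $\geq 4=\deg\OO_C(2)$ would produce a nonzero section of $T_{\PP^2}(-2)|_{C}$, but $H^0(T_{\PP^2}(-2))=0=H^1(T_{\PP^2}(-4))$ by the Euler sequence, so $H^0(T_{\PP^2}(-2)|_C)=0$, a contradiction; since $\deg T_{\PP^2}|_C=6$ this forces the balanced splitting. Hence $\mathcal{F}=\OO_{\PP^1}(2k-1)^{\oplus 2}$, $Z$ is an effective divisor of degree $2k$ on $\PP^1$, and $\mathcal{F}\otimes\cI_{Z\subset C}=\OO_{\PP^1}(-1)^{\oplus 2}$ has no sections. (Here the hypothesis is automatic: a line meets a smooth conic in length $\leq 2\leq k+1$.)

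If $C=\ell_1\cup\ell_2$ is a union of two distinct lines meeting at $p$, then $T_{\PP^2}|_{\ell_i}\cong\OO_{\ell_i}(2)\oplus\OO_{\ell_i}(1)$ with $\OO(2)=T_{\ell_i}$ and $\OO(1)=N_{\ell_i}$, so $\mathcal{F}|_{\ell_i}$ is an extension of $N_{\ell_i}(k-2)=\OO_{\ell_i}(k-1)$ by $T_{\ell_i}(k-2)=\OO_{\ell_i}(k)$. The hypothesis forces $l(Z\cap\ell_i)\leq k$ for each $i$, and when $p\notin Z$ this becomes $l(Z\cap\ell_1)=l(Z\cap\ell_2)=k$ since the lengths add to $2k$. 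A section $\sigma$ of $\mathcal{F}\otimes\cI_{Z\subset C}$ restricts on $\ell_i$ to a section of $\mathcal{F}|_{\ell_i}$ vanishing on $Z\cap\ell_i$; its image in the degree-$(k-1)$ quotient $N_{\ell_i}(k-2)$ vanishes at $k>k-1$ points and so is zero, whence $\sigma|_{\ell_i}$ is a scalar multiple of the unique section of $T_{\ell_i}(k-2)$ with divisor $Z\cap\ell_i$. Evaluating at $p$: $\sigma|_{\ell_i}(p)$ lies in the image of $T_{\ell_i}(k-2)$ in the fiber $\mathcal{F}_p=T_{\PP^2}(k-2)|_p$, and these two images are distinct lines. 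Gluing at the node forces $\sigma|_{\ell_1}(p)=\sigma|_{\ell_2}(p)$, hence both are $0$; as $p\notin Z\cap\ell_i$ the generating section is nonzero at $p$, so the scalars vanish and $\sigma=0$. The double line $C=2\ell$ is treated analogously using the filtration $0\to \mathcal{F}\otimes(\cI_\ell/\cI_\ell^2)\to\mathcal{F}\to\mathcal{F}|_\ell\to 0$: the hypothesis $l(Z\cap\ell)\leq k$ forces the part of $Z$ normal to $\ell$ to have length $\geq k$, which pushes both the image of $\sigma$ in $H^0(\mathcal{F}|_\ell)$ and its component in the sub to be sections of line bundles of sufficiently negative degree, hence zero.

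The hard part is the reducible (and, above all, the non-reduced) conics, and within those the configurations where $Z$ is supported at, or non-reduced along, the singular point of $C$: there $\cI_{Z\subset C}$ fails to be locally free and $Z$ is not a Cartier divisor on $C$, so the clean ``restrict to the components and count degrees'' argument must be replaced by a local analysis at $\operatorname{Sing}(C)$, and it is exactly there that the bound $l(Z\cap\ell)\leq k$ on the collinear subschemes — which controls how much of $Z$ each branch can absorb — is used in an essential way. Everything else uses only the Euler sequence and the splitting types of $T_{\PP^2}$ restricted to lines and conics.
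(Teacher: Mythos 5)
Your first paragraph correctly reduces the lemma to the single vanishing $H^0(C,\cF\otimes\cI_{Z\subset C})=0$, where $\cF=T_{\PP^2}(k-2)|_C$, and this is exactly the framework of the paper's proof (the dimension counts $h^0(T_{\PP^2}(k-2))=k(k+2)$, $h^0(T_{\PP^2}(k-4))=k(k-2)$, $H^1(T_{\PP^2}(k-4))=0$ are all right for $k\geq 2$). But the proposal then stalls precisely where the content of the lemma lies: you prove the vanishing only for a smooth conic and for a nodal conic with $Z$ disjoint from the node, sketch the double line, and explicitly defer ``the hard part'' --- the configurations where $Z$ meets $\operatorname{Sing}(C)$ or is non-reduced along it, where $\cI_{Z\subset C}$ is a priori not locally free. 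Since the lemma is applied in the paper to arbitrary subschemes of arbitrary conics (including double lines, which is why the nonreduced case is stated at all), leaving that case to an unspecified ``local analysis at $\operatorname{Sing}(C)$'' is a genuine gap, not a routine omission.

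The missing idea, which is the paper's one key step, is that the hypothesis lets you avoid any local analysis by showing $\cI_{Z\subset C}$ \emph{is} invertible: since $h^0(\OO_C(k))=2k+1>2k=l(Z)$, there is a curve $D$ of degree $k$ containing $Z$ whose restriction to $C$ is a nonzero section of $\OO_C(k)$; the no-collinear-$(k+1)$ hypothesis is used to arrange that $D$ contains no component of $C$ (in the reduced singular case one takes $D$ a union of $k$ lines meeting $C$ properly, and for $C=2\ell$ one notes that the residual of $Z\cap\ell$ in $Z$ is a length-$k$ subscheme of $\ell$, so a degree-$k$ curve through $Z$ not containing $2\ell$ cannot contain $\ell$ either). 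Then $Z=C\cap D$ is a complete intersection, so $\cI_{Z\subset C}\cong\OO_C(-k)$ and $\cF\otimes\cI_{Z\subset C}\cong T_{\PP^2}(-2)|_C$, whose $H^0$ vanishes uniformly (for every conic $C$) from the two Euler-sequence facts $h^0(T_{\PP^2}(-2))=h^1(T_{\PP^2}(-4))=0$ applied to $0\to T_{\PP^2}(-4)\to T_{\PP^2}(-2)\to T_{\PP^2}(-2)|_C\to 0$. This replaces your entire case analysis, including the cases you completed. If you want to salvage your branch-by-branch approach instead, you must actually carry out the computation of $H^0$ at a non-locally-free $\cI_{Z\subset C}$ (e.g.\ via the residuation filtration on the double line with $Z$ embedded, and at the node with $Z$ passing through it), and as written that work is not done.
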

\begin{proof}
Since $h^0(\OO_C(k)) = 2k+1$ we have $h^0(\OO_C(k)\otimes \cI_Z)\geq 1$, so there is a curve of degree $k$ which contains $Z$ but not $C$.  Even when $C$ is singular or nonreduced, it is possible to choose this curve to have zero-dimensional intersection with $C$  since $Z$ does not have a linear subscheme of length $k+1$.  In the singular case, $Z$ does not meet the singularity, so the degree $k$ curve can be taken to be a union of $k$ lines meeting $C$ properly.  On the other hand, if $C=2L$ is nonreduced, then the scheme residual to $Z\cap L$ in $Z$ is a subscheme of $L$ of length $k$.  Every curve of degree $k-1$ which contains such a scheme contains $L$, so any curve of degree $k$ which contains $Z$ but not $2L$ must not contain $L$.

We conclude that $Z$ is the complete intersection of $C$ and a curve of degree $k$, and thus that $\cI_Z|_C \cong \OO_C(-k)$.  Now consider the exact sequences $$0\to T_{\PP^2}(k-4)\to T_{\PP^2}(k-2)\otimes \cI_Z \to (T_{\PP^2}(k-2)\otimes \cI_Z)|_C = T_{\PP^2}(-2)|_C \to 0$$ $$0\to T_{\PP^2}(-4)\to T_{\PP^2}(-2)\to T_{\PP^2}(-2)|_C\to 0.$$  Since $h^0(T_{\PP^2}(-2)) = h^1(T_{\PP^2}(-4)) = 0$ we have $h^0(T_{\PP^2}(-2)|_C) = 0$, so the map $$H^0(T_{\PP^2}(k-4))\to H^0(T_{\PP^2}(k-2)\otimes \cI_Z)$$ is an isomorphism.  But $$h^0(T_{\PP^2}(k-2)) -4k = h^0(T_{\PP^2}(k-4)),$$ so $Z$ imposes the required number $4k$ of conditions on sections of $T_{\PP^2}(k-2)$.
\end{proof}

The Bridgeland walls in the $(s,t)$-plane are the following five semi-circles $W_x$ with center at $(x,0)$ and radius $\sqrt{x^2-12}$. 

\begin{itemize}
\item The rank one wall $W_{-\frac{13}{2}}$ corresponding to the destabilizing object $\OO_{\PP^2}(-1)$.
\item The rank one wall $W_{-\frac{11}{2}}$ corresponding to the destabilizing object $\cI_p(-1)$.
\item The rank one wall $W_{-\frac{9}{2}}$ corresponding to the destabilizing object $\cI_{Z'}(-1)$, where $Z'$ is a zero-dimensional scheme of length two.
\item The rank one wall $W_{-4}$ corresponding to the destabilizing object $\OO_{\PP^2}(-2)$.
\item The rank coinciding rank one walls $W_{-\frac{7}{2}}$ corresponding to the destabilizing objects $\OO_{\PP^2}(-3)$, $\cI_p(-2)$,  and $\cI_{Z'}(-1)$, where $Z'$ is a zero-dimensional scheme of length three.
\end{itemize}

\begin{proof}
Let $\cI_Z$ be the ideal sheaf of a scheme of length six. The rank one walls with $k=1$ have centers $x= -\frac{13}{2}, -\frac{11}{2}, -\frac{9}{2}, -\frac{7}{2}$ corresponding to schemes of length six that have a subscheme of length six, five, four or three supported along a line, respectively. The rank one walls with $k=2$ have centers $x=-4, -\frac{7}{2}$ corresponding to schemes of length six that have a subscheme of length six or five supported along a conic, respectively. Finally, the rank one wall with $k=3$ has center $x=-\frac{7}{2}$. Since every scheme of length six is contained in a cubic curve, there exists a non-zero map $\OO_{\PP^2}(-3) \rightarrow \cI_Z$. Hence, every ideal sheaf is destabilized at the wall $W_{-7/2}$. By Observation \ref{rank-1}, there are no other rank one walls with $x< - \frac{7}{2}$. 

By inequality (\ref{main}), the centers of a potential rank 2, 3 or 4 walls have to satisfy the inequality $x^2 \leq \frac{54}{4}, x^2 \leq \frac{50}{4}, x^2 \leq \frac{49}{4}$. By Observation \ref{numerics}, we conclude that any potential wall defined by a destabilizing sheaf  of rank $r \geq 5$ has to be contained in the semicircle defined by $W_{-\frac{7}{2}}$. Similarly, any potential wall defined by a destabilizing sheaf of rank 4 either coincides with $W_{-\frac{7}{2}}$ or is strictly contained in the semicircle defined by $W_{-\frac{7}{2}}$.

We now have to analyze the rank 2 and 3 walls. For rank 2 walls, by inequality (\ref{main}), we have that 
$2(x + \sqrt{x^2 -12}) \leq d(\FF) \leq x - \sqrt{x^2 -12}$. We may assume that $$\frac{49}{4} < x^2 \leq \frac{54}{4}.$$ Hence, $-6 < d(\FF) < -4$. We conclude that any destabilizing rank 2 sheaf has degree $d(\FF) = -5$. Now we can bound the second Chern character of $\FF$. We have that the center of the semicircle is given by $$- \frac{7}{2} > x= \frac{ch_2(\FF) +12}{-5} \geq - \frac{3 \sqrt{6}}{2}.$$ Since $\ch_2(\FF)$ is a half integer, we conclude that $ch_2(\FF) = 6$. In other words, $\FF$ is a coherent sheaf of rank $2$ with $c_1(\FF) = -5$ and $\ch_2(\FF) = 6$.  This is not actually possible, since then $c_2(\FF)$ would not be an integer.  Thus there are no rank 2 walls.

Next, we work out the rank three walls. By inequality (\ref{main}), we have that $3 (x + \sqrt{x^2 -12}) \leq d(\FF) \leq 2(x - \sqrt{x^2 -12})$. We may assume that $$\frac{49}{4} < x^2 \leq \frac{50}{4}.$$ Hence, we conclude that $-9 < d(\FF) < -8$. Since the degree is an integer, we conclude that the only potential rank three walls either coincide with $W_{-\frac{7}{2}}$ or are strictly contained in the semicircle defined by $W_{-\frac{7}{2}}$.
\end{proof}

\subsection{The walls for $\PP^{2[7]}$}
The stable base locus decomposition of  $\PP^{2[7]}$ is as follows.

\begin{center}
\begin{longtable}{cccc}
\toprule 
Divisor class & Divisor description & Dual curves & Stable base locus\\\midrule
\endfirsthead
\multicolumn{4}{l}{{\small \it continued from previous page}}\\
\toprule
Divisor class & Divisor description & Dual curves & Stable base locus\\\midrule \endhead
\bottomrule \multicolumn{4}{r}{{\small \it continued on next page}} \\ \endfoot
\bottomrule
\endlastfoot
$B$ & $B$ & $C_1(7)$ &\\ &&&$B$\\
$H$ & $H$ & $C(7)$ &\\ &&&$\emptyset$\\
$H-\frac{1}{12}B$ & $D_6(7)$ &  $C_7(7)$\\&&&$L_7(7)$\\
$H-\frac{1}{10}B$ & $D_5(7)$ &   $C_6(7)$\\&&&$L_6(7)$\\
$H-\frac{1}{8}B$ & $D_4(7)$ &  $C_5(7)$\\&&& $L_5(7)$\\
$H-\frac{1}{6}B$ & $D_3(7)$ &   $C_4(7),A_{2,7}(7)$\\&&& $L_4(7) \cup Q_{7}(7)$\\
$H-\frac{1}{5}B$ & $D_{T_{\PP^2}(1)}(7)$  &  $A_{2,6}(7)$\\&&& $Q_6(7)=E_2(7)$\\
$H-\frac{5}{24}B$ & $E_2(7)$ 
\end{longtable}
\end{center}
\begin{proof}
All chambers besides the chamber spanned by $H-\frac{1}{6}B$ and $H-\frac{1}{5}B$ are completely analogous to previous cases.  In this chamber, we need only show that if $Z\in Q_6(7)\setminus (L_4(7)\cup Q_7(7))$ then $Z$ imposes independent conditions on sections of $T_{\PP^2}(1)$.  This follows easily from Lemma \ref{tangent}.
\end{proof}

The case $n=7$ is interesting because we see the first example of a collapsing rank two wall. 
The Bridgeland walls in the $(s,t)$-plane are the following six semi-circles $W_x$ with center at $(x,0)$ and radius $\sqrt{x^2-14}$. 

\begin{itemize}
\item The rank one wall $W_{- \frac{15}{2}}$ corresponding to the destabilizing object $\OO_{\PP^2}(-1)$.
\item The rank one wall $W_{-\frac{13}{2}}$ corresponding to the destabilizing object $\cI_{p}(-1)$.
\item  The rank one wall $W_{-\frac{11}{2}}$ corresponding to the destabilizing object $\cI_{Z'}(-1),$ where $Z'$ is a scheme of length two.
\item The two coinciding rank one walls $W_{- \frac{9}{2}}$ corresponding to the destabilizing objects $\OO_{\PP^2}(-2)$ and $\cI_{Z'}(-1),$ where $Z'$ is a scheme of length 3.
\item The rank one wall $W_{-4}$ corresponding to the destabilizing object $\cI_{p}(-2)$.
\item The rank two wall $W_{- \frac{39}{10}}$ corresponding to the destabilizing object $T_{\PP^2}(-4)$.
\end{itemize}

\begin{proof}
By Gaeta's Theorem \cite{eisenbud}, the syzygies for a general zero dimensional scheme of length seven is given by 
$$\OO_{\PP^2}(-5) \oplus \OO_{\PP^2}(-4) \rightarrow \OO_{\PP^2}(-3)^{\oplus 3} \rightarrow \cI_Z \rightarrow 0.$$ By the twisting the Euler sequence by $\OO_{\PP^2}(-4)$, we obtain the short exact sequence of sheaves $$0 \rightarrow \OO_{\PP^2}(-4) \rightarrow \OO_{\PP^2}(-3)^{\oplus 3} \rightarrow T_{\PP^2}(-4).$$ Hence, $\cI_Z$ fits in a short exact sequence $$0 \rightarrow T_{\PP^2}(-4) \rightarrow \cI_Z \rightarrow \OO_{\PP^2}(-5)[1] \rightarrow 0$$ in the category.
Since $c_1(T_{\PP^2}(-4)) = - 5 L $ and $\ch_2(T_{\PP^2}(-4)) = \frac{11}{2}$, the sheaf $T_{\PP^2}(-4)$ gives rise to the wall $W_{- \frac{39}{10}}$. We conclude that the wall $W_{- \frac{39}{10}}$ is a collapsing wall in the sense that all the ideal sheaves $\cI_Z$ are destabilized by the time we reach $W_{- \frac{39}{10}}$.

Let $\cI_Z$ be the ideal sheaf of a scheme of length seven. The rank one walls with $k=1$ have centers $x= -\frac{15}{2}, -\frac{13}{2}, -\frac{11}{2}, -\frac{9}{2}$ corresponding to subschemes of length seven that have a collinear subscheme of length seven, six, five and four, respectively. The rank one walls with $k=2$ have centers $x= -\frac{9}{2}, -4$ corresponding to schemes of length seven that have a subscheme of length seven and six, respectively, supported along a conic. Finally, a potential rank one wall with $k=3$ has center $x= -\frac{23}{6}$. However, $- \frac{39}{10} < - \frac{23}{6}$. Hence, the potential wall $W_{- \frac{9}{2}}$ is strictly contained in $W_{- \frac{39}{10}}$ and all the ideal sheaves are destabilized before reaching the wall $W_{- \frac{9}{2}}$. By Observation \ref{rank-1}, these are the only rank one walls with $x< -\frac{39}{10}$.

By Inequality (\ref{main}), the centers of the potential rank 2 and rank 3 walls have to satisfy $$x^2 \leq \frac{63}{4} \ \ \mbox{and} \ \ x^2 \leq \frac{175}{12},$$ respectively. Since $\frac{175}{12} < (\frac{39}{10})^2$, by observation \ref{numerics}, we conclude that any potential wall of rank three or higher is contained in the semicircle defined by $W_{-\frac{39}{10}}$. 

We now have to analyze the rank 2 walls. We have the inequality $$2(x + \sqrt{x^2 -14}) \leq d(\FF) \leq x - \sqrt{x^2 -14}.$$ Since $(\frac{39}{10})^2 < x^2 \leq \frac{63}{4}$, we conclude that $-28/5< d(\FF) < -5$.  Since $d(\FF)$ is an integer, no such walls are possible.
\end{proof}

\subsection{The walls for $\PP^{2[8]}$}
The stable base locus decomposition of  $\PP^{2[8]}$.

\begin{center}
\begin{longtable}{cccc}
\toprule 
Divisor class & Divisor description & Dual curves & Stable base locus\\\midrule
\endfirsthead
\multicolumn{4}{l}{{\small \it continued from previous page}}\\
\toprule
Divisor class & Divisor description & Dual curves & Stable base locus\\\midrule \endhead
\bottomrule \multicolumn{4}{r}{{\small \it continued on next page}} \\ \endfoot
\bottomrule
\endlastfoot
$B$ & $B$ & $C_1(8)$ &\\ &&&$B$\\
$H$ & $H$ & $C(8)$ &\\ &&&$\emptyset$\\
$H-\frac{1}{14}B$ & $D_7(8)$ &  $C_8(8)$\\&&&$L_8(8)$\\
$H-\frac{1}{12}B$ & $D_6(8)$ &   $C_7(8)$\\&&&$L_7(8)$\\
$H-\frac{1}{10}B$ & $D_5(8)$ &  $C_6(8)$\\&&& $L_6(8)$\\
$H-\frac{1}{8}B$ & $D_4(8)$ &   $C_5(8)$\\&&& $L_5(8)$\\
$H-\frac{1}{7}B$ & $D_{T_{\PP^2}(2)}(8)$ &  $A_{2,8}(8)$\\&&& $L_5(8)\cup Q_8(8)$\\
$H-\frac{1}{6}B$ & $D_3(8)$ & $C_4(8),A_{2,7}(8)$\\ &&& $L_4(8)\cup Q_7(8) $\\
$H-\frac{3}{16}B$ & $D_{\mathrm{coker}(\OO(1)^2\to \OO(2)^5)}(8)$ & 
\end{longtable}
\end{center}
\begin{proof}
The last three chambers are interesting here.  To get the easiest one out of the way, consider the chamber spanned by $H-\frac{1}{7}B$ and $H-\frac{1}{6}B$.  Since $C_5(8)$ is dual to $H-\frac{1}{8}B$, every divisor $H-\alpha B$ with $\alpha > \frac{1}{8}$ contains $L_5(8)$ in its stable base locus.  Since $A_{2,8}(8)$ is dual to $H-\frac{1}{7}B$, the stable base locus in the chamber spanned by $H-\frac{1}{7}$ and $H-\frac{1}{6}B$ contains $L_5(8)\cup Q_8(8)$.  But Proposition \ref{DkBase} (b) shows that the base locus of $D_3(8)$ is actually contained in $L_5(8)\cup Q_8(8)$, so we conclude $L_5(8)\cup Q_8(8)$ is the stable base locus in this chamber.

For the final chamber in the cone, spanned by $H-\frac{1}{6}B$ and $H-\frac{3}{16}B$, first observe that since the curves $C_4(8)$ and $A_{2,7}(8)$ are dual to $H-\frac{1}{6}B$ the stable base locus is contained in $L_4(8)\cup Q_7(8)$. For the other containment, we make use of another description of a divisor spanning the edge of the effective cone.  A general collection of $8$ points lies on a unique pencil of cubic curves, and so determines a $9$th point given as the final base point of this pencil.  Fixing a line $\ell$ in $\PP^2$, we obtain a divisor in $\PP^{2[8]}$ described as the locus where the $9$th point lies on $\ell$.  An elementary calculation shows this divisor spans the ray $H-\frac{3}{16}B$.  

Since varying the line $\ell$ does not change the class of the divisor, the stable base locus of the class $H-\frac{3}{16}B$ lies in the locus of schemes $Z$ of length $8$ which either fail to impose independent conditions on cubics or such that the base locus of the pencil containing $Z$ is positive dimensional.  In the former case, we saw in Lemma \ref{2dplus1lemma} (b) that $Z$ lies in $L_5(8)\cup Q_8(8)$, which is contained in $L_4(8)\cup Q_7(8)$.  On the other hand, suppose $Z$ imposes independent conditions on cubics but that the base locus of the pencil containing $Z$ is positive dimensional, with a reduced, irreducible curve $C$ in its base locus. Since the restriction map $$H^0(\OO_{\PP^2}(3))\to H^0(\OO_C(3))$$ is surjective, this is only possible if $Z\cap C$ has length at least $h^0(\OO_C(3))$.  As $C$ could be a line or a conic, this means $Z$ lies in $L_4(8)\cup Q_7(8)$.

For the chamber spanned by $H-\frac{1}{8}B$ and $H-\frac{1}{7}B$,  it suffices to show that if $Z\in Q_8(8)\setminus L_5(8)$ then $Z$ imposes independent conditions on sections of $T_{\PP^2}(2)$; this follows from Lemma \ref{tangent}.
\end{proof}

The Bridgeland walls in the $(s,t)$-plane are the following six semi-circles $W_x$ with center at $(x,0)$ and radius $\sqrt{x^2-16}$. 

\begin{itemize}
\item The rank one wall $W_{- \frac{17}{2}}$ corresponding to the destabilizing object $\OO_{\PP^2}(-1)$.
\item The rank one wall $W_{-\frac{15}{2}}$ corresponding to the destabilizing object $\cI_{p}(-1)$.
\item  The rank one wall $W_{-\frac{13}{2}}$ corresponding to the destabilizing object $\cI_{Z'}(-1),$ where $Z'$ is a scheme of length two.
 \item  The rank one wall $W_{-\frac{11}{2}}$ corresponding to the destabilizing object $\cI_{Z'}(-1),$ where $Z'$ is a scheme of length three.
 \item The rank one wall $W_{-5}$ corresponding to the destabilizing object $\OO_{\PP^2}(-2)$
\item The coinciding rank one walls $W_{- \frac{9}{2}}$ corresponding to the destabilizing objects $\cI_p(-2)$ and $\cI_{Z'}(-1),$ where $Z'$ is a scheme of length four.
\item The rank one wall $W_{-\frac{25}{6}}$ corresponding to the destabilizing object $\OO_{\PP^2}(-3)$. This wall coincides with the rank two wall corresponding to the destabilizing object $\OO_{\PP^2}(-3)^{\oplus 2}$.
\end{itemize}

\begin{proof}

Let $\cI_Z$ be the ideal sheaf of a scheme of length eight.  The rank one walls with $k=1$ have centers $x= -\frac{17}{2}, -\frac{15}{2}, -\frac{13}{2}, -\frac{11}{2}, -\frac{9}{2}$ corresponding to subschemes of length eight that have a collinear subscheme of length eight, seven, six, five and four, respectively. The rank one walls with $k=2$ have centers $x= -5, -\frac{9}{2}$ corresponding to schemes of length eight that have a subscheme of length eight and  seven, respectively, supported along a conic. Finally, the rank one wall with $k=3$ has center $x= -\frac{25}{6}$. Since every scheme of length seven is contained in a curve of degree three, we have a non-zero map $\OO_{\PP^2}(-3) \rightarrow \cI_Z$. Hence, all the ideal sheaves $\cI_Z$ are destabilized at the wall $W_{-\frac{25}{6}}$. As in the previous cases, these are the only potential rank one Bridgeland walls  with $x < - \frac{25}{6}$.

By inequality (\ref{main}), the centers of the potential rank 2 and rank 3 walls have to satisfy $$x^2 \leq 18 \ \ \mbox{and} \ \ x^2 \leq \frac{50}{3},$$ respectively. Since $\frac{50}{3} < (\frac{25}{6})^2$, by observation \ref{numerics}, we conclude that any potential walls defined by sheaves of rank three or higher are contained in the semicircle defined by $W_{-\frac{25}{6}}$. We now analyze the potential rank 2  walls more closely.  For rank 2 walls we have the inequality 
$$2(x + \sqrt{x^2 -16}) \leq d(\FF) \leq x - \sqrt{x^2 -16}.$$ Since $\frac{625}{36} < x^2 \leq \frac{49}{3}$, we conclude that $-6 < d(\FF) < - 32/6$. Since $d(\FF)$ is an integer, we conclude that the potential rank 2 walls either are contained in the semicircle defined by $W_{-\frac{25}{6}}$ or coincide with it. In fact, when a scheme of length 8 is contained in the complete intersection of two cubics, we get a non-zero map $\OO_{\PP^2}(-3) \oplus \OO_{\PP^2}(-3) \rightarrow \cI_Z$ destabilizing the ideal sheaf. The corresponding semicircle coincides with $W_{-25/6}$.
\end{proof}

\subsection{The walls for $\PP^{2[9]}$}
 The stable base locus decomposition of  $\PP^{2[9]}$.

\begin{center}
\begin{longtable}{cccc}
\toprule 
Divisor class & Divisor description & Dual curves & Stable base locus\\\midrule
\endfirsthead
\multicolumn{4}{l}{{\small \it continued from previous page}}\\
\toprule
Divisor class & Divisor description & Dual curves & Stable base locus\\\midrule \endhead
\bottomrule \multicolumn{4}{r}{{\small \it continued on next page}} \\ \endfoot
\bottomrule
\endlastfoot
$B$ & $B$ & $C_1(9)$ &\\ &&&$B$\\
$H$ & $H$ & $C(9)$ &\\ &&&$\emptyset$\\
$H-\frac{1}{16}B$ & $D_8(9)$ & $C_9(9)$\\ &&& $L_9(9)$\\
$H-\frac{1}{14}B$ & $D_7(9)$ &  $C_8(9)$\\&&&$L_8(9)$\\
$H-\frac{1}{12}B$ & $D_6(9)$ &   $C_7(9)$\\&&&$L_7(9)$\\
$H-\frac{1}{10}B$ & $D_5(9)$ &  $C_6(9)$\\&&& $L_6(9)$\\
$H-\frac{1}{8}B$ & $D_4(9)$ &   $C_5(9),A_{2,9}(9)$\\&&&$L_5(9)\cup Q_9(9)$\\
$H-\frac{1}{7}B$ & $D_{T_{\PP^2}(2)}(9)$ &  $A_{2,8}(9)$\\&&& $L_5(9) \cup Q_8(9)$\\
$H-\frac{1}{6}B$ & $D_3(9)$ 
\end{longtable}
\end{center}
\begin{proof}
The only part which is not routine at this point is to determine the stable base locus in the final chamber, spanned by $H-\frac{1}{7}B$ and $H-\frac{1}{6}B$.  It clearly contains $L_5(9)\cup Q_8(9)$ and is contained in the locus of schemes of degree $9$ which fail to impose independent conditions on curves of degree $3$.  One checks that if a scheme $Z$ of degree $9$ fails to impose independent conditions on curves of degree $3$ but does not lie in $L_5(9)\cup Q_8(9)$ then it is a complete intersection of two cubics.  We must show that if a scheme $Z$ is a complete intersection of two cubics then it does not lie in the stable base locus of this chamber.

To do this, we need to introduce some more effective divisors on $\PP^{2[9]}$.  Consider a vector bundle $E_k$ given by a general resolution $$0\to \OO_{\PP^2}(1)^{k+1}\to \OO_{\PP^2}(2)^{2k+1}\to E_k\to 0.$$ Then $h^0(E_{k}) = 9k+3$.  We will see soon that $E_k$ satisfies interpolation for $9$ points.  Thus the divisor $D_{E_k}(9)$ will have class spanning the ray $H - \frac{k}{6k+2}B$.  As $k\to \infty$ this ray tends to $H-\frac{1}{6}B$, so by Proposition \ref{interpClass} it suffices to show that if $Z$ is a complete intersection of cubics then it imposes independent conditions on sections of $E_k$.

If $Z$ is a complete intersection of two cubics then its ideal sheaf admits a resolution $$0\to \OO_{\PP^2}(-6)\to \OO_{\PP^2}(-3)^{\oplus 2}\to \cI_Z\to 0,$$ so we have an exact sequence $$0\to E_k(-6)\to E_k(-3)^{\oplus 2}\to E_k\otimes \cI_Z\to 0.$$ It is straightforward to check from the defining sequence for $E_{k}$ that $H^0(E_k(-3)) = H^1(E_k(-3)) = 0$, so $H^0(E_k\otimes \cI_Z)\cong H^1(E_k(-6)).$ We can also check that $h^1(E_{k}(-6))=3$, so $Z$ imposes the full $9k$ independent conditions on sections of $E_k$.
\end{proof}

The Bridgeland walls in the $(s,t)$-plane are the following seven semi-circles $W_x$ with center at $(x,0)$ and radius $\sqrt{x^2-18}$. 

\begin{itemize}
\item The rank one wall $W_{- \frac{19}{2}}$ corresponding to the destabilizing object $\OO_{\PP^2}(-1)$.
\item The rank one wall $W_{-\frac{17}{2}}$ corresponding to the destabilizing object $\cI_{p}(-1)$.
\item  The rank one wall $W_{-\frac{15}{2}}$ corresponding to the destabilizing object $\cI_{Z'}(-1),$ where $Z'$ is a scheme of length two.
 \item  The rank one wall $W_{-\frac{13}{2}}$ corresponding to the destabilizing object $\cI_{Z'}(-1),$ where $Z'$ is a scheme of length three.
\item The coinciding rank one walls $W_{- \frac{11}{2}}$ corresponding to the destabilizing objects $\OO_{\PP^2}(-2)$ and $\cI_{Z'}(-1),$ where $Z'$ is a scheme of length four.
 \item The rank one wall $W_{-5}$ corresponding to the destabilizing object $\cI_p(-2)$

\item The coinciding rank one walls $W_{-\frac{9}{2}}$ corresponding to the destabilizing objects $\OO_{\PP^2}(-3)$, $\cI_{Z'}(-2)$, where $Z'$ has length two, and $\cI_{Z''}(-1)$, where $Z''$ has degree five.
\end{itemize}

\begin{proof}
Let $\cI_Z$ be the ideal sheaf of a scheme of length nine. The rank one walls with $k=1$ have centers $x=-\frac{19}{2}, -\frac{17}{2}, -\frac{15}{2}, -\frac{13}{2}, -\frac{11}{2}, -\frac{9}{2}$ corresponding to schemes of length nine that have collinear subschemes of length nine, eight, seven, six, five and four, respectively. The rank one walls with $k=2$ have centers $x= -\frac{11}{2}, -5, -\frac{9}{2}$ corresponding to schemes of length nine that have a subscheme of length nine, eight and seven, respectively, supported along a conic. Finally, the rank one wall with $k=3$ has center $x= - \frac{9}{2}$. Since every scheme of length nine is supported along a cubic curve, there is a non-zero map $\OO_{\PP^2}(-3) \rightarrow \cI_Z$. Hence, every ideal sheaf $\cI_Z$ is destabilized at the wall $W_{-\frac{9}{2}}$. By Observation \ref{rank-1}, these are all the rank one walls with $x < - \frac{9}{2}$. 

By inequality (\ref{main}), the centers of potential rank 2 walls have to satisfy the inequality $x^2 \leq 81/4$. Since $81/4 = (-9/2)^2$, by Observation \ref{rank-1}, we conclude that any wall defined by a higher rank sheaf either is contained in the semicircle defined by $W_{-\frac{9}{2}}$ or coincides with $W_{-\frac{9}{2}}$. If $Z$ is a complete intersection of two cubics, then $\cI_Z$ admits a map $\OO_{\PP^2}(-3) \oplus \OO_{\PP^2}(-3) \rightarrow \cI_Z$. Hence, the wall $W_{-\frac{9}{2}}$ also occurs as a rank two wall corresponding to $\OO_{\PP^2}(-3)^{\oplus 2}$. 
\end{proof}

\bigskip

\end{document}